\documentclass[11pt]{amsart} 
\usepackage[utf8]{inputenc}
\usepackage{amsmath}
\usepackage{amssymb}
\usepackage{amsthm}
\usepackage[top=1in, bottom=1in, left=1in, right=1in]{geometry}
\usepackage{xcolor}
\usepackage{hyperref}
\usepackage{graphicx}
\usepackage{mathtools}
\usepackage[all]{xy}
\usepackage[dvipsnames]{xcolor}
\usepackage{tikz-cd}
\usepackage{quiver}
\usepackage{spectralsequences}
\usepackage{biblatex}
\usepackage{ytableau}
\usepackage{tabularx}
\usepackage{booktabs}
\usepackage[labelfont=bf,format=plain,justification=raggedright,singlelinecheck=false]{caption}

\bibliography{Gilbert_Sources} 

\newtheorem{theorem}{Theorem}[section]
\newtheorem{lemma}[theorem]{Lemma}
\newtheorem{prop}[theorem]{Proposition}
\newtheorem{remark}[theorem]{Remark}
\newtheorem{definition}{Definition}[section]

\newtheorem{conjecture}{Conjecture}

\newtheorem{coro}{Corollary}[section]
\newtheorem*{theorem*}{Theorem}
\newtheorem*{prop*}{Proposition}
\newtheorem*{lemma*}{Lemma}
\newtheorem*{coro*}{Corollary}
\newtheorem*{definition*}{Definition}
\newtheorem*{conjecture*}{Conjecture}
\newtheorem*{claim*}{Claim}
\newtheorem{example}{Example}

\newcommand{\aff}{\mathbb{A}}
\newcommand{\GG}{\mathbb{G}}
\newcommand{\N}{\mathbb{N}}
\newcommand{\Z}{\mathbb{Z}}
\newcommand{\Q}{\mathbb{Q}}
\newcommand{\R}{\mathbb{R}}
\newcommand{\C}{\mathbb{C}}
\newcommand{\p}{\mathbb{P}}

\newcommand{\ff}{\mathfrak{f}}
\newcommand{\RR}{\mathfrak{R}}

\newcommand{\XX}{\mathfrak{X}}
\newcommand{\zz}{\mathfrak{z}}

\newcommand{\Acal}{\mathcal{A}}
\newcommand{\BB}{\mathcal{B}}
\newcommand{\CC}{\mathcal{C}}
\newcommand{\DD}{\mathcal{D}}

\newcommand{\II}{\mathcal{I}}

\newcommand{\Ocal}{\mathcal{O}}

\newcommand{\eex}{\mathcal{X}}
\newcommand{\eey}{\mathcal{Y}}

\newcommand{\Ext}{\operatorname{Ext}}
\newcommand{\Hilb}{\mathcal{H}ilb}

\newcommand{\im}{\operatorname{im}}
\newcommand{\Ch}{\widetilde{\operatorname{Ch}}}
\newcommand{\Td}{\widetilde{\operatorname{Td}}}
\newcommand{\Rep}{\operatorname{Rep}}
\newcommand{\Irr}{\operatorname{Irr}}
\newcommand{\Span}{\operatorname{Span}}
\newcommand{\Hilbs}{\operatorname{Hilb}}
\newcommand{\ext}{\operatorname{ext}}
\newcommand{\Coh}{\operatorname{Coh}}

\newcommand{\Spec}{\operatorname{Spec}}
\newcommand{\modn}{\text{ (mod }n)}
\newcommand{\fgMod}{\operatorname{f.g.Mod}}
\newcommand{\Fix}{\operatorname{Fix}}

\newcommand{\innprod}[2]{\langle #1, #2 \rangle}
\newcommand{\Orb}{\operatorname{Orb}}

\newcommand{\rhostd}{\rho_{\operatorname{std}}}
\newcommand{\eexgr}{\eex_{\operatorname{rig}}}

\title{\large $G$-Equivariant Hilbert Schemes and Quadratic Forms}
\author{Samuel ``Jay" Bachhuber, Amin Gholampour}
\date{}

\begin{document}
\begin{abstract}
  Let $G$ be a finite abelian group acting linearly on $\aff^2$ 
  and $\eex = [\aff^2/G]$ denote the associated quotient stack.  
  Following the work of Olsson and Starr in \cite{Olsson_Starr_2008},
  for any $\rho \in \Rep(G)$ we will analyze a fine moduli space 
  $\eex^{[\rho]}$ which parameterizes zero dimensional 
  substacks $\zz \subset \eex$ such that if $\xi$ is the $G$-equivariant 
  subscheme of $\aff^2$ corresponding to $\zz$ then 
  $H^0(\Ocal_{\xi}) \simeq \rho$ as representations. When $G$ is an abelian 
  group, by means of a Grothendieck-Riemann-Roch theorem of To\"en 
  we give explicit representation theoretic formulas of $\dim(\eex^{[\rho]})$ 
  similar to the McKay correspondence. This is used to give a classification 
  of all such $\eex$ according to a quadratic form associated to 
  $\eex$ called the Euler form. For $[\aff^2/\mu_n]$ given by 
  the cyclic action $\zeta \cdot (x,y) = (\zeta x, \zeta^{-1} y)$, 
  we give an interpretation of the Euler form in terms of border strips 
  which may be removed by a diagram, an equational characterization of 
  $n$-core partitions, and the number of significant arrows of a monomial ideal 
  as defined in \cite{Maclagan_Smith_2010}.
\end{abstract}

\maketitle
\tableofcontents

\section*{Notation and Conventions}

Some notation which will be observed throughout this 
paper is as follows:

$\bullet$ All schemes and stacks will have $\C$ as their field of definition.

$\bullet$ 
$G$ will refer to a finite abelian group acting linearly on $\aff^2$. The 
notation $G$-$...$ will be a shorthand for $G$-equivariant. $\RR = \C[G]$ 
denotes the regular representation of $G$. We will identify $\Z/(n)$
with $\mu_n$, the group of $n$-th roots of unity, by fixing a choice of 
primitive root of unity $\zeta$ and using the association 
$i \leftrightarrow \zeta^i$. The $(n; a,b)$ action will refer to the 
action of $G = \mu_n$ given by $\zeta \cdot (x,y) = (\zeta^a x, \zeta^b y)$ 
for all $(x,y) \in \aff^2$.

$\bullet$ 
For $\rho_1, \rho_2 \in \Rep(G)$, the expression 
$\rho_1 + \rho_2$ will denote $\rho_1 \oplus \rho_2$ and 
$a \rho_1$ denotes $\rho_1^{\oplus a}$, and $- \rho$ denotes 
the formal inverse of addition of representations so that 
$\rho + (-\rho) = 0$.

$\bullet$ 
As $G$ is abelian, its irreducible representations are all one dimensional. 
As such $\Irr(G)$ will be identified with the set of characters of $G$ so 
that in particular $\Irr(G) \simeq G^*$. By fixing a choice of root of 
unity we may also form a isomorphism $\chi: G \simeq \Irr(G)$ 
which will be referred to as a \textbf{character isomorphism}. When 
we wish to emphasize the representation we will use $\chi(g)$,
and if instead we wish to emphasize the character then we will use 
$\chi_g$. In the event that $G = \mu_n$ and $\zeta$ is the given choice 
this will amount to letting $\chi(i)$ correspond to the character 
$\chi_i: G \rightarrow \C^*$ given by $\chi_i(j) = \zeta^{ij}$.

$\bullet$ 
Let $R$ be commutative ring and $G$ act on $R$ by ring automorphisms. 
$\fgMod_{R}^G$ denotes the category of finitely generated $R$-modules 
with an action of $G$ that is compatible with the module structure.

$\bullet$ 
A \textbf{planar abelian stack} is one of the form $\eex = [\aff^2/G]$ where 
$G$ acts linearly on $\aff^2$. For a $G$-subscheme $\xi \subset \aff^2$ (resp. 
$\zz \subset \eex$ a substack) define $H^0(\xi) = H^0(\xi, \Ocal_{\xi})$ 
(resp. $H^0(\zz) = H^0(\zz,\Ocal_{\zz}))$ and $h^0(\xi) = \dim_{\C}(H^0(\xi))$ 
(resp. $h^0(\zz) = \dim(H^0(\zz))$). 

$\bullet$
We will abuse notation and 
use $p \in \eex$ to denote a geometric point $p = \Spec(\C) \rightarrow \eex$,
and $\zz \subset \eex$ (resp. $\mathfrak{U} \subset \eex$) to denote a 
representable closed (resp. open) immersion $\zz \rightarrow \eex$ 
(resp. $\mathfrak{U} \rightarrow \eex$).
$\Delta \subset \eex$ will denote the open substack of $\eex$ 
which consists of geometric points with trivial stabilizer.

\section{Introduction}

The $G$-equivariant geometry of zero dimensional subschemes is a subject which 
has been studied extensively; See 
\cite{BejleriZaimi2023,CastroRoss2019,guseinzade2010generating,Haiman_Sturmfels_2002,Johnson2013Topology,Maclagan_Smith_2010}.
In this paper we will investigate these Hilbert schemes by means of the 
Euler form of the stack, a quadratic form which measures the growth rates 
of the dimensions of $G$-equivariant Hilbert schemes. Similar to the McKay 
correspondence when the group acts on $\aff^2$ by an $SL(2)$ action we will use 
the Euler form to construct a weighted graph and give a classification of 
abelian planar stacks. 

Let $\eex = [\aff^2/G]$ be an abelian planar stack. Following the work of 
Olsson and Starr in \cite{Olsson_Starr_2008}, for any $\rho \in \Rep(G)$ it is 
possible to associate to $\eex$ a quasiprojective scheme $\eex^{[\rho]}$ which 
acts as the fine moduli space parameterizing zero dimensional substacks 
$\zz \subset \eex$ such that if $\xi$ is the $G$-equivariant subscheme 
corresponding to $\zz$ then $H^0(\xi) \simeq \rho$ as representations. 

Section 2 collects various results which form the technical basis 
of the analysis through the entirety of the paper. Notably the 
main objects of study are defined and various reductions to aid in their 
study are proven.

Section 3 makes use of a Grothendieck-Riemann-Roch theorem for 
tame Deligne-Mumford (= DM) stacks to be able to calculate a 
quadratic form which may be associated to $\eex$, which will be 
referred to as the Euler form. Fix $\rho \in \Rep(G)$ and form its 
isotypic decomposition 
\begin{equation*}
  \rho \simeq \sum_{g \in G} a_g \chi(g)
\end{equation*}
where $a_g \in \N$. After diagonalizing the standard representation of $G$
define $g_x,g_y \in G$ by the isomorphisms
$\Span(x) \simeq \chi(g_x), \Span(y) \simeq \chi(g_y)$ for 
$x,y \in \C[x,y]$ and define the canonical element of the action to be 
$c = g_x + g_y$. The Euler form, denoted $Q_{\eex}(\{a_g\})$, is a quadratic 
form in the coefficients of the isotypic decomposition and is used to give 
formulas for the dimensions of tangent spaces of $\eex^{[\rho]}$.
We will prove that if $\zz \in \eex^{[\rho]}$ then 
\begin{equation}\label{DimEq}
  \dim(T_{\zz} \eex^{[\rho]}) = a_0 + a_{-c} - Q_{\eex}(\{a_g\})
\end{equation}
In particular as the right hand side of the equation depends only on 
$\rho$ this implies that the tangent spaces of $\eex^{[\rho]}$ have a 
constant dimension whence it is always smooth, a result consistent with 
Theorem 1.1 of \cite{Maclagan_Smith_2010}.

Section 4 consists giving explicit calculations of $Q_{\eex}$. More precisely 
let $G$ be a finite abelian group acting on $\aff^2$
such that the stabilizer group of the generic point is trivial. 
Fix $\rho \in \Rep(G)$ and let $a_g = \innprod{\rho}{\chi_g}$ for 
all $g \in G$ so that in particular $\rho \simeq \sum_g a_g \chi(g)$. Finally 
set $\rhostd = \chi_{g_x} + \chi_{g_y}$,  and let $D = (G^2 - \Delta)/S_2$
where $S_2$ acts by a transposition of the coordinates. We will prove Theorem 
\ref{EulerFormCoeffs} which states
\begin{theorem*}
  \begin{equation*}
    Q_{\eex}(\{a_g\}_{g \in G}) = \sum_{(g,h) \in D} b_{g,h}(a_g - a_h)^2
  \end{equation*} 
  where $b_{g,h} = \frac{1}{2} \innprod{\rhostd - \chi_c}{\chi_{g-h} + \chi_{h-g}}$
\end{theorem*}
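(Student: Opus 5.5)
The plan is to start from the description of $Q_{\eex}$ obtained in Section 3 via To\"en's Grothendieck--Riemann--Roch, namely as (minus) an Euler pairing. Concretely, Section 3 expresses $Q_{\eex}(\{a_g\})$ through the $G$-invariant part of $\rho\otimes\rho^{\vee}\otimes\Lambda_{-1}(V)$, where $V$ is the standard representation. Since $V\simeq\chi(g_x)+\chi(g_y)$, we have $\Lambda_{-1}(V)=1-\rhostd+\chi_c$. Equivalently, it comes from the $G$-equivariant Koszul resolution of a zero-dimensional $G$-module with character $\rho$. I will fix the normalization so that this pairing matches the term $-Q_{\eex}$ in (\ref{DimEq}), and I will track the signs and the dualization $\chi_k\leftrightarrow\chi_{-k}$ carefully.

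Writing $\rho=\sum_g a_g\chi(g)$, the first step is to expand bilinearly:
\begin{equation*}
Q_{\eex}(\{a_g\})=\sum_{g,h\in G}a_g a_h\, m(g-h),\qquad m(k)=\innprod{1-\rhostd+\chi_c}{\chi_{k}}
\end{equation*}
Here the exact form of $m$, possibly with $\chi_{-k}$ in place of $\chi_k$ and an overall sign, is whatever the Section 3 formula dictates. The only input is that $\chi_g\otimes\chi_h^{\vee}=\chi_{g-h}$ and that $\innprod{\chi_k}{1}=\delta_{k,0}$.

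The key observation is that $m$ has total sum zero. Indeed,
\begin{equation*}
\sum_{k\in G}m(k)=\innprod{1-\rhostd+\chi_c}{\RR}=\dim(1-\rhostd+\chi_c)=1-2+1=0,
\end{equation*}
so every row sum $\sum_h m(g-h)$ vanishes. For any symmetric kernel with zero row sums, the standard Laplacian identity gives
\begin{equation*}
\sum_{g,h}a_ga_h\,m(g-h)=-\tfrac12\sum_{g,h}m(g-h)(a_g-a_h)^2 .
\end{equation*}
To verify this, expand $(a_g-a_h)^2$ and use the row sums; the kernel $m(g-h)+m(h-g)$ is symmetric, so symmetrizing first is harmless.

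The diagonal pairs $g=h$ contribute nothing, so only $k=g-h\neq 0$ survives, and for such $k$ the constant term $1$ of $\Lambda_{-1}(V)$ drops out. Grouping the ordered pairs $(g,h)$ and $(h,g)$ into a single element of $D=(G^2-\Delta)/S_2$ then gives the coefficient
\begin{equation*}
-\tfrac12\bigl(m(g-h)+m(h-g)\bigr)=\tfrac12\innprod{\rhostd-\chi_c}{\chi_{g-h}+\chi_{h-g}}=b_{g,h},
\end{equation*}
which is the claimed formula. Because the expression is symmetric in $k\mapsto -k$, any dualization ambiguity in the Section 3 formula disappears at this stage.

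The main obstacle is the first step: pinning down exactly which pairing Section 3 identifies with $Q_{\eex}$. That means confirming the overall sign, which must be consistent with (\ref{DimEq}), and confirming that the contribution of the twisted sectors in To\"en's formula reduces to the $G$-invariant Koszul expression above. Once that identification is in place, everything else is the zero-row-sum computation. The hypothesis that the generic stabilizer is trivial, i.e.\ that $V$ is faithful, is only needed to make the Section 3 formula applicable; it is not used in the algebraic rearrangement.
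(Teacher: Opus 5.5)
Your argument is correct, and it takes a genuinely lighter route than the paper.

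\textbf{The paper's route.} The paper obtains the shape $\sum_{D}b_{g,h}(a_g-a_h)^2$ geometrically, before knowing any coefficient. Smoothness and the tangent-space formula give $t_{\eex}(\rho+\RR)=t_{\eex}(\rho)+2$ by adjoining a free orbit, hence $Q_{\eex}(\rho+\RR)=Q_{\eex}(\rho)$. The $G$-translation symmetry upgrades this to $Q_{\eex}(v+w_G)=Q_{\eex}(v)$, and Lemma \ref{SudheersLemma} then gives the Laplacian form. Only afterwards does it evaluate To\"en's GRR on a compactification, restrict to the isolated-fixed-point sectors, and complete the sum to $\overline{Q}_{\eex}$. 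It then extracts $b_{g,0}$ from $\partial_g\overline{Q}_{\eex}(e_0)=-2b_{g,0}$, propagates via $b_{g'+g,g'+h}=b_{g,h}$, and handles non-rigid actions by a separate rigidification step.

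\textbf{Your route.} In your version these steps collapse. The kernel depends only on $g-h$, so translation invariance is automatic. The zero row sums come from the one-line identity $\dim(1-\rhostd+\chi_c)=0$. Equivalently, $\sum_h[\Ocal_0(h)]$, the $K$-class of a free orbit, vanishes; this is exactly what the paper's free-orbit argument detects geometrically. The Laplacian identity, which is the content of Lemma \ref{SudheersLemma}, then yields every coefficient at once, uniformly for rigid and non-rigid actions. The paper's detour buys a geometric explanation of the row-sum condition, which it reuses for the core implications, but the theorem itself does not need it.

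\textbf{Your stated main obstacle.} This is not a real obstacle. By definition $Q_{\eex}=+\chi(\Ocal_{\zz},\Ocal_{\zz})$; this is what gives (\ref{DimEq}), since $\ext^1=\dim T_{\zz}\eex^{[\rho]}$. Your $m$ already carries this sign, so no extra minus is needed. The identification follows from the equivariant Koszul resolution, $\Ext^i_{\eex}(\Ocal_0(g),\Ocal_0(h))\simeq(\Lambda^iW^{\vee}\otimes\chi_h\otimes\chi_g^{-1})^G$ with $W\simeq\chi(g_x)+\chi(g_y)$, so neither GRR nor a compactification is needed. It also agrees with the paper's completed sum, because $A_\theta B_\theta(1-\chi_\theta(-g_x))(1-\chi_\theta(-g_y))=|\chi_\rho(\theta)|^2(1-\rhostd+\chi_c)(\theta)$.
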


Section 5 consists of the construction of a natural graph which may be 
associated to $\eex$. It will be referred to as the generalized McKay graph 
of $\eex$ and denoted $\Gamma_{\eex}$. A natural decomposition of the graph 
will be given so that 
$$\Gamma_{\eex} = \Gamma_{\eex}^{+} \cup \Gamma_{\eex}^{-}$$
for desirable spanning subgraphs. Moreso it will be shown if 
the associated homomorphism of the action $G \rightarrow GL(2)$ is 
injective then there is a natural isomorphism 
$\Gamma_G \simeq \Gamma_{\eex}^{+}$, where $\Gamma_G$ is the 
McKay graph of $G$. This will be used to reinterpret terms of equation 
\ref{DimEq} in terms of representation theoretic data and leave 
open the possibility for other contributions to originate from 
scheme theory.

Section 6 defines a grouping of elements $\{g_i\} \in G$ to be 
the subsets which consists of elements equal to each other. By Theorem 
\ref{EulerFormCoeffs} in order to give numerical values for the coefficients
it suffices to provide all of the possible groupings of
$\{g_x,-g_x,g_y,-g_y,c,-c\}$ arising from quotient stacks. Upon doing so the 
corresponding Euler form is calculated and \textit{a fortiori} the Euler form 
is calculated for all abelian planar stacks. 

Section 7 applies groupings to be able to classify all abelian planar stacks.
From there, analysis is conducted in individual cases to be able to more 
precisely determine which actions give rise to a stack with the given 
grouping. The ideas are then developed to re-interpret certain classes 
in terms of points and subsets of a ``moduli space" of all such quotient 
stacks.

The remainder of the paper will consist of an analysis of one of the 
families which arises from the previous classification: diagonal type stacks
for a cyclic action. This is done by an analysis of Young diagrams colored 
according to the rule $\CC_n(i,j) = i - j \modn$. From a scheme theoretic 
point of view, $\CC_n(i,j)$ corresponds to the isomorphism of representations
$\Span(x^i y^j) \simeq \chi(i - j)$.

Section 8 will discuss the connection between colored diagrams and certain 
elements of $\eex$, referred to as monomial substacks. This provides the 
necessary means to employ stacks to study Young diagrams. Next we will 
discuss algebraic means to represent removing border strips from a diagram. 
Four notions of minimal diagrams will be developed in the general case, 
referred to as $B$-core, $n$-core, $MS$-core, and $G$-core. We will show 
all four of these notions differ for general actions. 

In Section 9 we will show for all four definitions of core coincide for 
diagrams colored with $\CC_{n}$ by proving Theorem \ref{AllCoreEq}:
\begin{theorem*}
  Let $Y$ be a Young diagram colored with respect to $\CC_n$. Then the 
  partition associated to $Y$ is an $n$-core iff $Y$ contains no significant 
  arrows as defined in \cite{Maclagan_Smith_2010}.
\end{theorem*}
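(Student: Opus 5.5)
The plan is to combine the tangent space formula \eqref{DimEq} with Theorem \ref{EulerFormCoeffs}, and to compare the result with the Maclagan--Smith count of significant arrows. Let $Y$ be colored by $\CC_n$. This corresponds to the $(n;1,-1)$ action and the monomial substack $\zz_Y$, whose ideal is spanned by the monomials $x^iy^j$ with $(i,j)\notin Y$. Put $\rho_Y = H^0(\zz_Y) = \sum_i a_i \chi(i)$, so that $a_i$ is the number of boxes of $Y$ of color $i$.

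\textbf{Step 1: the tangent space at $\zz_Y$ is spanned by the significant arrows.} By Theorem 1.1 of \cite{Maclagan_Smith_2010}, the $G$-invariant part of the tangent space of the Hilbert scheme at a monomial ideal has a basis indexed by its significant arrows. Since $\eex^{[\rho]}$ is identified with a component of the $G$-fixed locus (Section 2), we get $\dim T_{\zz_Y}\eex^{[\rho_Y]} = \#\{\text{significant arrows of } Y\}$.

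\textbf{Step 2: compute the Euler form.} Here $g_x=1$, $g_y=-1$ and $c=0$, so $\rhostd - \chi_c = \chi_1+\chi_{-1}-\chi_0$. For $n\ge 3$ the coefficient $b_{g,h}$ in Theorem \ref{EulerFormCoeffs} equals $1$ exactly when $g-h=\pm1$ and $0$ otherwise. Hence
\begin{equation*}
Q_\eex(\{a_i\})=\sum_{i\in\Z/n}(a_i-a_{i+1})^2 ,
\end{equation*}
which is the affine $\tilde A_{n-1}$ form. The case $n=2$ is checked separately, and the same identity holds with the doubled edge. By \eqref{DimEq}, the number of significant arrows is therefore $2a_0 - Q_\eex(a)$. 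The vector $\delta=(1,\dots,1)$ lies in the radical of $Q_\eex$, so the function $F(a)=2a_0-Q_\eex(a)$ satisfies $F(a+w\delta)=F(a)+2w$.

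\textbf{Step 3: reduce to the core.} Write $Y$ as its $n$-core $Y_0$ together with $w$ removable border $n$-strips, where $w$ is the $n$-weight. A border strip of length $n$ has consecutive contents, so it contains exactly one box of each color. Removing it therefore subtracts $\delta$ from the content vector, and $a(Y)=a(Y_0)+w\delta$. Granting $F(a(Y_0))=0$ for every $n$-core, we get
\begin{equation*}
\#\{\text{significant arrows of } Y\}=F(a(Y))=2w .
\end{equation*}
This is zero iff $w=0$, i.e.\ iff $Y$ is an $n$-core, which proves both directions at once.

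\textbf{Step 4: show $F$ vanishes on cores; this is the main obstacle.} I would proceed by induction using the affine Weyl group action on cores. For an $n$-core, the addable boxes of color $i$ and the removable boxes of color $i$ cannot both occur. Adding all addable boxes of color $i$, or removing all removable ones, yields another core, and every core is reached from $\emptyset$ by such moves. Counting addable minus removable boxes of color $i$ in terms of the neighbouring colors gives the new content
\begin{equation*}
a_i' = a_{i-1}+a_{i+1}-a_i+\delta_{i,0},
\end{equation*}
with all other $a_j$ unchanged. A direct expansion then shows $F(a')=F(a)$. Verifying this expansion, especially the $\delta_{i,0}$ term interacting with the $2a_0$ summand, and checking the count of addable minus removable boxes, is the computation I expect to need care. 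Since $F(0)=0$, this yields $F=0$ on all cores.

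If this induction proves awkward, a fallback is to use that $\eex^{[\rho]}$ is smooth of dimension $F(a)$ and connected. For a core content vector, any monomial ideal with that content must be the core itself, since cores are determined by their content vectors. The torus-fixed locus is then a single point, so $\eex^{[\rho]}$ is a point and $F=0$.
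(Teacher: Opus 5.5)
Your main argument is correct, but it takes a genuinely different route: it runs in the opposite logical direction from the paper.

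\textbf{The paper's route.} The paper does not prove Theorem~\ref{AllCoreEq} through the identity $\#T(M)=2w$. It closes a cycle of implications between four notions of core, and it never uses the explicit coefficients of Theorem~\ref{EulerFormCoeffs}.
\begin{itemize}
\item The direction $n$-core $\Rightarrow$ no significant arrows is purely combinatorial. A nonnegative significant arrow is pushed to the border (Lemma~\ref{BorderPush}). Its shafts have length divisible by $n$ (Lemma~\ref{ShaftLength}). An induction on the number of generators of $M$ then produces a hook of length divisible by $n$.
\item The converse is geometric. No significant arrows gives $t(Y)=0$ by the Maclagan--Smith tangent count, which is the same input as your Step~1.
\item Next, $t(Y)=0$ rules out excisable strips, using disentanglements and $t_\eex(\rho+\RR)=t_\eex(\rho)+2$.
\item Finally, every rim $n$-hook is excisable (Propositions~\ref{ExcisableSubsets}, \ref{RegColoring}, \ref{CoreEq_nCore}).
\end{itemize}
The count of removable strips appears only afterwards, as Corollary~\ref{NumBorderStripFormula}, deduced from these equivalences.

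\textbf{Your route.} You prove that count first and read off both directions at once.
\begin{itemize}
\item What it buys: the quantitative statement comes for free. You also avoid disentanglements, Maclagan--Smith connectedness, and the arrow/shaft combinatorics.
\item What it costs: your proof depends on the GRR computation (Theorem~\ref{EulerFormCoeffs} and \eqref{DimEq}). It also depends on classical facts about the $\widetilde{S}_n$-action on $n$-cores. Cite these (abacus, Lascoux, Misra--Miwa) rather than re-derive them.
\end{itemize}
For your induction you only need the following: a nonempty core has a removable box of some color $i$, hence no addable $i$-box, and removing all its removable $i$-boxes leaves a smaller core.

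The expansion you flag in Step~4 is automatic. Set $\mathrm{wt}=\Lambda_0-\sum_i a_i\alpha_i$. Then $F(a)=-(\mathrm{wt},\mathrm{wt})$ and $\#\mathrm{add}_i-\#\mathrm{rem}_i=\langle h_i,\mathrm{wt}\rangle$. Your move is the simple reflection $s_i$, so $F(a')=F(a)$ is just $W$-invariance of the invariant form. Concretely, with $d=\tfrac12\partial_iF(a)$ one gets $F(a+de_i)=F(a)+2d^2-2d^2$.

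\textbf{The fallback does not work; discard it.} A unique torus-fixed point does not force a non-proper smooth variety to be a point. With a contracting one-parameter subgroup, Bia\l{}ynicki-Birula only gives $\eex^{[\rho]}\simeq\aff^{\dim \eex^{[\rho]}}$. Already $\mathrm{Hilb}^1(\aff^2)=\aff^2$ has a single torus-fixed point. To get dimension zero you would need the tangent space at the core to vanish, which is exactly the statement being proved.
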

Furthermore by Theorem \ref{EulerFormCoeffs} of this note and Theorem 4.9 of 
\cite{Maclagan_Smith_2010} we will be able to give explicit formula 
for the number of significant arrows of a monomial ideal. 

Section 10 will examine consequences of Theorem \ref{AllCoreEq}. 
In what follows for a Young diagram $Y$ colored by $\CC_n$ set 
$a_i = \#(\CC_n^{-1}(i) \cap Y)$. We will prove an explicit formula in 
terms of the $\{a_i\}$ for the number of border strips which may be removed 
from $Y$ before obtaining an $n$-core in Corollary 
\ref{NumBorderStripFormula}: 
\begin{coro*}
  Let $Y \subset \N^2$ be a Young diagram colored by $\CC_n$ and 
  set $a_n = a_0$. Then 
  $$2B = 2a_0 - \sum_{i = 0}^{n - 1} (a_i - a_{i + 1})^2$$
  where $B$ is equal to the number of border strips which can be 
  removed from $Y$ before obtaining an $n$-core.
\end{coro*}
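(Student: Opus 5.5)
The plan is to compute the right-hand side for the $(n;1,-1)$ action, where $g_x = 1$, $g_y = -1$ and $c = 0$, and then show it is additive under removal of border strips. By equation \eqref{DimEq}, the quantity $2a_0 - \sum_i (a_i - a_{i+1})^2$ should then be identified with $\dim T_{\zz}\eex^{[\rho]}$ for the monomial substack $\zz$ attached to $Y$. Concretely, there are four steps:
\begin{enumerate}
\item identify the Euler form;
\item show it is invariant under removing a border strip of length $n$;
\item show the right-hand side vanishes on $n$-cores;
\item check that $B$ is well defined.
\end{enumerate}

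\textbf{Step 1 (the Euler form).} Apply Theorem \ref{EulerFormCoeffs}. Here $\rhostd - \chi_c = \chi_1 + \chi_{-1} - \chi_0$, so
$$b_{g,h} = \tfrac12\innprod{\chi_1+\chi_{-1}-\chi_0}{\chi_{g-h}+\chi_{h-g}}.$$
For $n \geq 3$ this gives $b_{g,h} = 1$ when $g - h = \pm 1$ and $b_{g,h} = 0$ otherwise. Hence
$$Q_{\eex}(\{a_i\}) = \sum_{i=0}^{n-1}(a_i-a_{i+1})^2 \qquad (a_n = a_0).$$
The small cases need a separate check.
\begin{itemize}
\item For $n = 2$ we have $1 = -1$, so the single pair $\{0,1\}$ gets $b = 2$. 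This matches the cyclic sum, which counts $(a_0-a_1)^2$ twice.
\item For $n = 1$ the set $D$ is empty, so $Q = 0$, and the cyclic sum is also $0$.
\end{itemize}
In all cases $a_0 + a_{-c} = 2a_0$, so the claimed identity reads $2B = 2a_0 - Q_{\eex}(\{a_i\})$.

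\textbf{Step 2 (reduction to the core).} A border strip of length $n$ consists of $n$ boxes along a connected rim path. Consecutive boxes on that path differ by one step right or one step down, so their colours $i - j \bmod n$ change by exactly $\pm 1$ in the same direction all along the path. Hence the strip contains exactly one box of each colour.

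It follows that removing a strip lowers every $a_i$ by $1$. This leaves all differences $a_i - a_{i+1}$, and hence $Q_{\eex}$, unchanged, while $2a_0$ drops by $2$. Writing $Y_{\mathrm{core}}$ for the $n$-core reached after removing $B$ strips, we get
$$2a_0(Y) - Q(Y) = 2B + \bigl(2a_0(Y_{\mathrm{core}}) - Q(Y_{\mathrm{core}})\bigr).$$

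\textbf{Step 3 (vanishing on cores).} This is the main obstacle: it remains to show $2a_0 - Q = 0$ when $Y$ is an $n$-core. By \eqref{DimEq} this quantity equals $\dim T_{\zz}\eex^{[\rho]}$ at the monomial substack $\zz$ corresponding to $Y$. By Theorem 4.9 of \cite{Maclagan_Smith_2010}, that tangent dimension equals the number of significant arrows of the monomial ideal. By Theorem \ref{AllCoreEq}, an $n$-core has no significant arrows, so the tangent space is zero.

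\textbf{Step 4 (well-definedness of $B$).} I would note that $B = (|Y| - |Y_{\mathrm{core}}|)/n$. Since the $n$-core of a partition is unique, $B$ does not depend on the order in which strips are removed. With this, the formula follows.
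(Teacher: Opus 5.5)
Your proof is correct, but it runs the induction from the opposite side to the paper's.

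\textbf{The paper's route.} The paper works geometrically. If $Y$ is not a $G$-core, it uses $B\Rightarrow G$ (so the full cycle $B\Leftrightarrow n\Rightarrow MS\Leftrightarrow G$) to produce an excisable border strip. It reads off from the disentanglement lemma that this strip is regularly colored, tacitly of color type $(1,\dots,1)$, and so lowers $t_n$ by $2$. It repeats until $t_n=0$. At that point it needs $G\Rightarrow B\Leftrightarrow n$ to know the remaining diagram is an $n$-core.

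\textbf{Your route.} You perform ordinary rim $n$-hook removal. You check directly that the color $i-j \bmod n$ increases by one at each step along a strip, so each residue occurs exactly once. You then need only the single implication that an $n$-core has $t_n=0$, via Theorem \ref{AllCoreEq} together with Theorem 4.9 of \cite{Maclagan_Smith_2010}.

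\textbf{What your route buys.} It bypasses Propositions \ref{ExcisableSubsets} and \ref{RegColoring} and the disentanglement machinery entirely. It makes explicit that every removed strip has length exactly $n$. It also spells out the Euler-form computation behind $t_n(Y)=\dim \DD_n^{[\rho]}$, including the degenerate cases $n=1,2$, which the paper treats as clear.

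\textbf{A minor point.} Your Step 4 is not needed. Steps 2 and 3 already show that any sequence of rim $n$-hook removals ending at an $n$-core has exactly $t_n(Y)/2$ steps. So independence of the removal order falls out of the identity itself, without appealing to uniqueness of the $n$-core.
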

In addition we will use this and related work to give an equational 
characterization of $n$-core diagrams. 
\begin{coro*}
  $Y$ is an $n$-core diagram if and only if
  $2a_0 = \sum_{i = 0}^{n - 1} (a_i - a_{i + 1})^2$
\end{coro*}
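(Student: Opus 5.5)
This is the second corollary: $Y$ is an $n$-core if and only if $2a_0 = \sum_{i=0}^{n-1}(a_i-a_{i+1})^2$. I would derive it directly from Corollary \ref{NumBorderStripFormula}. That corollary gives
\[
2B = 2a_0 - \sum_{i=0}^{n-1}(a_i-a_{i+1})^2,
\]
where $B$ is the number of border strips that can be removed from $Y$ before an $n$-core is reached. The equation in the statement is therefore equivalent to $B = 0$. It remains to show that $B=0$ exactly when $Y$ is already an $n$-core.

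\emph{Step 1: $B$ is well defined.} The number of $n$-border strips removed from a partition before reaching its $n$-core is the $n$-weight. It is independent of the order of removal, because the $n$-core is unique; on the abacus this is the number of single-bead moves up a runner. I would cite this standard fact, or the paper's treatment in Section 8, to make sure $B$ is the well-defined quantity used in Corollary \ref{NumBorderStripFormula}.

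\emph{Step 2: $B=0$ if and only if $Y$ is an $n$-core.}
\begin{itemize}
\item If $Y$ is an $n$-core, it admits no removable $n$-border strip (equivalently, no hook of length $n$). The removal process stops immediately, so $B=0$.
\item Conversely, if $B=0$, no strip is removed before obtaining an $n$-core, so $Y$ itself is that $n$-core.
\end{itemize}
Combining this with the displayed identity gives the equivalence. One could also route this through Theorem \ref{AllCoreEq}, using the significant-arrows count of \cite{Maclagan_Smith_2010}, but it is not needed.

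\emph{Main obstacle.} There is no real difficulty once Corollary \ref{NumBorderStripFormula} is available. The only point to check is that the notion of $n$-core in the statement matches the one used to define $B$. Section 8 introduces four notions of core, and for $\CC_n$ they coincide by Theorem \ref{AllCoreEq}. I would state explicitly that here ``$n$-core'' means having no removable $n$-border strip, so that the stopping condition in the definition of $B$ is exactly the core condition.
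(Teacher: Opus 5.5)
Your argument is correct and is essentially the paper's. The paper reads the equation as $t_n(Y)=\dim\DD_n^{[\rho]}=0$, i.e.\ $Y$ is a $G$-core, and concludes via the equivalence of $n$-cores and $G$-cores coming from Theorem \ref{AllCoreEq}; Corollary \ref{NumBorderStripFormula}, which you cite, is itself proved from that same equivalence, so routing through $B=0$ only repackages this step.
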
 
The final analysis will be giving a stack theoretic interpretation of 
these two corollaries. Specifically $n$-core diagrams will be shown to 
correspond to minimal $\eex^{[\rho]}$ from which all others are built; 
This gives rise to a ``pseudo"-stratification of $\eex^{[\rho]}$ in the event 
that $\eex^{[\rho]}$ is not minimal.

\subsection*{Acknowledgements}

The first author would like to thank many people. The first of which is 
his family as well of course as everyone who has joined our little 
cluster over the years. You all have continued to show support over the 
years when his interest in math was first beginning, all the way through 
the time of writing and presumably beyond this as well; It is deeply 
appreciated. He would also like to thank his older friends and 
the many others he has come to meet over the years. In addition gratitude 
goes to his roommates and friends who have been able to make graduate school 
more than just a place to get a degree, and to Manny and Depp.

Both authors would also like to thank Sudheer Shukla 
who provided the proof of Lemma \ref{SudheersLemma}; The lemma led to 
the technique used to calculate the Euler forms. In addition 
we would like to thank Leonel Medina for his help in putting together a 
program to run experiments on the coefficients of Euler forms. The 
results of the program directly led to the idea that explicit formulas 
for the coefficients, i.e., Theorem \ref{EulerFormCoeffs}, were possible 
for more general actions.

\section{Some Results on DM Stacks}

This section will consist of the necessary prerequisite results and 
definitions which will be used through the remainder of the 
paper. As most of the results do not require much machinery to prove or 
follow from standard techniques, many details will be omitted and some proofs 
will be sketched. The remainder of the details which are not supplied
can be found in the first authors forthcoming dissertation, from which this 
paper was developed.  

\subsection{The Gilbert Scheme of a Representation}

For a zero dimensional substack $\zz \subset \eex$ let $\xi$ denote the 
corresponding $G$-subscheme of $\aff^2$. Necessarily $h^0(\xi) < \infty$ 
so that when $H^0(\xi)$ is equipped with the induced action of $G$ it may be 
regarded as an element of $\Rep(G)$. 
\begin{definition}
    The representation type of both $\zz$ and $\xi$ is the isomorphism class
    $[H^0(\xi)] \in \Rep(G)$, and denoted $\varrho(\zz), \varrho(\xi)$
    respectively.
\end{definition}
To ensure representation type is a suitable notion to consider for families 
of subschemes, set $X = \aff^2$, let $T$ be an integral scheme, and let
$\Xi \subset X \times T$ be a $G$-flat family of zero dimensional 
subschemes. Furthermore, define $X_T = X \times_{\C} T$ and
$\eex_T = \eex \times_{\C} T$.
\begin{prop}\label{GFlatLimit}
 There exists a unique $\rho \in \Rep(G)$ such that 
 $\varrho(\Xi_t) \simeq \rho$ for all closed $t \in T$.
 \begin{proof}
  Note the claim is local on $T$ and hence we may assume $T = \Spec(A)$. 
  Let $I$ denote the ideal corresponding to $\Xi$ in $A[x,y]$.
  Next fix a choice of $t \in T$ and let $m_t \subset A$ 
  denote the corresponding maximal ideal. Pass to the 
  localization $M_t = M \otimes_A A_{(m_t)}$. From the definition of 
  $\rho = \varrho(\Xi_t)$ it follows there is an isomorphism of 
  representations $M/ m_t M \simeq \rho$. By Nakayamas lemma we may lift a 
  basis of $M/ m_t M$, $\{f_i'\}$, to a minimal generating set of $M_t$ denoted 
  as $\{f_i\}$. Clear denominators so that we may assume 
  $f_i \in \im(M \rightarrow M_t)$ and thus also assume $\tilde{f}_i$
  is a lift of $f_i$ for each $i$. Now consider 
  $\sum_i A \tilde{f}_i \subset M$. 
    
  As $A \rightarrow A_{(m_t)}$ is injective and $M$ is flat it follows 
  $M \rightarrow M_t$ is injective as well. In particular for a relation of 
  the form
  \begin{equation*}
    a_1 \tilde{f}_i + ... + a_m \tilde{f}_m = 0
  \end{equation*}
  upon pushing forward and using freeness of $M_t$ necessarily each 
  $a_i = 0$. In other words $\sum_i A \tilde{f}_i$ is a free $A$-module.
  Hence by comparing lengths and using $\bigoplus_i A_t f_i \simeq M_t$ 
  we may prove $M \simeq \bigoplus_i A \tilde{f}_i$.
  Now factor $A$ from the last direct sum so
  \begin{equation*}
    M \simeq \bigoplus_i A \tilde{f}_i \simeq A \otimes \bigg(\bigoplus_i \C \tilde{f}_i \bigg)
  \end{equation*}
  As $A \rightarrow A_{(m_t)}$ is an injective $\C$-linear map, the action of $G$ 
  fixes the coefficients. Each $f_i$ is a lift so that
  \begin{equation*}
    \bigoplus_i \C \tilde{f}_i \simeq \bigoplus_i \C f_i \simeq \bigoplus_i \C \tilde{f}_i' \simeq \rho \in \Rep(G)
  \end{equation*}
  Hence $M \simeq A \otimes_{\C} \rho$ for some $\C$-linear injection 
  $1 \otimes \rho \rightarrow M$. To finish up choose an arbitrary 
  maximal ideal $m_s \subset A$ so $A/m_s \simeq \C$ and calculate the 
  isomorphisms of $\C$-modules
  \begin{equation*}
    \varrho(\Xi_s) = M_s \simeq M \otimes_A A/m_s \simeq A/m_s \otimes_{\C} \rho \simeq \rho
  \end{equation*}
  For the last isomorphism we have used $A \rightarrow A/m_s$ is a 
  $\C$-algebra homomorphism and hence is injective on $\C$. Finally we applied 
  a simple dimension count.
 \end{proof}
\end{prop}
This proposition will be more succinctly referred to as ``$G$-flat limits 
preserve representation type". One useful consequence is a characterization 
of flat families of zero dimensional substacks of $\eex$, which enables us to 
translate between $G$-subschemes of $\aff^2$ and substacks of $\eex$.
\begin{prop}\label{FlatnessCrit}
    Fix an integral scheme $T \in Sch/k$. Define $G$ to act trivially on $T$ 
    in the product $\eex_T$. Let $\mathfrak{Z} \rightarrow \eex_T$
    be a family of 0 dimensional substacks and let $\Xi \subset \aff_T^2$
    be the associated family of subschemes.
    Then $\mathfrak{Z} \rightarrow T$ is flat iff there exists a 
    unique $\rho \in \Rep(G)$ such that $\varrho(\Xi_t) \simeq \rho$ for 
    all closed $t \in T$.
\end{prop}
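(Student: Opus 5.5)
The plan is to reduce flatness of $\mathfrak{Z} \rightarrow T$ to flatness of $\Xi \rightarrow T$, and then to use Proposition \ref{GFlatLimit} in one direction and a constant-fibre-dimension criterion in the other. The question is local on $T$, so we may take $T = \Spec(A)$ with $A$ an integral domain of finite type over $\C$. Let $I \subset A[x,y]$ be the $G$-stable ideal of $\Xi$ and set $M = A[x,y]/I$. Because $\Xi$ is a family of zero dimensional subschemes, $M$ is a finite $A$-module; I will take this as part of what ``family of 0 dimensional substacks'' means, or else deduce it from properness of $\Xi \to T$.

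\emph{Reduction to $\Xi$.} The morphism $\aff^2_T \rightarrow \eex_T$ is a $G$-torsor, hence an fppf (indeed étale) cover. We have $\Xi = \mathfrak{Z} \times_{\eex_T} \aff^2_T$, so $\Xi \rightarrow \mathfrak{Z}$ is a $G$-torsor as well, in particular faithfully flat. Flatness of a stack over $T$ is defined, and can be checked, on an fppf atlas. Therefore $\mathfrak{Z} \rightarrow T$ is flat if and only if $\Xi \rightarrow T$ is flat, and since $\Xi$ is affine over $T$ this holds if and only if $M$ is a flat $A$-module.

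\emph{Forward direction.} If $M$ is flat, then $\Xi$ is a $G$-flat family of zero dimensional subschemes. Proposition \ref{GFlatLimit} then produces the unique $\rho$ with $\varrho(\Xi_t) \simeq \rho$ for every closed $t$. Uniqueness is automatic, because $\Rep(G)$ has unique isotypic decompositions.

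\emph{Converse.} Suppose $\varrho(\Xi_t) \simeq \rho$ for all closed $t \in T$. Then
\[
\dim_{\C} M \otimes_A A/m_t = \dim \rho
\]
is constant over closed points. $M$ is finitely generated over the noetherian reduced ring $A$, and the standard criterion says that such a module is locally free exactly when its fibre dimension is locally constant. I expect this criterion to be the main step to justify. I would argue it as follows.
\begin{itemize}
\item Locally at $t$, lift a basis of $M/m_tM$ to generators of $M_{m_t}$ by Nakayama. This gives a surjection $A_{m_t}^{r} \rightarrow M_{m_t}$ with kernel $K$, where $r = \dim\rho$.
\item After shrinking to a principal open, $K$ becomes finitely generated and the surjection holds over it. The fibre dimension at the generic point is at most $r$, and by upper semicontinuity it equals $r$ on a dense set of closed points. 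Comparing with $A^r$ at the generic point forces $K \otimes \mathrm{Frac}(A) = 0$.
\item Since $A$ is a domain, $A^r$ is torsion free, so $K$ is torsion free. A torsion-free module that vanishes after tensoring with $\mathrm{Frac}(A)$ is zero, so $K = 0$.
\end{itemize}
Hence $M$ is locally free, hence flat, and $\mathfrak{Z} \rightarrow T$ is flat. Integrality of $T$ is used precisely in this last step, which is why the hypothesis appears in the statement.
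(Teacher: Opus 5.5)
Your proof is correct, but it is organized differently from the paper's sketch. The paper also passes from $\mathfrak{Z}$ to the $G$-equivariant $A[x,y]$-module $M$, via $\Coh(\eex_T) \simeq \fgMod^G_{A[x,y]}$; this is the same descent along $\aff^2_T \to \eex_T$ that you use. It then splits $M = \bigoplus_\chi M_\chi$ into isotypic components and rests on the lemma that $M$ is flat iff every $M_\chi$ is flat. Because $G$ acts trivially on $A$, the isotypic projectors are $A$-linear. Hence $M_\chi \otimes_A A/m_t$ is the $\chi$-part of $M \otimes_A A/m_t$, and its dimension is the multiplicity of $\chi$ in $\varrho(\Xi_t)$. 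Applying the constant-rank criterion to each $M_\chi$ then gives both implications at once, with no separate appeal to Proposition \ref{GFlatLimit}.

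You never split $M$. The forward direction is delegated to Proposition \ref{GFlatLimit}, and the converse applies the constant-rank criterion to $M$ itself, using only that $\dim\rho$ is constant. This yields a slightly stronger statement: over an integral base of finite type, constant total length $h^0(\Xi_t)$ already forces flatness, and constancy of the full representation type then follows. The price is that your forward direction depends on Proposition \ref{GFlatLimit}. Its written proof claims a global isomorphism $M \simeq A \otimes_{\C} \rho$, which holds in general only locally on $T$; that local version plus connectedness of $T$ is all that is actually needed.

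Your explicit hypotheses that $A$ is of finite type over $\C$ and that $M$ is finite over $A$ are genuinely necessary, and the paper leaves them implicit. Over the spectrum of a DVR, the closed fibre alone says nothing about flatness. For finiteness, take $G$ trivial: the subscheme $\Xi = V\bigl((xt-1,y)\cdot(x,y,t)\bigr) \subset \aff^2 \times \aff^1$ has fibres of length $1$ over every closed point. It is nevertheless not flat, since its coordinate ring is $\C[t,t^{-1}] \times \C[t]/(t)$, which has $t$-torsion.
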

The proof is straightforward, albeit lengthy. First a reduction is made to 
the case when $T = \Spec(A)$ is performed. The key point is that
a flat family of substacks, $\mathfrak{Z}$, corresponds to a family of modules 
$M$ over $A$ via the equivalence of categories 
$\Coh(\eex_T) \simeq \fgMod_{A[x,y]}^G$. As $G$ acts trivially on $A$ and $M$ is a
finite module over $T$, it admits an isotypic decomposition 
$M \simeq \bigoplus_{\chi \in \Irr(G)} M_{\chi}$. The proposition follows from:
\begin{lemma} $\text{$\mathfrak{Z}$ is flat over $T$} \Leftrightarrow \text{$M$ is flat} \Leftrightarrow \text{$M_{\chi}$ is flat for all } \chi$
\end{lemma}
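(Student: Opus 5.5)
We split the statement into the two equivalences and take $T = \Spec(A)$, using the reduction already made. Let $\pi: \aff^2_T \rightarrow \eex_T$ be the quotient map and $p: \eex_T \rightarrow T$ the structure map. Then $\mathfrak{Z}$ corresponds to the $G$-equivariant $A[x,y]$-module $M = H^0(\Xi, \Ocal_{\Xi})$ via $\Coh(\eex_T) \simeq \fgMod_{A[x,y]}^G$.

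\textbf{First equivalence.} Flatness of $\mathfrak{Z} \rightarrow T$ is tested on an atlas. Since $\pi$ is finite étale and surjective, $\mathfrak{Z}$ is flat over $T$ if and only if $\Xi = \mathfrak{Z} \times_{\eex_T} \aff^2_T$ is flat over $T$. The scheme $\Xi$ is affine and finite over $T$, as it is zero dimensional and proper over $T$, and $\Xi = \Spec_A(M)$. Hence $\Xi$ is flat over $T$ if and only if $M$ is a flat $A$-module. The only point to check is that "flat over $T$" for a morphism from a stack is defined through a smooth (here étale) atlas, so it agrees with flatness of $\Xi \rightarrow T$. Alternatively, one can argue directly with the pushforward $p_*\Ocal_{\mathfrak{Z}} = M^G$ together with the twisted pieces, but the atlas argument avoids this.

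\textbf{Second equivalence.} Because $G$ is finite and $\operatorname{char}\C = 0$, the operators
\begin{equation*}
  e_\chi = \frac{1}{|G|}\sum_{g \in G} \chi(g)^{-1} g
\end{equation*}
are $A$-linear, since $G$ acts trivially on $A$. They are orthogonal idempotents summing to $1$, so $M = \bigoplus_\chi e_\chi M = \bigoplus_\chi M_\chi$ as $A$-modules. A direct sum of $A$-modules is flat if and only if each summand is flat, because $\operatorname{Tor}_1^A(N, -)$ commutes with direct sums; equivalently, a direct summand of a flat module is flat. This gives $M$ flat $\Leftrightarrow$ $M_\chi$ flat for all $\chi$.

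\textbf{Main obstacle.} The main obstacle is the first equivalence, specifically making precise that flatness of a zero dimensional substack over $T$ can be checked on the $G$-equivariant cover. The rest is formal. I would first show that $\Xi \rightarrow \mathfrak{Z}$ is étale surjective, which follows from base change of $\aff^2 \rightarrow \eex$. I would then invoke the standard fact that flatness is local in the smooth topology on the source.

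As a corollary connecting back to Proposition \ref{FlatnessCrit}: each $M_\chi$ is finitely generated over a Noetherian $A$, so it is flat if and only if it is locally free. This holds if and only if $\dim M_\chi \otimes k(t)$ is locally constant, given that $T$ is reduced; this last step is where integrality of $T$ enters. That condition says exactly that $\varrho(\Xi_t)$ is constant, using Proposition \ref{GFlatLimit} for the converse direction.
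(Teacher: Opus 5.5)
Your proposal is correct and follows the route the paper indicates. The paper states this lemma without proof; it only sets up the correspondence $\mathfrak{Z}\leftrightarrow M$ via $\Coh(\eex_T)\simeq\fgMod_{A[x,y]}^G$ and notes the isotypic splitting coming from the trivial action of $G$ on $A$. Your étale-atlas argument through the $G$-torsor $\aff^2_T\to\eex_T$, together with the $A$-linear idempotents $e_\chi$, supplies exactly the omitted details.

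One small remark: the first equivalence does not need $\Xi$ to be finite or proper over $T$. Since $\Xi$ is a closed subscheme of $\aff^2_A$, it is affine, $\Xi=\Spec(A[x,y]/I)$, so flatness of $\Xi\to T$ is simply flatness of $M$ as an $A$-module.
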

Next for any $\rho \in \Rep(G)$ the functor 
$\Hilb_{\rho}: (Sch/\C_{et})^{op} \rightarrow Set$
is defined as follows: For $T \in Sch/\C$, define $\Hilb_{\rho}[\eex](T)$ to 
consist of all flat families of zero dimensional substacks 
$\XX \subset \eex_T$ such that $\varrho(\XX_t) \simeq \rho$ for all closed 
points $t \in T$. The previous two results show $\Hilb_{\rho}$ is well defined
and just as well also parameterizes families of subschemes with the 
given representation type. Following the results of \cite{Olsson_Starr_2008}, 
in a forthcoming work it will be proven
\begin{prop}
    $\Hilb_{\rho}(\eex)$ is represented by a quasiprojective scheme. 
\end{prop}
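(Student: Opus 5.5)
The plan is to realize $\Hilb_{\rho}[\eex]$ as an open and closed subscheme of the $G$-fixed locus of an ordinary Hilbert scheme of points on $\aff^2$. Set $n = \dim \rho$. By Proposition \ref{FlatnessCrit} and the equivalence $\Coh(\eex_T) \simeq \fgMod^G_{A[x,y]}$, a $T$-point of $\Hilb_{\rho}[\eex]$ is the same as a $G$-invariant closed subscheme $\Xi \subset \aff^2_T$, flat over $T$, whose fibers have length $n$ and representation type $\rho$. Forgetting the $G$-structure therefore gives a monomorphism of functors
$$\Hilb_{\rho}[\eex] \longrightarrow \Hilbs^n(\aff^2),$$
and $\Hilbs^n(\aff^2)$ is a quasiprojective scheme. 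It is an open subscheme of $\Hilbs^n(\p^2)$, being the locus where the subscheme misses the line at infinity.

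Next I would identify the image. The linear $G$-action on $\aff^2$ induces an action on $\Hilbs^n(\aff^2)$. A family $\Xi$ is $G$-invariant exactly when its classifying map $T \to \Hilbs^n(\aff^2)$ factors through the fixed-point functor. Since $G$ is finite and $\Hilbs^n(\aff^2)$ is separated, this functor is represented by a closed subscheme $\Hilbs^n(\aff^2)^G$. It is the equalizer of the maps $\mathrm{id}$ and $g$ for $g \in G$, and equalizers are closed because the diagonal is a closed immersion. So $\Hilbs^n(\aff^2)^G$ is again quasiprojective, and it carries a universal $G$-invariant family $\Xi^{univ}$. Its pushforward to the fixed locus is a locally free sheaf $M$ of rank $n$ with a fiberwise $G$-action, and it splits into isotypic pieces $M = \bigoplus_{\chi} M_{\chi}$.

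By the lemma preceding this proposition, each $M_{\chi}$ is flat, hence locally free, and its rank is locally constant. Proposition \ref{GFlatLimit}, applied to each connected component, gives the same conclusion: the representation type is constant on connected components. Hence the locus where $\varrho = \rho$ is a union of connected components, so it is open and closed in $\Hilbs^n(\aff^2)^G$. This locus represents $\Hilb_{\rho}[\eex]$, and therefore $\Hilb_{\rho}[\eex]$ is quasiprojective.

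The main obstacle is a technical point in the rank argument. Proposition \ref{GFlatLimit} is stated for integral bases $T$, but the fixed locus may be nonreduced. I would avoid this by working with the ranks of the locally free sheaves $M_{\chi}$ directly. These ranks are locally constant on any base and only need to be checked at closed points. This gives the open-and-closed decomposition without any reducedness hypothesis.

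Alternatively, one could cite \cite{Olsson_Starr_2008}: $\eex$ has coarse space $\aff^2/G$, which is affine, and the Quot/Hilbert stack of a tame DM stack with projective coarse space is quasiprojective. That route requires compactifying and fixing a generating sheaf, so the fixed-point argument above is the more direct one.
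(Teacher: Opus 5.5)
Your argument is correct, but it is not the route the paper has in mind. The paper gives no proof at this point. It defers to forthcoming work ``following the results of'' \cite{Olsson_Starr_2008}, which is essentially the alternative you mention at the end: work downstairs on the coarse space $\aff^2/G$, push families forward along a generating sheaf (e.g.\ $\bigoplus_{g\in G}\Ocal(g)$), and realize the stacky functor inside a Quot scheme of the coarse space.

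You instead work upstairs on the atlas. Since $\eex$ is a global quotient by a finite group, closed substacks of $\eex_T$ are just $G$-stable subschemes of $\aff^2_T$. So $\Hilb_{\rho}(\eex)$ is cut out of $\Hilbs^n(\aff^2)$ by two conditions:
\begin{itemize}
\item a closed one, namely $G$-fixedness, which is closed because $\Hilbs^n(\aff^2)$ is separated;
\item an open-and-closed one, namely the ranks of the isotypic summands of the universal pushforward.
\end{itemize}
In characteristic $0$ those summands are images of idempotents, hence locally free on any base and compatible with base change. So your way around the integrality hypothesis of Proposition~\ref{GFlatLimit} is exactly right.

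Your route uses nothing beyond the classical Hilbert scheme of points and needs no generating sheaf or compactification. It extends directly to the generality of the Remark that follows the statement. It also gives extra structure: $\Hilbs^n(\aff^2)^G=\bigsqcup_{\dim\rho=n}\eex^{[\rho]}$ is a decomposition into open and closed pieces. Since the fixed-point scheme of a finite group acting on the smooth variety $\Hilbs^n(\aff^2)$ is smooth in characteristic $0$, this even gives smoothness of every $\eex^{[\rho]}$ without \cite{Maclagan_Smith_2010}, though not connectedness.

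What the Olsson--Starr approach buys in exchange is scope. It handles Quot functors of arbitrary coherent sheaves on tame DM stacks admitting a generating sheaf, not just zero-dimensional substacks, and it needs no presentation of the stack as a quotient of a scheme by a finite group.
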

The fine moduli space of $\Hilb_{\rho}(\eex)$ will be denoted $\eex^{[\rho]}$ 
and called the $G$-equivariant Hilbert scheme with respect to $\rho$, or 
more frequently as a Gilbert scheme of $\eex$.
\begin{remark}
  The proposition holds for all $k = \overline{k}$, finite and linearly 
  reductive $G$, and smooth quasiprojective $X$. In addition, every result of 
  this section has an analogue when $\aff^2$ is replaced either by a smooth 
  quasiprojective $X$ or by $\aff^n$ when the replacement is valid for the 
  result at hand.
\end{remark}

\subsection{A Comparison with Multigraded Hilbert Schemes}

There is a connection between Gilbert Schemes and 
Multigraded Hilbert Schemes as defined by Haiman and Sturmfels in 
\cite{Haiman_Sturmfels_2002}. Let $G$ act linearly on $\aff^2$
and denote the homogeneous linear polynomials by $\C[x,y]_1$.
It is easy to show there exists $x,y \in \C[x,y]_1 \simeq \rhostd$ which 
diagonalize the standard representation; These coordinates will be referred 
to as \emph{isotypic coordinates}. 
We will now recall some definitions from (loc. cit.) which will
prove to be useful later. To delineate between definitions and conventions 
some modifications to the existing notation will be employed:

$\bullet$ 
The collection of monomial ideals will be identified with $\N^2$ by the 
association $x^i y^j \leftrightarrow (i,j)$. A $G$-grading on $\C[x,y]$
will refer to a semigroup homomorphism $\deg: \N^2 \rightarrow G$ 
and associated grading $\deg_G(x^i y^j) = \deg(i,j)$. 
Given a $G$-grading there is an associated homogeneous decomposition 
\begin{equation*}
  \C[x,y] \simeq \bigoplus_{g \in G} \C[x,y]_g
\end{equation*}
where $\C[x,y]_g$ is the span of monomials $x^i y^j$ with 
$\deg_G(x^i y^j) = g$; Furthemore 
$\C[x,y]_g \cdot \C[x,y]_h \subset \C[x,y]_{gh}$. For a $\C$-algebra $A$ we 
may extend $\deg_G$ to $A[x,y]$ by setting $\deg_G(a) = 0$ for all $a \in A$.

$\bullet$ 
For a $\C$-algebra $A$, an ideal $I \subset A[x,y]$ which is homogeneous with 
respect to $\deg_G$ will be called admissible if for all $g \in G$ we have 
$\text{rank}(A[x,y]_g/I_g) < \infty$. The HS-polynomial (referred to as 
Hilbert polynomial in (loc. cit.)) of an admissible ideal is defined by 
$h_I(g) = \text{rank}(A[x,y]_g/I_g)$. 

$\bullet$ 
For a given function $h: G \rightarrow \N$, the 
sheaf $H_G^{h}: (\C-\text{alg})^{op} \rightarrow Set$ is defined by 
associating to a $\C$-algebra $A$ the collection of admissible ideals 
$I \subset A[x,y]$ such that $h_I = h$. By Theorem 1.1 of (loc. cit.), 
$H_G^h$ is represented by a quasiprojective scheme which we will denote by 
$\Hilbs_G^h$.

\medskip
Now that the necessary definitions have been discussed we will 
continue with the comparison to Gilbert schemes.
Fix a character isomorphism $\chi: G \simeq \Irr(G)$.
Given the action and isotypic coordinates it follows from the definitions 
that $\Span(m)$ is a $G$-stable subspace for each monomial $m \in \C[x,y]$ 
and hence there exists $g \in G$ such that $\Span(m) \simeq \chi(g)$. Next 
define the elements $g_x, g_y \in G$ by 
$\Span(x) \simeq \chi(g_x), \Span(y) \simeq \chi(g_y)$ for $x,y \in \C[x,y]$.
Define a $G$-grading by setting $\deg(x) = g_x, \deg(y) = g_y$ and 
extending this to the semigroup homomorphism $\deg_G: \N^2 \rightarrow G$.

By direct verification it follows that HS-polynomials are in a bijective 
correspondence with $\Rep(G)$ since the only degrees of freedom between both 
are the multiplicities of irreducible representations. 
Then the primary mechanism to translate between Gilbert Schemes and 
Multigraded Hilbert schemes is given in the following proposition. 
\begin{prop}
  Let $\rho \in \Rep(G)$ and $h_{\rho}$ be the corresponding Multigraded 
  Hilbert Polynomial. Then there is a natural isomorphism of 
  schemes $\eex^{[\rho]} \simeq \Hilbs_G^{h_{\rho}}$.
  \begin{proof}
    To begin, the functor $H_G^h$ is defined on $\C$-algebras which by 
    the $\Spec$ construction is equivalent to the category of affine 
    $\C$-schemes. As affine schemes form a base of the Zariski site 
    of $Sch/\C$ it follows $H_G^{h_{\rho}}$ may be extended to a sheaf; 
    $H_G^{h_{\rho}}$ will also be used to refer to the extension. 
    Clearly $\Hilbs_G^{h_{\rho}}$ still represents the extension. 
    As such to prove the claim it suffices to show there is a natural 
    isomorphism $\Hilb_{\rho}(T) \simeq H_G^{h_{\rho}}(T)$
    for an arbitrary affine $\C$-scheme $T = \Spec(A)$. Moreso as both 
    $\eex^{[\rho]}, \Hilbs_G^{h_{\rho}}$ are quasiprojective 
    schemes it also suffices to only consider when $A$ is noetherian.
    
    For a given morphism $f: T \to \Hilbs_G^{h_{\rho}}$ let $I$ denote the 
    family of ideals arising from the universal property of fine moduli spaces.
    Next form the isotypic decomposition $\rho = \sum_{g \in G} a_g \chi(g)$. 
    From the definitions we have 
    \begin{equation*}
      A[x,y]/I \simeq \bigoplus_{g \in G} A[x,y]_g/I_{g}
    \end{equation*}
    The assertion $h_{\rho}$ is constant on $\Spec(\C)$ when 
    extended to families amounts to the requirement 
    $$h_{\rho}((A[x,y]/I)_{g} \otimes_A m ) = a_g$$ 
    for all maximal $m \subset A$. Importantly as $G$ acts trivially on $A$ 
    it follows $\deg(a) = 0$ for all scalars so the isomorphism
    $(A[x,y]/I) \otimes_A m \simeq (A/m)[x,y]/I$
    is compatible with the group structure and the $G$-grading. Thus 
    there is an isomorphism of graded rings 
    \begin{equation*}
      (A[x,y]/I) \otimes_A m \simeq \bigoplus_{g \in G} (A/m)[x,y]_g/I_{g}
    \end{equation*}
    In total, the constancy of $h_{\rho}$ is equivalent to 
    requiring the length of $(A/m)[x,y]_g/I_{g}$ to be independent
    of the choice of maximal ideal $m$. 
    
    Set $\Xi = \Spec(A[x,y]/I)$
    and let $\mathfrak{Z}$ denote the corresponding family of stacks. 
    It is easy to verify that 
    being an admissible ideal implies $I$ is a $G$-ideal 
    with respect to the given action. Moreso as $\sum_i a_i$
    is equal to both the colength of $I_t$ and $\deg(\rho)$ it follows each 
    fiber of $I$ is a finite colength ideal. As $T$ is an integral 
    noetherian scheme we may use that $\Xi$ has constant representation 
    type and Proposition \ref{FlatnessCrit} to show 
    $\mathfrak{Z}$ is flat over $T$. As this is compatible with 
    base change the association $I \to \mathfrak{Z}$ defines a 
    functor of sheaves $H_G^{h_{\rho}} \rightarrow \Hilb_{\rho}$. 
    To finish up it suffices to note for a given flat family of substacks 
    $\mathfrak{Z}$ by first forming the associated family of schemes $\Xi$ 
    that $I(\Xi)$ is an admissible ideal with respect to the given 
    $G$-grading and this association is unique. As such the reverse 
    association $\mathfrak{Z} \rightarrow I$ defines an inverse morphism of 
    sheaves to the one constructed above. Finally by Yoneda's lemma there is 
    an induced isomorphism on the fine moduli spaces which is what we wished 
    to prove.
  \end{proof}
\end{prop}
As such we may freely use results pertaining to Multigraded 
Hilbert schemes, and most notably the lovely paper
\cite{Maclagan_Smith_2010}. In particular we obtain the following consequence 
of Theorem 1.1:
\begin{coro}
  $\eex^{[\rho]}$ is a smooth quasiprojective variety all $\rho \in \Rep(G)$.
\end{coro}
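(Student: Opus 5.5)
The corollary should follow by transporting Theorem 1.1 of \cite{Maclagan_Smith_2010} across the isomorphism of the preceding proposition. That theorem says: if $S = \C[x,y]$ is graded by an abelian group $A$ through a semigroup homomorphism $\deg\colon \N^2 \to A$, then the multigraded Hilbert scheme $\Hilbs_A^h$ is smooth and irreducible for every Hilbert function $h$. So the first step is to fix isotypic coordinates $x,y$ with $\Span(x) \simeq \chi(g_x)$ and $\Span(y) \simeq \chi(g_y)$. We then take the $G$-grading $\deg(i,j) = i g_x + j g_y$ on $\C[x,y]$ and check that it has the shape Maclagan--Smith require. This is immediate, since $\deg$ is a semigroup homomorphism $\N^2 \to G$ and $G$ is a finite abelian group.

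Next, for a given $\rho \in \Rep(G)$ with isotypic decomposition $\rho = \sum_g a_g \chi(g)$, let $h_\rho(g) = a_g$ be the associated HS-polynomial. Maclagan--Smith then give that $\Hilbs_G^{h_\rho}$ is smooth and irreducible. Haiman--Sturmfels (Theorem 1.1 of \cite{Haiman_Sturmfels_2002}) give that it is quasiprojective. Since $h_\rho$ has finite total sum $\deg\rho$, every admissible ideal with this Hilbert function has finite colength, so there is no issue about the ideals lying outside the finite colength range. The preceding proposition gives $\eex^{[\rho]} \simeq \Hilbs_G^{h_\rho}$ as schemes. Smoothness, irreducibility (hence, being reduced, integrality) and quasiprojectivity are all invariant under isomorphism, so $\eex^{[\rho]}$ is a smooth quasiprojective variety.

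The main obstacle is making sure the hypotheses of Maclagan--Smith are literally met. Their smoothness statement is for two variables with a grading by an arbitrary abelian group, and this is exactly where the restriction to $\aff^2$ and abelian $G$ enters. One must also handle the degenerate case where the Hilbert scheme is empty, that is, when no $G$-ideal has representation type $\rho$. I would treat that case separately: it is vacuously smooth, or it is excluded by convention when speaking of a variety. I would also handle the trivial case $\rho = 0$, where the Hilbert scheme is a point. Finally, one should note that irreducibility is needed to call the scheme a variety, and Maclagan--Smith supply it together with smoothness. Alternatively, connectedness plus smoothness would suffice.
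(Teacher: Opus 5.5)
Your proposal is correct and is the paper's own argument. The paper states the corollary as an immediate consequence of Theorem 1.1 of \cite{Maclagan_Smith_2010} (smoothness and irreducibility, with quasiprojectivity from Haiman--Sturmfels), transported across the isomorphism $\eex^{[\rho]} \simeq \Hilbs_G^{h_\rho}$ of the preceding proposition; your caveat about the case $\eex^{[\rho]} = \emptyset$ (e.g.\ $\rho \notin \Rep(G/H)$) is a fair point that the paper's ``for all $\rho$'' glosses over.
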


\subsection{Rigidification}

Let $\eex = [\aff^2/G]$, and let $H < G$ be the stabilizer group of the 
generic point of $\aff^2$. Recall that $\Irr(G) \simeq G^*$ so that by 
dualizing the canonical projection $G \rightarrow G/H$ there is an 
induced 
natural inclusion $\Irr(G/H) \subset \Irr(G)$. This may be extended 
linearly 
to define an inclusion $\Rep(G/H) \subset \Rep(G)$ in the obvious way. 
Next by regarding $H$ as a group scheme over $\Spec(\C)$ we may perform a 
rigidification of $\eex$; See section 5 of 
\cite{Abramovich2003}. Upon doing so 
it is easy to verify the stack $\eexgr = [\aff^2/(G/H)]$ is the rigidification 
of $\eex$. We will say $\eex$ is \textbf{rigid} if $\eex = \eexgr$.
For example if $\eex = [\aff^2/\mu_4]$ with the $(4;2,2)$ action then 
$\eexgr = [\aff^2/\mu_2]$ with the $(2;1,1)$ action.
There is always a direct comparison between a stack and its rigidification which may be performed, as in 
\cite{Abramovich2003}. As this is 
a simple case we may explicitly describe the morphism.
\begin{lemma}
    There is a natural morphism of stacks $\eex \rightarrow \eexgr$ which 
    is a homeomorphism on geometric points. 
    \begin{proof}
        As $\eexgr$ is a rigidification of $\eex$ included in this data 
        is a functorial morphism $F: \eex \rightarrow \eexgr$. More 
        concretely we will briefly describe its construction.
        To begin, let $T \in Sch/k$. As $G$ is a finite group it is 
        affine so in particular the fiber category $\eex(T)$ is equivalent to 
        the groupoid of principal $G$-bundles 
        \[\begin{tikzcd}
        	P & X \\
        	T
        	\arrow["\pi", from=1-1, to=1-2]
        	\arrow[from=1-1, to=2-1]
        \end{tikzcd}\]
        where $\pi$ is a $G$-equivariant morphism; Further discussion of this 
        may be found on pages 171-172 of \cite{Olsson_2016}. As $H$ acts 
        trivially with respect to the action on $\aff^2$ it is clear the 
        action on families is also trivial. As such we may meaningfully define 
        an action of $G/H$ on principal bundles by the declaration
        $gH \cdot P = g \cdot P$. From here it is easy to see $\pi$ is 
        also a $(G/H)$-equivariant morphism and $F$ is defined by 
        regarding $P$ as a $(G/H)$-bundle with the constructed action.

        Next let $S$ be the coarse moduli space of $\eex$ and $S_{\operatorname{rig}}$ the 
        coarse moduli space of $\eexgr$. As $G$ is finite and $\aff^2$ is 
        quasiprojective we have $S = \aff^2/G$ and $S_{\operatorname{rig}} = \aff^2/(G/H)$
        with their respective actions. Similar to the above argument there 
        is a morphism $f: S \rightarrow S_{\operatorname{rig}}$ and in fact $f$ is an 
        isomorphism. As there is a homeomorphism 
        between the geometric points of a DM stack and its coarse moduli 
        space the final claim follows from the commutativity of 
        \[\begin{tikzcd}
        	\eex & {\eexgr} \\
        	S & {S_{\operatorname{rig}}}
        	\arrow["F", from=1-1, to=1-2]
        	\arrow["q"', from=1-1, to=2-1]
        	\arrow["{q_{\operatorname{rig}}}", from=1-2, to=2-2]
        	\arrow["f"', from=2-1, to=2-2]
        \end{tikzcd}\]
        and the composition of homeomorphisms            
        $\eex \rightarrow S \xrightarrow{f} S_{\operatorname{rig}} \rightarrow \eexgr$.
    \end{proof}
\end{lemma}
Next suppose $\zz \subset \eex$ is a zero dimensional substack with 
associated $\xi \subset \aff^2$. With the above action we may 
regard $\xi$ as a $(G/H)$-subscheme. Moreso as the action of 
$H$ on $\C[x,y]$ is by trivial ring automorphisms we have
$\varrho(\zz) = \varrho(\xi) \in \Rep(G/H) \subset \Rep(G)$. As such if 
$\eex^{[\rho]} \neq \emptyset$ then necessarily $\rho \in \Rep(G/H)$.
The main result of this subsection is:
\begin{prop}
For all $\rho \in \Rep(G/H)$ there is an isomorphism
$P_{\operatorname{rig}}: \eex^{[\rho]} \simeq \eexgr^{[\rho]}$.
\begin{proof}
    More concretely, $P_{\operatorname{rig}}$ is given on $T$-valued 
    points $P \to T$ by regarding $P$ as a $(G/H)$-bundle in the 
    same manner as described above. An inverse can be 
    explicitly described. By the quotient $G \rightarrow G/H$, if $P$ is a 
    $(G/H)$-bundle under the constructed action clearly it may also 
    be regarded as a $G$-bundle. Moreso its representation type may be 
    calculated either with respect to $G$ or $G/H$, and this 
    association is functorial. Thus by a functor of points argument 
    there is an induced morphism 
    $Q_{\operatorname{rig}}: \eexgr^{[\rho]} \rightarrow \eex^{[\rho]}$. 
    To complete the proof we need only note from their constructions clearly 
    $P_{\operatorname{rig}} \circ Q_{\operatorname{rig}}$ and 
    $Q_{\operatorname{rig}} \circ P_{\operatorname{rig}}$ are the 
    identity morphism on their respective spaces as the underlying bundle 
    is unchanged by both maps.
\end{proof}
\end{prop}
If $\eex$ is rigid then necessarily $\Delta \neq \emptyset$ and in fact is a 
dense open substack. Recall if $\zz \subset \eex$ is a substack and 
$\xi \subset \aff^2$ is the corresponding subscheme then $\varrho(\zz)$ 
is the isomorphism class of $H^0(\xi)$ as a representation, and $\RR = \C[G]$.
\begin{prop}\label{DeltaRepCase}
  Suppose $\zz \subset \Delta$ is a zero dimensional substack. Then 
  $\varrho(\zz) \simeq m \RR$ for some $m \in \N$. 
  \begin{proof}
    Let $\xi \subset \aff^2$ denote the $G$-subscheme corresponding to $\zz$. 
    $\xi_{red}$ decomposes into a disjoint union of free and reduced 
    orbits, $p_i = \Orb_G(x_i)$, for some $x_i$. Let $\xi_i$ denote the 
    maximal subscheme of $\xi$ supported on $p_i$ so 
    $\xi = \bigsqcup_i \xi_i$. As the union is disjoint there is a 
    $G$-isomorphism
    \begin{equation*}
      H^0(\xi) \simeq \bigoplus_i H^0(\xi_i)
    \end{equation*}
    from which the claim follows from the special case of each summand. 
    Hence without loss of generality assume $\xi_{red} = \Orb_G(q)$ for some 
    reduced $q \in \aff^2$. Now consider the $G$-isomorphism 
    \begin{equation*}
      H^0(\xi) \simeq \bigoplus_{g \in G} H^0(g \cdot q)
    \end{equation*}
    There are $\# G$ distinct summands since q has a trivial stabilizer.
    If we set $m = h^0(q)$ this implies $h^0(\xi) = m \cdot \#G$.
    Next let $\{f_i\}_{i = 1}^m$ be a basis of $H^0(\xi)$ as a 
    k-vector space and define 
    $$R_i = \bigoplus_{g \in G} \Span(g \cdot f_i)$$ 
    As $g$ acts transitively on $\xi_{red} = Orb_G(q)$, each $R_i$ consists of
    $\# G$ distinct summands. 
    By the association $f_i \leftrightarrow 1 \in \RR$, a 
    dimension count, and comparison of the actions there is an 
    isomorphism of representations $R_i \simeq m_i \RR$ 
    for some $m_i \in \N$. As
    \begin{equation*}
      H^0(\xi) \simeq \bigoplus_{i = 1}^{m} R_i
    \end{equation*}
    and the inclusion of each summand is a $G$-linear map this 
    implies $H^0(\xi) \simeq m \RR$ as representations for 
    $m = \sum_i m_i$.
  \end{proof}
\end{prop}
Now assume $\Delta \neq \emptyset$ so that it constitutes a dense open 
substack of $\eex$. It can be verified that since $\Delta$ consists of 
geometric points with trivial stabilizer group that the restriction of the 
coarse moduli space $\pi: \eex \rightarrow S$ induces an isomorphism 
$\Delta \simeq \delta = \pi(\Delta)$. Thus there is an isomorphism of schemes 
$\Delta^{[m \RR]} \simeq \delta^{[m]}$, and this will be employed to give more
refined topological descriptions of $\eex^{[\rho]}$; See Section 
\ref{TopGilbSchemes}.

\subsection{Equivalence of Actions}

In order to simplify the analysis we will define an equivalence relation on 
actions. Let $G$ be a finite abelian group acting on $\aff^2$ by 
means of the homomorphisms $\sigma_1, \sigma_2: G \rightarrow GL(2)$, and 
let $\eex_i$ be the quotient stack corresponding to $\sigma_i$.
\begin{definition} 
  We will say $\sigma_1$ is equivalent to $\sigma_2$, denoted 
  $\sigma_1 \simeq \sigma_2$ if there exists an
  isomorphism $(\eex_1)_{\operatorname{rig}} \simeq (\eex_2)_{\operatorname{rig}}$. 
\end{definition}
One interest in equivalent actions comes from the following result.
\begin{prop}
  Let $\sigma_1, \sigma_2: G \rightarrow GL(2)$ be two actions 
  with corresponding quotient stacks $\eex_1, \eex_2$ such that there is 
  a representable isomorphism $(\eex_1)_{\operatorname{rig}} \simeq (\eex_2)_{\operatorname{rig}}$. Then for 
  all $\rho \in \Rep(G)$ there exists a natural isomorphism 
  $\eex_1^{[\rho]} \simeq \eex_2^{[\rho]}$.
  \begin{proof}
    To begin as $\sigma_1 \simeq \sigma_2$ there is a representable
    isomorphism $(\eex_1)_{\operatorname{rig}} \simeq (\eex_2)_{\operatorname{rig}}$ so for the sake 
    of simplicity there is a commutative diagram
    \[\begin{tikzcd}
      {\aff^2} & {\aff^2} \\
      {(\eex_1)_{\operatorname{rig}}} & {(\eex_{2})_{\operatorname{rig}}}
      \arrow["f", from=1-1, to=1-2]
      \arrow[from=1-1, to=2-1]
      \arrow[from=1-2, to=2-2]
      \arrow["F", from=2-1, to=2-2]
    \end{tikzcd}\]
    and in particular $f$ is a $G$-isomorphism; The following argument 
    is also valid if $\aff^2$ is replaced with different etale 
    covers of the rigidification. As such if 
    $\xi \in (\eex_1)_{\operatorname{rig}}^{[\rho]}$ then by pullback there is an 
    induced $G$-isomorphism $H^0(\xi) \simeq H^0(f(\xi))$
    and hence $\varrho(\xi) \simeq \varrho(f(\xi))$. As such we 
    obtain a morphism 
    $(\eex_1)_{\operatorname{rig}}^{[\rho]} \rightarrow (\eex_2)_{\operatorname{rig}}^{[\rho]}$
    which admits an inverse as $F$ is an isomorphism. The claim 
    then follows from the chain of isomorphisms 
    $\eex_1^{[\rho]} \simeq (\eex_1)_{\operatorname{rig}}^{[\rho]} \simeq (\eex_2)_{\operatorname{rig}}^{[\rho]} \simeq \eex_2^{[\rho]}$.
  \end{proof}
\end{prop}
On the other hand other interesting examples arise when the 
isomorphism is not representable.
\begin{prop}\label{AutoRemoval}
  Let $\sigma_1, \sigma_2: G \rightarrow GL(2)$ be two homomorphisms 
  and assume there exists an automorphism $\varphi \in \operatorname{Aut}(G)$
  such that $\sigma_2 = \sigma_1 \circ \varphi$. Then 
  $\sigma_1 \simeq \sigma_2$.
  \begin{proof}
    The key point is that $G$ is finite so that any automorphism 
    is given by a scheme theoretic isomorphism. More precisely 
    as $G$ is finite it may be represented by the group scheme 
    $\bigsqcup_{g \in G} C_g$ where $C_g \simeq \Spec(\C)$ for each 
    $g \in G$ and the group action is given in the usual way. 
    As each automorphism is uniquely determined as a permutation 
    of the components this shows $\varphi$ gives rise to an automorphism 
    of group schemes. Then the action on principal bundles is defined 
    according to the product $G \times P \rightarrow P$ so that $\varphi$
    induces an automorphism of principal bundles. Continuing on in 
    this fashion we may construct a morphism of fiber categories 
    $\eex_1(T) \rightarrow \eex_2(T)$ which evidently is compatible 
    with base change, and thus we obtain a morphism 
    $\eex_1 \rightarrow \eex_2$. As $\varphi$ is an automorphism in 
    turn the map of stacks admits an inverse and thus $\eex_1 \simeq \eex_2$
    from which it is trivial that $\sigma_1 \simeq \sigma_2$.
  \end{proof}
\end{prop}
In the case of cyclic actions the preceding proposition has the 
following consequence. In what follows for a given $(n;a,b)$ action assume 
$a, b \neq 0 \modn$. 
\begin{lemma}\label{UnitRemoval}
    For $u \in \Z/(n)^{\times}$, let 
    $\eex$ correspond to the $(n;a,b)$ action and $\eex'$ correspond 
    to the $(n;ua,ub)$ action. Then $\eex \simeq \eex'$.
\end{lemma}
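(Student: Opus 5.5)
The plan is to deduce the lemma directly from Proposition \ref{AutoRemoval}, applied with $G = \mu_n \simeq \Z/(n)$ and the automorphism of $G$ given by multiplication by the unit $u$. Let $\sigma_1 : \Z/(n) \rightarrow GL(2)$ be the homomorphism of the $(n;a,b)$ action, so $\sigma_1(i) = \operatorname{diag}(\zeta^{ai}, \zeta^{bi})$ in isotypic coordinates. Let $\sigma_2$ be the homomorphism of the $(n;ua,ub)$ action, so $\sigma_2(i) = \operatorname{diag}(\zeta^{uai}, \zeta^{ubi})$.

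First, define $\varphi : \Z/(n) \rightarrow \Z/(n)$ by $\varphi(i) = ui$. It is a group homomorphism, and since $u \in \Z/(n)^{\times}$ it has inverse $i \mapsto u^{-1} i$, so $\varphi \in \operatorname{Aut}(G)$. Second, check the identity $\sigma_2 = \sigma_1 \circ \varphi$:
\begin{equation*}
  \sigma_1(\varphi(i)) = \operatorname{diag}(\zeta^{a u i}, \zeta^{b u i}) = \sigma_2(i).
\end{equation*}
In the multiplicative language of $\mu_n$, this says that the $(n;ua,ub)$ action of $\zeta$ is the $(n;a,b)$ action of $\zeta^u$. The automorphism $\varphi$ is simply the relabelling $\zeta \mapsto \zeta^u$ of the chosen primitive root.

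By Proposition \ref{AutoRemoval}, these two steps give $\sigma_1 \simeq \sigma_2$. The proof of that proposition in fact produces an isomorphism of stacks $\eex_1 \rightarrow \eex_2$ itself, not merely of the rigidifications. It is induced by transporting principal $G$-bundles along $\varphi$, and the identity map on $\aff^2$ is then equivariant with respect to the twisted action. This yields $\eex \simeq \eex'$ as claimed.

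The main point to be careful about is exactly this distinction. The definition of $\simeq$ for actions only refers to the rigidifications, whereas the lemma asserts an isomorphism of the stacks themselves. So the step to check is that the proof of Proposition \ref{AutoRemoval} genuinely constructs $\eex_1 \simeq \eex_2$, which it does. As an independent check, one can also argue directly: $[\aff^2/G]$ depends only on the image subgroup and action, and the map $(\mathrm{id}_{\aff^2}, \varphi)$ is an isomorphism of action groupoids $G \times \aff^2 \rightrightarrows \aff^2$ between the two actions, hence induces an isomorphism of quotient stacks. The hypothesis $a, b \neq 0 \modn$ plays no role beyond ensuring that $ua, ub \neq 0 \modn$ as well, so that $\eex'$ lies in the same class of actions.
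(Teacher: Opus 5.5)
Your proposal is correct and follows the paper's route: the lemma is stated there as an immediate consequence of Proposition \ref{AutoRemoval}, applied to the automorphism $i \mapsto ui$ of $\Z/(n)$, and that proposition's proof does produce an isomorphism of the stacks themselves, not only of their rigidifications. One caution on your aside: a quotient stack does not in general depend only on the image subgroup in $GL(2)$ (compare the $(4;2,2)$ and $(2;1,1)$ actions), but your groupoid isomorphism $(\mathrm{id}_{\aff^2},\varphi)$ does not rely on that claim.
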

For the next simplification we will give a means to numerically perform a 
rigidification. 
\begin{lemma}\label{DivisorRemoval}
    Let $\eex$ be the quotient stack corresponding to the $(dn;da,db)$ 
    action. Furthermore assume that $\gcd(a,b,n) = 1$. Then $\eexgr$ 
    corresponds to the $(n;a,b)$ action.
    \begin{proof}
    As $\gcd(a,b,n) = 1$ by examination of the proof that rigidifications
    exist we can clearly see $H = (d) \subset \mu_{dn}$.
    \end{proof}
\end{lemma}
For more general actions, as $G$ is a finite abelian group it is the 
case $G \simeq \bigoplus_{i = 1}^k \Z/(d_i)$ where $d_i \mid d_{i + 1}$
for all admissible $i$. 
\begin{definition}
  Let $e_i$ denote a fixed choice of generator of the $i$-th summand of 
  $G$ as above and set $m_i = d_{i}/d_{i-1}$. The 
  $$(d_1; f_{1,1}, f_{2,1}) \oplus ... \oplus (d_k; f_{1,k}, f_{2,k})$$
  action refers to the action of $G$ given by 
  $e_i \cdot (x,y) = (\zeta_{d_i}^{f_{1,i}} x, \zeta_{d_i}^{f_{2,i}} y)$
  such that $\zeta_{d_i} = \zeta_{d_{i + 1}}^{m_{i + 1}}$.
\end{definition}
The last condition of the definition is strictly stronger than having 
$G$ act by diagonal matrices. In order to ensure actions of this form are 
sufficiently general we will prove:
\begin{lemma}\label{DiagEq}
  Let $\sigma: G \rightarrow GL(2)$ denote an arbitrary representation with 
  an induced action on $\aff^2$. Then there exists $f_{j,i}$ for $j = 1, 2$,
  $i = 1,...,k$ such that 
  \begin{equation*}
    \sigma \simeq (d_1;f_{1,1}, f_{2,1}) \oplus ... \oplus (d_k; f_{1,k}, f_{2,k})
  \end{equation*} 
  \begin{proof}
    Set $\sigma_i: \Z/(d_i) \subset G \rightarrow GL(2)$ where the 
    first arrow is given b $1 \rightarrow e_i$. As isotypic coordinates 
    exist since $G$ is abelian we may simultaneously diagonalize each 
    $\sigma_i$ individually so that 
    $\sigma_i \simeq (d_1;f_{1,i}', f_{2,i}')$ for some parameters. To 
    show the final condition of the definition is met we will argue on 
    induction on $k$. Clearly if $k = 1$ then the claim is true. 
    Now assume we have shown it for all groups with less than $k$ 
    summands, and set $G' \simeq \bigoplus_{i = 1}^{k - 1} \Z/(d_i)$
    with the usual inclusion $G' \subset G$. If $\sigma'$ is 
    the induced representation then by the inductive hypothesis 
    necessarily
    \begin{equation}
      \sigma' \simeq (d_1;f_{1,1}, f_{1,2}) \oplus ... \oplus (d_{k - 1}; f_{1,k-1}, f_{2,k-1})
    \end{equation}
    for some choice of parameters and associated primitive roots of unity 
    $\zeta_{d_i}$. 
    
    On the other hand assume $(d_k;f_{1,k}', f_{2,k}')$
    is given by $e_k \cdot (x,y) = (\omega^{f_{1,k}'} x, \omega^{f_{2,k}'}y)$
    where $\omega$ is a primitive $d_k$-th root of unity. As 
    $\zeta_{d_{k - 1}}$ is itself primitive necessarily there exists 
    primitive $\zeta_{d_{k}}$ such that 
    $\zeta_{d_{k}}^{d_{k}/d_{k - 1}} = \zeta_{d_{k - 1}}$. Now observe 
    there exists a unit $u \in \Z/(d_k)$ such that $\omega = \zeta_{d_k}^u$,
    and set $f_{k,i} = u f_{k,i}'$. At each step we have only applied 
    automorphisms of $G$ in such a way that we obtain equivalent 
    actions. Thus by Lemma \ref{AutoRemoval} we obtain an 
    isomorphism of the corresponding quotient stacks at each step and hence 
    on induction we have proven
    $$\sigma \simeq (d_1;f_{1,1}, f_{2,1}) \oplus ... \oplus (d_k; f_{1,k}, f_{2,k})$$
  \end{proof}
\end{lemma}
An additional lemma will be required before stating the desired reduction for 
this paper and main result of this subsection.
\begin{lemma}
    Suppose $G \simeq \bigoplus_{i = 1}^{k} \Z/(d_i)$ with 
    $d_1 \neq 0$ and $d_i \mid d_{i + 1}$ for all $i$. Furthermore 
    assume $G$ acts linearly on $\aff^2$ and let $H \subset G$ denote the 
    stabilizer group of the generic point. If $k > 2$ then $H \neq 0$. 
    \begin{proof}
    Let $\sigma: G \rightarrow GL(2)$ denote the representation 
    corresponding to the given action. By the previous lemma we know 
    \begin{equation*}
      \sigma \simeq (d_1;f_{1,1},f_{2,1}) \oplus ... \oplus (d_k; f_{1,k}, f_{2,k})
    \end{equation*}
    for some choice of parameters. As we have in fact proven the 
    corresponding stacks are isomorphic we may prove the claim with the 
    latter action. As such we will calculate 
    \begin{equation*}
      (r_1 e_1 + ... + r_k e_k) \cdot (x, y) = (\zeta^{\frac{d_k}{d_1} r_1 p_{1,1} + ... + r_k p_{1,k}} x, \zeta^{\frac{d_k}{d_1} r_1 p_{2,1} + ... + r_k p_{2,k}} y)
    \end{equation*}
    The given element stabilizes $(x,y) \in \aff^2$ iff both exponents 
    are zero modulo $d_k$. As such elements of $H$ are in a bijective 
    correspondence with the kernel of 
    \begin{equation*}
      \begin{pmatrix}
        \frac{d_k}{d_1} p_{1,1} & ... & p_{1,k}\\
        \frac{d_k}{d_1} p_{2,1} & ... & p_{2,k}\\ 
      \end{pmatrix}
      \in \text{Hom}_{\Z}(\Z/(d_k)^{\oplus k},\Z/(d_k)^{\oplus 2})
    \end{equation*}
    As $k \geq 3$ by a counting argument the kernel must be nontrivial so 
    that $H \neq 0$.
    \end{proof}
\end{lemma}
The main result of this section is then given by 
\begin{prop}\label{CycDoubGenRed}
  Let $\sigma: G \rightarrow GL(2)$ be an arbitrary representation 
  such that associated action on $\aff^2$ is not the trivial action. Then 
  either $\sigma \simeq (n; a,b)$ or 
  $\sigma \simeq (m; a,b) \oplus (n;c,d)$ for some choice of parameters.
  \begin{proof}
    By Lemma \ref{DiagEq} we have 
    \begin{equation*}
      \sigma \simeq (d_1;f_{1,1}, f_{2,1}) \oplus ... \oplus (d_k; f_{1,k}, f_{2,k})
    \end{equation*}
    for some choice of parameters. By the previous lemma if $k > 2$ then 
    necessarily some element of $G$ stabilizes $\aff^2$. Thus we may pass to 
    the rigidification $[\aff^2/(G/H)]$. As an additional consequence of the 
    previous lemma necessarily $G/H$ is generated by at most two summands in 
    Smith normal form. Moreso as $G$ does not have the trivial action 
    we have $G \neq H$ and hence $G/H \neq 0$. If $G/H$ is cyclic then the 
    first claim holds. If $G/H$ is generated by two elements the second 
    claim holds.
  \end{proof}
\end{prop}
Thus in order to study the Gilbert schemes of abelian planar stacks 
it instead suffices to only consider actions of the form
$(n;a,b)$ or $(m;a,b) \oplus (n;c,d)$. As both cases will be 
analyzed we will say actions of the latter form are 
\textbf{doubly generated}.

\section{Tangent Spaces of Gilbert Schemes}

The principal focus of this section is to collect various results on 
dimensions of tangent spaces of $\eex^{[\rho]}$. The starting point 
for these considerations is a more general calculation. Throughout this 
section $\eex$ is an abelian planar stack.

\subsection{Grothendieck-Riemann-Roch For DM Stacks}

An intersection theory has been developed by Vistoli in \cite{Vistoli_1989}
for algebraic stacks of finite type over a field which has enabled the 
formulation of a Grothendieck-Riemann-Roch theorem for DM stacks in 
\cite{Toen_1998}. Recall the inertia stack is defined by the fiber square
\[\begin{tikzcd}
	{I \eex} & \eex \\
	\eex & {\eex \times_{\Spec(\C)} \eex}
	\arrow["q", from=1-1, to=1-2]
	\arrow[from=1-1, to=2-1]
	\arrow["\Delta", from=1-2, to=2-2]
	\arrow["\Delta", from=2-1, to=2-2]
\end{tikzcd}\]
For the sake of completeness we will recall some definitions which will be 
used in what follows. The following are taken primarily from (loc. cit.)
although some are standard within the field and will not attributed to a 
specific source. 

As $\eex$ is a tame quotient stack of $\aff^2$ it has the resolution 
property, that is every coherent sheaf admits a finite locally free 
resolution. Let $K(\eex)$ denote the Grothendieck group of locally free 
sheaves, or equivalently of coherent sheaves by the resolution property. 
We may decompose a locally free sheaf $V$ over $I \eex$ into 
eigensheaves $V \simeq \bigoplus_{\lambda \in \mu_{\infty}} V^{(\lambda)}$,
where $\mu_{\infty}$ denotes the algebraic group of roots of unity over $\C$
and $V^{(\lambda)}$ is the eigen-subsheaf whose eigenvalue is equal to 
$\lambda$. Next set 
$K(I \eex)_{\Lambda} = K(I \eex) \otimes_{\Q} \Q[\mu_{\infty}]$
and define the operation 
$\rho: K(I \eex) \rightarrow K(I \eex)_{\Lambda}$ on vector bundles by
\begin{equation*}
  \rho([V]) = \sum_{\zeta \in \mu_{\infty}} \zeta \cdot [V^{(\zeta)}]
\end{equation*}
and extending linearly; We will refer to this as the \emph{eigenvalue map}.
\begin{definition}
  The augmented Chern character is the composition of operators
  \begin{equation*}
    K(\eex)_{\Lambda} \xrightarrow{q^*} K(I \eex)_{\Lambda} 
    \xrightarrow{\rho} K(I \eex)_{\Lambda} \xrightarrow{\operatorname{Ch}} H^*(I \eex)
  \end{equation*}
  where $\operatorname{Ch}$ is the Chern character on the etale site of $I \eex$ and 
  is defined in a similar way to the usual Chern character.
\end{definition}
The following definitions originate from (loc. cit.), but the form which 
will be used comes from Appendix A of \cite{Tseng_2010}. 

The operator $\lambda_{-1}: K(I \eex) \rightarrow K(I \eex)$ is given on 
locally free sheaves by the equation 
\begin{equation*}
  \lambda_{-1}([V]) = \sum_{i = 0}^{\infty} (-1)^i [\Lambda^i V]
\end{equation*}
where $\Lambda^i V$ is the $i$-th exterior power of $V$, and extended 
linearly to $K(I\eex)$. For a locally free sheaf $V$ define the moving part by
\begin{equation*}
  V^{mov} = \bigoplus_{1 \neq \zeta \in \mu_{\infty}} V^{(\zeta)}
\end{equation*}
and the invariant part is $V^{inv} = V^{(1)}$. 
\begin{definition}
  The augmented Todd class of $\eex$ is given by
  \begin{equation*}
    \Td(V) = \frac{\operatorname{Td}((q^*V)^{inv})}{\operatorname{Ch}\big(\rho \circ \lambda_{-1}(\big[(q^*V)^{mov}\big]^{\vee}) \big)}
  \end{equation*}
\end{definition}
With the augmented Chern character and Todd class defined, the main result 
we will use is Corollary 4.13 of \cite{Toen_1998}:
\begin{theorem}
    Assume $\mathcal{Y}$ is a tame, proper, and smooth Deligne-Mumford stack 
    such that the coarse moduli space is a quasiprojective scheme. Let $F$ be 
    a coherent sheaf. Then 
    \begin{equation*}
        \chi(\mathcal{Y}, F) = \int_{I\mathcal{Y}} \Ch(F) \Td(\mathcal{Y})
    \end{equation*}
    where the terms above are the augmented Chern and Todd classes 
    respectively.
\end{theorem}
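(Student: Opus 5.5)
Since the statement is Corollary 4.13 of \cite{Toen_1998}, in the paper we would simply invoke it. If we wanted to see why it holds, at least for the stacks used here, the plan is to reduce to the equivariant (Lefschetz) Riemann--Roch theorem for finite groups. Toën's general proof instead builds a Riemann--Roch transformation on the étale site with coefficients twisted by characters of the inertia groups; we do not reproduce that.

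\textbf{Step 1: reduce to a global quotient.} First assume $\mathcal{Y} = [X/G]$ with $G$ finite and $X$ a smooth projective scheme carrying a $G$-action. Tameness is automatic over $\C$. A coherent sheaf $F$ on $\mathcal{Y}$ is then a $G$-equivariant sheaf on $X$, and exactness of invariants gives
\begin{equation*}
  \chi(\mathcal{Y},F) = \sum_i (-1)^i \dim H^i(X,F)^G = \frac{1}{\#G}\sum_{g \in G} \sum_i (-1)^i \operatorname{tr}\big(g \mid H^i(X,F)\big).
\end{equation*}

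\textbf{Step 2: apply holomorphic Lefschetz to each $g$.} For each $g$ we use the holomorphic Lefschetz formula (Atiyah--Singer, or Donovan in the algebraic setting). It writes the trace term as
\begin{equation*}
  \int_{X^g} \frac{\operatorname{Ch}\big(\rho(F|_{X^g})\big)\,\operatorname{Td}(X^g)}{\operatorname{Ch}\big(\rho\,\lambda_{-1}(N_{X^g/X}^{\vee})\big)}.
\end{equation*}
Here $\rho$ is exactly the eigenvalue map defined above, applied to the $g$-eigendecomposition. The normal bundle $N_{X^g/X}$ is precisely the moving part of $T_X|_{X^g}$, and $T_{X^g}$ is its invariant part.

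\textbf{Step 3: match with the inertia stack.} The inertia stack is $I\mathcal{Y} = \bigsqcup_{(g)} [X^g/C(g)]$, with the disjoint union over conjugacy classes. Grouping the $g$'s by conjugacy class, the factor $\frac{1}{\#G}$ combines with the orbit size $\#G/\#C(g)$ to give $\frac{1}{\#C(g)}$. This is exactly the degree normalization of $\int_{[X^g/C(g)]}$. Comparing the integrands with the definitions of $\Ch$ and $\Td$ pulled back along $q$ then identifies the result with $\int_{I\mathcal{Y}} \Ch(F)\Td(\mathcal{Y})$.

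\textbf{Step 4: the general case.} For a general tame proper smooth DM stack with quasiprojective coarse space, we would first use that $\mathcal{Y}$ is étale locally (over the coarse space) a quotient $[U/G_x]$. Both sides are additive in $F$ and functorial under proper pushforward. Two routes are possible:
\begin{itemize}
  \item present $\mathcal{Y}$ as a global quotient $[X/GL_n]$ and use equivariant Riemann--Roch;
  \item glue the local identities using Toën's Riemann--Roch transformation.
\end{itemize}

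\textbf{Expected obstacle.} This last step is the one we expect to be hardest, since the local Lefschetz computations have to be compatible with the cohomology of $I\mathcal{Y}$. Here Toën's formalism is essential and we would cite it rather than reprove it. A second point needs care when the theorem is used: $[\aff^2/G]$ is not proper. So in applications we would compactify, for instance to $[\p^2/G]$, or use equivariant localization with compactly supported sheaves such as $\Ocal_{\zz}$. Both sides then make sense and agree, because the classes involved are supported at finitely many points.
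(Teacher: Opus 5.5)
The paper gives no argument for this statement beyond citing Corollary 4.13 of \cite{Toen_1998}, which is also your first sentence, so at the level of the paper you agree with it. Your Steps 1--3 add something the paper does not have: a genuinely different and more elementary proof for $\mathcal{Y} = [X/G]$ with $G$ finite and $X$ smooth projective. You average traces over $G$, apply the Lefschetz--Riemann--Roch formula to each $g$, and regroup by conjugacy classes, so that $\frac{1}{\#G}\cdot\frac{\#G}{\#C(g)}$ becomes the factor $\frac{1}{\#C(g)}$ normalizing $\int_{[X^g/C(g)]}$.

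This argument is correct, and it is all the paper actually uses. The compactification invoked in Section 3 is $[\p^2/G]$ for the linearly extended action, which is a finite global quotient. Since $G$ is abelian, every centralizer is $G$, which is exactly where the factor $\frac{1}{\#G}$ attached to the zero-dimensional gerbes $\CC_g$ in Section 4 comes from. The same trace argument applied to $\Ext^i_X(F_1,F_2)^G$, via equivariant locally free resolutions, also gives the two-sheaf corollary directly.

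What the citation buys is full generality, covering stacks not presented as finite quotients, but the paper never needs it. What your route buys is transparency about where each factor in the Section 4 computation comes from.

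One point you should state precisely. In Step 2, $F|_{X^g}$ must be the derived restriction, that is, the restriction of a $G$-equivariant locally free resolution taken in $K$-theory; this is what $q^*$ on $K(\mathcal{Y})$ means in the definition of $\Ch$. With the naive restriction the formula fails for torsion sheaves. For $\Ocal_0$ and $g$ with an isolated fixed point, the trace side is $1$. The factor $(1-\chi_g(-g_x))(1-\chi_g(-g_y))$ that cancels the Todd denominator in the paper's computation only appears after restricting the Koszul resolution.
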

In what follows define 
$$\ext^i(F,G) = \dim(\Ext^i(F,G))$$ 
and $\chi(F,G) = \sum_{i = 0}^{\infty} (-1)^i \ext^i(F,G)$. Note that with the 
same assumptions on $\eey$ as above necessarily its projective dimension is 
finite and in fact equal to $\dim(\eey)$. As such $\ext^i(F,G) = 0$
for all $i > \dim(\eey)$ and thus the above formula is always well defined.
Next, by passing from the sheaves $F,G$ to locally free resolutions in the 
bounded derived category 
$\textbf{F}^{\bullet}, \textbf{G}^{\bullet} \in D^b(\eey)$ we may argue as in 
Lemma 6.1.3 of \cite{Huybrechts_Lehn_2010} to prove the useful consequence
\begin{coro}\label{TwoSheafGRR}
    With the same assumptions on $\eey$, if $F,G \in \Coh(\eey)$ then
    \begin{equation*}
        \chi(F,G) = \int_{I \eey} \Ch(G) \Ch(F^{\vee}) \Td(\eey)
    \end{equation*}
\end{coro}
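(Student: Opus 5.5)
The plan is to reduce the two-sheaf statement to the one-sheaf statement of Corollary 4.13 of \cite{Toen_1998}, following the argument of Lemma 6.1.3 of \cite{Huybrechts_Lehn_2010}.

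\textbf{Step 1: resolve both sheaves.} Since $\eey$ is a tame quotient-type DM stack, it has the resolution property. Choose finite locally free resolutions $\textbf{F}^{\bullet} \to F$ and $\textbf{G}^{\bullet} \to G$; these are finite because $\eey$ is smooth. Then $\Ext^i(F,G)$ is computed as the hypercohomology of the internal Hom complex
\[
R\mathcal{H}om(F,G) \simeq \mathcal{H}om^{\bullet}(\textbf{F}^{\bullet}, \textbf{G}^{\bullet}) \simeq (\textbf{F}^{\bullet})^{\vee} \otimes \textbf{G}^{\bullet},
\]
which is a bounded complex of locally free sheaves.

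\textbf{Step 2: pass to a $K$-theoretic identity.} Because $\chi$ is additive on bounded complexes,
\[
\chi(F,G) = \sum_{p,q} (-1)^{p+q}\, \chi\big(\eey, (\textbf{F}^{p})^{\vee} \otimes \textbf{G}^{q}\big).
\]
The right-hand side depends only on the class $[F^{\vee}]\cdot[G] \in K(\eey)$, where $[F^{\vee}] := \sum_p (-1)^p [(\textbf{F}^{p})^{\vee}]$. This class is well defined, independent of the chosen resolution, because dualization is exact on locally free sheaves.

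\textbf{Step 3: apply GRR termwise.} Apply Corollary 4.13 of \cite{Toen_1998} to each locally free sheaf $(\textbf{F}^{p})^{\vee}\otimes\textbf{G}^{q}$. The theorem is stated for coherent $F$, and locally free sheaves are coherent, so it applies. By linearity we get
\[
\chi(F,G) = \int_{I\eey} \Ch\big([F^{\vee}][G]\big)\, \Td(\eey).
\]

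\textbf{Step 4: multiplicativity of $\Ch$ (the main check).} It remains to show that $\Ch$ is a ring homomorphism $K(\eey) \to H^*(I\eey)$, which gives $\Ch([F^{\vee}][G]) = \Ch(F^{\vee})\Ch(G)$. This splits into three parts:
\begin{itemize}
\item[(a)] $q^*$ is a ring map, since pullback commutes with tensor products.
\item[(b)] The eigenvalue map $\rho$ is multiplicative. On the inertia stack, the tautological automorphism acts on $V\otimes W$ with eigenspaces $(V\otimes W)^{(\lambda)} = \bigoplus_{\alpha\beta=\lambda} V^{(\alpha)}\otimes W^{(\beta)}$. Therefore
\[
\sum_{\lambda} \lambda\,[(V\otimes W)^{(\lambda)}] = \Big(\sum_{\alpha} \alpha\,[V^{(\alpha)}]\Big)\Big(\sum_{\beta} \beta\,[W^{(\beta)}]\Big).
\]
\item[(c)] The ordinary Chern character $\operatorname{Ch}$ on $I\eey$ is multiplicative, exactly as in the scheme case via the splitting principle.
\end{itemize}
Part (b) is the only point that is not identical to \cite{Huybrechts_Lehn_2010}. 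It needs the eigen-decomposition to be compatible with tensor products, which holds because the eigensheaf decomposition is defined locally on $I\eey$, where the stabilizer acts diagonalizably; this uses tameness.

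Combining Steps 3 and 4 yields the stated formula.
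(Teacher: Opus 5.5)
Your proposal is correct and is essentially the paper's own argument. The paper's proof is only the remark that one passes to finite locally free resolutions $\textbf{F}^{\bullet},\textbf{G}^{\bullet}$ and argues as in Lemma 6.1.3 of \cite{Huybrechts_Lehn_2010}, and you fill in those details: additivity of $\chi$ over $(\textbf{F}^{\bullet})^{\vee}\otimes\textbf{G}^{\bullet}$, well-definedness of $[F]^{\vee}$ in $K(\eey)$, and multiplicativity of $\Ch$ via the eigenvalue map on $I\eey$. The one step you assert without justification is the resolution property for a general $\eey$ under these hypotheses, which are not literally ``quotient-type''; it does hold, because smooth DM stacks over $\C$ with quasiprojective coarse moduli space are global quotients (Kresch--Vistoli), and the paper relies on the same fact tacitly.
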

Fix a choice of $\zz \in \eex^{[\rho]}$. 
As the action is linear it extends to a linear action on $\p^n$ and hence
there is a projective stacky compactification of $\eex$. Moreso as $G$ is 
finite and the field of definition is $\C$ it follows the compactification is 
also a tame DM stack. As $\Ocal_{\zz}$ is a compactly supported sheaf it 
follows both of the above results above may be used to calculate 
$\chi(\Ocal_{\zz}, \Ocal_{\zz})$ by passing to the compactification and 
restricting via functoriality on some open substack. With this in mind, 
the remainder of this section consist of an analysis of the equality 
\begin{equation*}
    \chi(\Ocal_{\zz}, \Ocal_{\zz}) = \int_{I \eex} \Ch(\Ocal_{\zz}^{\vee}) \Ch(\Ocal_{\zz}) \Td(\eex)
\end{equation*}

\subsection{The Euler Form of Abelian Planar Stacks}
Recall the natural equivalence of categories 
$\Coh(\eex) \simeq \fgMod_{\C[x,y]}^G$ and fix a character isomorphism 
$\chi: G \simeq \Irr(G)$. Let $\Ocal(g) \in \Coh(\eex)$ denote the sheaf
which corresponds to $\C[x,y] \otimes \chi(g)$ where $\C[x,y]$ is given the 
trivial action of $G$. For $F \in \Coh(\eex)$ define 
$F(g) := F \otimes \chi(g)$. Next fix a substack $\zz \subset \eex$, set 
$\rho = \varrho(\zz)$, and form the isotypic decomposition 
$\rho \simeq \sum_{g \in G} a_g \chi(g)$.
By the additivity of Euler characteristics of coherent sheaves we may pass to 
$K(\eex)$. Upon doing so there is an equality of classes 
\begin{equation}\label{SubstackExpand}
  [\Ocal_{\zz}] = \sum_{g} a_g[\Ocal_0(g)]
\end{equation}
where $\Ocal_0$ corresponds to $\C \in \fgMod_{\C[x,y]}^G$ and is endowed 
with the trivial action. Already one observation may be made on 
$\chi(\Ocal_{\zz}, \Ocal_{\zz})$. 
\begin{prop}\label{EulerFormExist}
  $\chi(\Ocal_{\zz}, \Ocal_{\zz})$ is an integral quadratic form in the 
  coefficients $\{a_g\}$. In particular it depends only on 
  $\rho$.
  \begin{proof}
    The key point is that $\chi(\Ocal_{\zz}, \Ocal_{\zz})$ is bilinear 
    in its components. Upon using equation (\ref{SubstackExpand})
    it can be calculated 
    \begin{equation*}
      \chi(\Ocal_{\zz}, \Ocal_{\zz}) = \sum_{g,h} a_g a_h \chi(\Ocal_0(g), \Ocal_0(h))
    \end{equation*}
    so the simplified Euler characteristics are the coefficients of 
    the form. Integrality follows from the equality
    $\chi(F,G) = \sum_{i=0}^{\infty} (-1)^i \ext^i(F,G)$
    and that each ext term in the sum is an integer.
  \end{proof}
\end{prop}
As a consequence of the proposition we can naturally associate to $\eex$ a 
quadratic form:
\begin{definition}
  Let $\eex$ be an abelian planar stack. The Euler form of $\eex$ is the 
  quadratic form $Q_{\eex}(\{a_g\})$ with the property that if 
  $\zz \in \eex^{[\rho]}$ and $\rho = \sum_{g \in G} a_g \chi(g)$ 
  then $Q_{\eex}(\{a_g\}) = \chi(\Ocal_{\zz}, \Ocal_{\zz})$.
\end{definition}
Often $Q_{\eex}(\rho)$ will be used in lieu of $Q_{\eex}(\{a_g\})$.
Of course as defined $Q_{\eex}$ may be extended to also accept any 
coherent sheaf with zero dimensional support. One point we will return to is 
that the property $Q_{\eex}(\{a_g\}) = \chi(\Ocal_{\zz}, \Ocal_{\zz})$ does 
not uniquely define $Q_{\eex}$ unless $\eex$ is rigid.

\subsection{Tangent Space Dimensions}
 
Recall with the given action of $G$ we have defined $g_x,g_y \in G$
by the isomorphism of representations
$\Span(x) \simeq \chi(g_x), \Span(y) \simeq \chi(g_y)$ for $x,y \in \C[x,y]$.
This subsection will consist of calculating the terms of the following sum
\begin{equation*}
  \chi(\Ocal_{\zz}, \Ocal_{\zz}) = \sum_{i = 0}^{\infty} (-1)^i \ext^i(\Ocal_{\zz}, \Ocal_{\zz})
\end{equation*}
To begin, as $\eex$ is a smooth DM stack in turn it may be verified its 
projective dimension is equal to $\dim(\eex) = 2$. In particular,
$\ext^i(\Ocal_{\zz}, \Ocal_{\zz}) = 0$ when $i > 2$ and thus
\begin{equation*}
  Q_{\eex}(\rho) = \ext^0(\Ocal_{\zz},\Ocal_{\zz}) - \ext^1(\Ocal_{\zz},\Ocal_{\zz}) + \ext^2(\Ocal_{\zz},\Ocal_{\zz})
\end{equation*}
We will now calculate the remaining terms.
For $i = 0$ there is an isomorphism 
$\Ext^0(\Ocal_{\zz}, \Ocal_{\zz}) \simeq H^0(\zz, \Ocal_{\zz})$
so $\ext^0(\Ocal_{\zz}, \Ocal_{\zz}) = h^0(\Ocal_{\zz})$. By comparison 
it follows $\ext^0(\Ocal_{\zz}, \Ocal_{\zz}) = a_0$.
For $i = 1$, we have:
\begin{lemma}
    $\ext^1(\Ocal_{\zz}, \Ocal_{\zz}) = \dim(T_{\zz} \eex^{[\rho]})$
    \begin{proof}
        We will sketch the proof. Let $I_{\zz} \subset \Ocal_{\eex}$ denote 
        the ideal sheaf of $\zz$ and form the exact sequence
        \begin{equation*}
          0 \rightarrow I_{\zz} \rightarrow \Ocal_{\eex} \rightarrow \Ocal_{\zz} \rightarrow 0
        \end{equation*}
        By the induced long exact sequence of Ext we have 
        $\Ext^1(\Ocal_{\zz}, \Ocal_{\zz}) \simeq \text{Hom}_{\Ocal_{\eex}}(I_{\zz}, \Ocal_{\zz})$ where we have used Ext vanishes on locally free 
        sheaves and 
        $$\text{Hom}_{\Ocal_{\eex}}(\Ocal_{\zz}, \Ocal_{\zz}) \simeq \text{Hom}_{\Ocal_{\eex}}(\Ocal_{\eex}, \Ocal_{\zz})$$ 
        From here the remainder of the proof follows from the isomorphism 
        $T_{\zz} \eex^{[\rho]} \simeq \text{Hom}_{\Ocal_{\eex}}(\Ocal_{\eex}, \Ocal_{\zz})$ which is proven by recreating the 
        argument of Section 2.2 of \cite{Huybrechts_Lehn_2010}.
    \end{proof}
\end{lemma}
For $i = 2$, let $R = \C[x,y]$ and note that the cotangent bundle of $\eex$ 
corresponds to the module $R[dx,dy]$. As we have defined the elements 
$g_x,g_y$ by the isomorphisms of representations $\Span(x) \simeq \chi(g_x)$,
$\Span(y) \simeq \chi(g_y)$ by rules of differentials we have 
$d(g \cdot x) = \chi_{g_x}(g) dx, d(g \cdot y) = \chi_{g_y}(g) dy$.
From this it is clear the canonical bundle satisfies $\omega \simeq \Ocal(c)$
where $c = g_x + g_y$. Due to this $c$ will be referred to as the 
\emph{canonical element} of the action. 
\begin{lemma}
  $\ext^2(\Ocal_{\zz}, \Ocal_{\zz}) = a_{-c}$
    \begin{proof}
        We will sketch the proof. There is a Serre Duality for derived 
        categories, and moreso it is compatible with the group action. Thus 
        we will calculate
        \begin{equation*}
          \begin{split}
            \Ext^2(\Ocal_{\zz}, \Ocal_{\zz}) 
            &= \Ext^0(\Ocal_{\zz}, \Ocal_{\zz} \otimes \omega)^{\vee} = \text{Hom}_{\Ocal_{\eex}}(\Ocal_{\zz}, \Ocal_{\zz}(c))^{\vee}\\
          \end{split}
        \end{equation*}
        where the last equality follows from 
        $\text{Ext}^0 = \text{Hom}_{\Ocal_{\eex}}$ and the isomorphism
        $\omega_{\eex} \simeq \Ocal(c)$. It may be verified there is 
        an isomorphism $\text{Hom}_{\Ocal_{\eex}}(\Ocal_{\zz}, \Ocal_{\zz}(c)) \simeq H^0(\Ocal_{\zz}(c))$.
        Ultimately we only care about the dimensions of these vector spaces 
        and thus we will avoid dualizing. Then global sections of any coherent 
        sheaf on $\eex$ are in a bijective correspondence with the invariant 
        global sections of the lift to $\aff^2$. As 
        $\omega \simeq \Ocal(c)$, upon comparison with the definition 
        of $a_{-c}$ it follows 
        \begin{equation*}
            \ext^2(\Ocal_{\zz}, \Ocal_{\zz}) = \dim\big((\rho \otimes \chi(c))^G\big) = a_{-c}
        \end{equation*}
    \end{proof}
\end{lemma}
These terms may then be substituted into the alternating 
sum expression for $Q_{\eex}$. Upon doing so we may rearrange the terms 
to prove:
\begin{equation*}
  t_{\eex}(\rho) = \dim(T_{\zz} \eex^{[\rho]}) = a_0 + a_{-c} - Q_{\eex}(\rho)
\end{equation*}
Note that this implies the dimension of tangent spaces for abelian planar 
stacks is necessarily constant as the right hand side depends 
only on $\rho$; This gives a stack theoretic interpretation of the 
smoothness of planar Multigraded Hilbert Schemes as in Theorem 1.1 of 
\cite{Maclagan_Smith_2010}. Then by smoothness with either proof 
we have the following dimension formula:
\begin{prop}\label{ConsTanDim}
  For all $\rho \in \Rep(G)$ such that $\eex^{[\rho]} \neq \emptyset$ we have 
  \begin{equation*}
    \dim(\eex^{[\rho]}) = a_0 + a_{-c} - Q_{\eex}(\rho)
  \end{equation*}
\end{prop}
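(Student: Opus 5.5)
The plan is to combine the tangent space formula just derived with the smoothness of $\eex^{[\rho]}$. Fix $\rho \in \Rep(G)$ with $\eex^{[\rho]} \neq \emptyset$ and choose an arbitrary closed point $\zz \in \eex^{[\rho]}$. The three Ext lemmas above give
\begin{equation*}
  Q_{\eex}(\rho) = \chi(\Ocal_{\zz},\Ocal_{\zz}) = a_0 - \dim(T_{\zz}\eex^{[\rho]}) + a_{-c}.
\end{equation*}
Here $\ext^i$ vanishes for $i>2$ because $\eex$ has dimension $2$. Proposition \ref{EulerFormExist} shows the left side depends only on $\rho$, so $\dim(T_{\zz}\eex^{[\rho]}) = a_0 + a_{-c} - Q_{\eex}(\rho)$ for every closed point $\zz$.

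Next I pass from tangent spaces to the dimension of the scheme. The Corollary following the comparison with multigraded Hilbert schemes states that $\eex^{[\rho]}$ is smooth; this is Theorem 1.1 of \cite{Maclagan_Smith_2010} transported through the isomorphism $\eex^{[\rho]} \simeq \Hilbs_G^{h_\rho}$. For a smooth scheme of finite type over $\C$, the local dimension at each closed point equals the dimension of the Zariski tangent space there. So every irreducible component, and every connected component, of $\eex^{[\rho]}$ has dimension $a_0 + a_{-c} - Q_{\eex}(\rho)$, and therefore so does $\eex^{[\rho]}$ itself.

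One could instead avoid citing Maclagan--Smith. Since the tangent dimension $t_{\eex}(\rho)$ is constant on the reduced scheme underlying $\eex^{[\rho]}$, generic smoothness of that reduced scheme shows its dimension equals $t_{\eex}(\rho)$ at a general point. Constancy of tangent dimension alone does not rule out nonreducedness, however, so I will rely on the cited smoothness.

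The main obstacle is that the preceding lemmas were only sketched, in particular the applicability of To\"en's GRR and Serre duality on a non-proper stack. I will need $\chi(\Ocal_{\zz},\Ocal_{\zz})$ to be well defined and computed by the three Ext terms. I would justify this by noting that $\Ocal_{\zz}$ has proper support, so each Ext group is finite dimensional. These groups can be computed on the projective stacky compactification, because Ext between compactly supported sheaves is local around the support. Serre duality there, with $\omega \simeq \Ocal(c)$ near the support, then gives the $a_{-c}$ term. No further computation is needed, since only the identity of Ext dimensions enters the proof.
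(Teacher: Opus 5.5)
Your proposal is correct and is essentially the paper's argument: the three Ext computations give $\dim T_{\zz}\eex^{[\rho]} = a_0 + a_{-c} - Q_{\eex}(\rho)$ at every closed point, and smoothness of $\eex^{[\rho]}$ (Theorem 1.1 of \cite{Maclagan_Smith_2010}, transported via $\eex^{[\rho]} \simeq \Hilbs_G^{h_{\rho}}$) turns this into the dimension formula. You are right to discard the generic-smoothness aside, and in fact it is flawed as stated: the tangent spaces of the reduced scheme can be strictly smaller than $T_{\zz}\eex^{[\rho]}$, so constancy of $t_{\eex}(\rho)$ does not transfer to it, which is exactly why the cited smoothness is genuinely needed.
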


\subsection{Euler Forms of Planar DM Stacks}

Although of theoretical interest, it is preferable to be able to give 
a more explicit description of Euler forms. The first observation is 
\begin{lemma}
  If $\eex$ is rigid and $\eex^{[\rho]} \neq \emptyset$ then
  $t_{\eex}(\rho + \RR) = t_{\eex}(\rho) + 2$
  \begin{proof} 
    By assumption there exists $\zz \in \eex^{[\rho]}$ and as $\eex$ is 
    rigid in addition there exists a simple geometric 
    point $p \in \Delta$ not contained in $\zz$. Let $\xi$ be the subscheme 
    corresponding to $\zz$ and $f$ the subscheme corresponding to $p$ 
    and note from the set up both are disjoint subschemes. Now as $p$ 
    is a nonstacky point in turn $\varrho(f) \simeq \RR$ by Proposition 
    \ref{DeltaRepCase}, and by disjointness we have 
    $\varrho(\xi \sqcup f) \simeq \varrho(\xi) + \varrho(f) = \rho + \RR$.
    Thus there is the equality 
    $t_{\eex}(\rho + \RR) = \dim(T_{\zz \sqcup p} \eex^{[\rho + \RR]})$.
    As shown above there is an isomorphism of groups 
    \begin{equation*}
      T_{\zz \sqcup p} \eex^{[\rho + \RR]} \simeq \text{Hom}^G(I(\xi \sqcup f), H^0(\xi \sqcup f))
    \end{equation*}
    where the Hom is taken as $\C[x,y]$-modules. By disjointness of the 
    subschemes in turn
    \begin{equation*}
      \text{Hom}^G(I(\xi \sqcup f), H^0(\xi \sqcup f))
      \simeq \text{Hom}^G(I(\xi), H^0(\xi)) \oplus \text{Hom}^G(I(f), H^0(f))
    \end{equation*}
    By another application of the tangent vector characterization we can 
    see
    \begin{equation*}
        T_{\zz \sqcup p} \eex^{[\rho + \RR]} \simeq T_{\zz} \eex^{[\rho]} 
        \oplus T_{p} \eex^{[\RR]}
    \end{equation*}
    so that by taking dimensions we obtain the equality 
    $t_{\eex}(\rho + \RR) = t_{\eex}(\rho) + \dim(T_{p} \eex^{[\RR]})$.
    To handle the remaining term we will use that $\Delta$ is open in 
    $\eex$ to the dimension may be calculated with the tangent space of 
    $\Delta$ at $p$. With the comments following Proposition 
    \ref{DeltaRepCase} it was also shown
    $\Delta^{[\RR]} \simeq \delta$
    for an open subscheme of $\aff^2/G$ consisting of smooth points. In total 
    this implies the last tangent space is two dimensional and thus
    $t_{\eex}(\rho + \RR) = t_{\eex}(\rho) + 2$.
  \end{proof}
\end{lemma}
Now substitute $\rho + \RR$ into Proposition \ref{ConsTanDim} to show
\begin{equation*}
  \begin{split}
    t_{\eex}(\rho + \RR) 
    &= (a_0 + 1) + (a_{-c} + 1) - Q_{\eex}(\rho + \RR)\\ 
    &= a_0 + a_{-c} - Q_{\eex}(\rho + \RR) + 2 \\
  \end{split}
\end{equation*}
On the other hand, the lemma implies
\begin{equation*}
  \begin{split}
    t_{\eex}(\rho + \RR) 
    &= t_{\eex}(\rho) + 2 = a_0 + a_{-c} - Q_{\eex}(\rho) + 2 \\
  \end{split}
\end{equation*}
so that by comparison we have
\begin{prop}\label{QCons}
  If $\eex^{[\rho]} \neq \emptyset$ then 
  $Q_{\eex}(\rho + \RR) = Q_{\eex}(\rho)$
\end{prop}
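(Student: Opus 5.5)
The plan is to derive the identity by computing $t_{\eex}(\rho + \RR)$ in two independent ways and comparing. I will work under the standing hypothesis of the preceding lemma that $\eex$ is rigid. This is no loss of generality for the geometric content: if $\eex^{[\rho]} \neq \emptyset$ then $\rho \in \Rep(G/H)$, and $\eex^{[\rho]} \simeq \eexgr^{[\rho]}$ by the rigidification proposition. The general case is addressed at the end.

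First, I check that $\eex^{[\rho + \RR]} \neq \emptyset$, so that Proposition \ref{ConsTanDim} applies to $\rho + \RR$. Choose $\zz \in \eex^{[\rho]}$ and a geometric point $p \in \Delta$ outside the support of $\zz$; such a $p$ exists because $\Delta$ is dense open and $\zz$ is zero dimensional. By Proposition \ref{DeltaRepCase} the orbit $f$ corresponding to $p$ has $\varrho(f) \simeq \RR$, and by disjointness $\zz \sqcup p \in \eex^{[\rho + \RR]}$. Since every irreducible representation occurs in $\RR$ with multiplicity one, the coefficients of $\rho + \RR$ are $a_g + 1$. In particular $a_0$ and $a_{-c}$ each increase by one. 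Proposition \ref{ConsTanDim} then gives
\begin{equation*}
  t_{\eex}(\rho + \RR) = a_0 + a_{-c} + 2 - Q_{\eex}(\rho + \RR).
\end{equation*}

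Second, the preceding lemma gives $t_{\eex}(\rho + \RR) = t_{\eex}(\rho) + 2$. Applying Proposition \ref{ConsTanDim} to $\rho$ rewrites this as
\begin{equation*}
  t_{\eex}(\rho + \RR) = a_0 + a_{-c} - Q_{\eex}(\rho) + 2.
\end{equation*}
Equating the two expressions and cancelling $a_0 + a_{-c} + 2$ yields $Q_{\eex}(\rho + \RR) = Q_{\eex}(\rho)$.

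The main obstacle is the non-rigid case, where $\RR \notin \Rep(G/H)$ and the geometric argument says nothing about $\eex^{[\rho + \RR]}$. For this case I would argue directly with the bilinear form from Proposition \ref{EulerFormExist}:
\begin{equation*}
  Q_{\eex}(\rho + \RR) = Q_{\eex}(\rho) + 2\chi(\rho, \RR) + Q_{\eex}(\RR),
\end{equation*}
which holds after checking that $\chi(\Ocal_0(g), \Ocal_0(h))$ is symmetric by Serre duality. It then remains to show that $\chi(\Ocal_{\zz}, [\Ocal_0 \otimes \RR])$ vanishes. Here $[\Ocal_0 \otimes \RR] = \sum_g [\Ocal_0(g)]$. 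I would identify this class in $K(\eex)$ with the class of the structure sheaf of a free orbit, using a $G$-flat family of free orbits specializing to the origin; by Proposition \ref{GFlatLimit} this family has constant representation type $\RR$. Since the free orbit can be chosen disjoint from $\zz$, all $\Ext$ groups with $\Ocal_{\zz}$ vanish, and the same holds for $\chi(\RR, \RR)$. The delicate point I expect is ensuring that such a free orbit exists when $H \neq 0$. In that case I would instead use the action of $G/H$ together with the restriction $\Rep(G) \to \Rep(G/H)$ to check that the class splits as $\#H$ copies of the rigid regular class.
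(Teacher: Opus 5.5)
For rigid $\eex$ your argument is the paper's own. You compute $t_{\eex}(\rho+\RR)$ once from Proposition \ref{ConsTanDim} applied to $\rho+\RR$, after checking $\zz\sqcup p\in\eex^{[\rho+\RR]}$ (the paper does this check inside the preceding lemma), and once from that lemma, then cancel. The paper proves nothing beyond this. Its lemma assumes rigidity, so the statement is implicitly about rigid $\eex$. In the non-rigid case you must also fix which form $Q_{\eex}$ means, since $\eex^{[\rho+\RR]}=\emptyset$ and the paper remarks that $Q_{\eex}$ is then not determined by its defining property. Your expansion tacitly uses the canonical form $\sum_{g,h}a_ga_h\chi(\Ocal_0(g),\Ocal_0(h))$.

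Your non-rigid addendum contains two errors, both repairable.

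First, the pairing $\chi(\Ocal_0(g),\Ocal_0(h))$ is not symmetric. Serre duality gives $\chi(F,G)=\chi(G,F\otimes\omega)$, which is a twist by $c$, not a symmetry. For the $(3;1,1)$ action with $g_x=g_y=1$, the Koszul complex gives $\chi(\Ocal_0,\Ocal_0(1))=-2$ but $\chi(\Ocal_0(1),\Ocal_0)=1$. You only need $\chi(\rho,\RR)+\chi(\RR,\rho)=0$, though, and your disjointness argument kills both cross terms.

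Second, when $H\neq 0$ there are no free orbits at all, since $\Delta=\emptyset$. Also, $\C[G]$ is not $\#H$ copies of $\C[G/H]$ in $\Rep(G)$. It is $\sum_{\psi}\tilde\psi\otimes\C[G/H]$, summed over lifts $\tilde\psi$ of the characters $\psi$ of $H$, so you would need twisted generic-orbit classes.

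It is simpler to bypass orbits entirely. In $K(\eex)$ one has $[\Ocal_0\otimes\RR]=([\Ocal_{\eex}]-[L_x])([\Ocal_{\eex}]-[L_y])[\Ocal_{\eex}\otimes\RR]=0$, because $\chi(g)\otimes\RR\simeq\RR$ for every $g$. Moreover, $\chi(F,-)$ and $\chi(-,F)$ are additive when $F$ is zero-dimensional. Hence all cross terms and $Q_{\eex}(\RR)$ vanish for every action, rigid or not. This argument makes no appeal to constancy of dimension, and it works verbatim on $[\aff^n/G]$.
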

Next note $0 \in \eex^{[\chi(0)]}$ so that 
$Q_{\eex}(\chi(0) + \RR) = Q_{\eex}(\chi(0))$. On its own this is not 
interesting however we will observe the following property of 
Euler characteristics.
\begin{lemma}\label{OrthoRelation}
  Let $\eex = [\aff^2/G]$. Then for all $[F_1],[F_2] \in K(\eex)$ and 
  $g \in G$ we have 
  \begin{equation*}
    \chi([F_1(g)], [F_2(g)]) = \chi([F_1],[F_2])
  \end{equation*}
  \begin{proof}
    We know that 
    $\chi([F_1],[F_2]) = \sum_{i=0}^{\infty} (-1)^i \ext^i(F_1,F_2)$, and 
    this sum is always finite. Then by properties of Ext functors we have 
    \begin{equation*}
      \text{Ext}^i(F_1(g),F_2(g)) \simeq \text{Ext}^i(F_1,F_2(g) \otimes \Ocal(-g)) \simeq \text{Ext}^i(F_1,F_2)
    \end{equation*}
    for all $i$.
  \end{proof}
\end{lemma}
Now consider the vector space $\R[G]$ endowed with the canonical basis 
$\{e_g\}_{g \in G}$ and let $\pi_g$ denote the projection to the $g$-th 
coordinate. In this way the Euler characteristic induces a bilinear form on 
$\R[G]$ by setting 
\begin{equation*}
  \innprod{v}{w}_{\eex} = \sum_{g,h \in G} \pi_g(v) \pi_g(w) \chi([\Ocal(g)], [\Ocal(h)])
\end{equation*}
Now we may prove an orthogonality relation for Euler forms:
\begin{prop}
  $Q_{\eex}(g \cdot v) = Q_{\eex}(v)$ for all $g \in G$ and $v \in \R[G]$.
  \begin{proof}
    By the previous proposition we have 
    $\innprod{g \cdot v}{g \cdot w}_{\eex} = \innprod{v}{w}_{\eex}$
    and hence the left permutation representation on $\R[G]$ acts by orthogonal
    matrices. By similarly extending $Q_{\eex}$ to $\R[G]$ it is clear the 
    Euler form is the quadratic form associated to $\innprod{-}{-}_{\eex}$. 
    Hence by the above orthogonality of the left regular representation we 
    have $Q_{\eex}(g \cdot v) = Q_{\eex}(v)$ for all $v \in \R[G]$. 
  \end{proof}
\end{prop}
\begin{remark}
  In fact since the proof of the lemma does not depend on $\eex$ 
  being planar, this shows Euler forms of stacks of the 
  form $[\aff^n/G]$ with $G$ abelian and $n$ arbitrary also satisfy
  $Q_{\eex}(g \cdot \rho) = Q_{\eex}(\rho)$. Moreso there is an 
  appropriate generalization for when $G$ is nonabelian if we further 
  restrict to $g \in Z(G)$.
\end{remark}
We will prove one last functional equation of $Q_{\eex}$ which extends 
Proposition \ref{QCons} to all of $\R[G]$; Note that unlike the orthogonality 
property, this does use constancy of dimension in an essential way and hence 
is currently unique to planar stacks.
\begin{lemma}
  Set $w_G = \sum_{g \in G} e_g$. Then 
  $Q_{\eex}(v + w_G) = Q_{\eex}(v)$ for all $v \in \R[G]$.
  \begin{proof}
    By Lemma \ref{QCons} we have 
    $Q_{\eex}(\chi(0) + \RR) = Q_{\eex}(\chi(0))$. In terms of vectors of 
    $\R[G]$ the previous equality implies 
    $Q_{\eex}(e_0 + w_G) = Q_{\eex}(e_0)$ since $e_0$ corresponds 
    to the sheaf $[\Ocal(0)]$.  On the other hand 
    observe $g \cdot e_0 = e_g$ and $g \cdot (e_0 + w_G) = e_g + w_G$
    since $v_G$ is a simultaneous eigenvector of the action with an 
    eigenvalue of 1. Thus $Q_{\eex}(g \cdot v) = Q_{\eex}(v)$
    implies $Q_{\eex}(e_g + w_G) = Q_{\eex}(e_g)$. On induction this 
    implies $Q_{\eex}(e_g + n \cdot w_G) = Q_{\eex}(e_g)$ for all $n \in \N$.

    Next let $A$ denote the symmetric matrix associated to $Q_{\eex}$.
    By direct calculation we find for any choice of $v$ that 
    $$Q_{\eex}(v + w_G) = (v + w_G)^T A (v + w_G) = v^T A v + 2v^T A w_G + w_G^T A w_G$$
    Note $\ell(v) = 2 v^T A w_G$ is linear in $v$. Next substituting 
    $v = e_g + n \cdot w_G$ yields
    $$Q_{\eex}(e_g + n \cdot w_G) = e_g^T A e_g + 2ne_g^T A w_G + n^2 (w_G^T A w_G)$$
    As $Q_{\eex}(e_g + n \cdot w_G) = Q_{\eex}(e_g) = e_g^T A e_g$ this 
    implies 
    $2n e_g^T A w_G + n^2 (w_G^T A w_G) = 0$ for each $n \in \N$. As the 
    equation holds for $n = 1, 2$ in turn this implies 
    \begin{equation*}
      \begin{split}
        0 
        &= (4 e_g^T A w_G + 4 w_G^T A w_G) - 2 \cdot (2 e_g^T A w_G + w_G^T A w_G)\\
        &= 2 w_G^T A w_G\\
      \end{split}
    \end{equation*}
    and in turn $\ell(e_g) = 2e_g^T A w_G = 0$ for each $g \in G$. As 
    $\ell$ is linear and zero on a basis in turn $\ell(v) = 0$ for all 
    $v$. Finally we have already shown $w_G^T A w_G = 0$ so that 
    upon substituting these into the above equations we have 
    $$Q_{\eex}(v + w_G) = v^T A v = Q_{\eex}(v)$$
  \end{proof}
\end{lemma}
As $Q_{\eex}$ is a quadratic form this is a strong condition to possess;
The following result is due to Sudheer Shukla.
\begin{lemma}\label{SudheersLemma}
    Let $V = \R^d$ be a vector space whose canonical basis is $\{e_i\}$,
    and set $\overline{1} = (1,...,1)$. Let $v = (a_1,...,a_{d}) \in V$ and
    let $Q$ be an integral quadratic form such that 
    $Q(v + \overline{1}) = Q(v)$ for all $v$. Then there exists 
    $b_{i,j} \in \frac{1}{2} \Z$ such that
    \begin{equation*}
        Q(a_1,..., a_n) = \sum_{i < j} b_{i,j}(a_i - a_j)^2
    \end{equation*}
    \begin{proof}
        Let $A$ denote the matrix associated to $Q$ with respect to 
        $\{e_i\}$. The condition of the hypothesis amounts to the equality
        \begin{equation*}
            v^T A ~v = (v + \overline{1})^T A ~ (v + \overline{1}) = v^T A ~ v + 2 v^T A  ~ \overline{1}+ \overline{1}^T A ~ \overline{1}
        \end{equation*}
        from which we can deduce 
        $2 v^T A ~ \overline{1} + \overline{1}^T A ~\overline{1} = 0$.
        As this holds for all $v$ necessarily 
        $v^T A ~ \overline{1}, \overline{1}^T A ~ \overline{1} = 0$.
        By sampling $e_i$ in the first expression we can 
        see that each row sum of $A$ must be equal to 0, from which the 
        second equality is implied as $A \cdot \overline{1}$ is equal to the 
        vector of row sums. In fact the opposite direction holds as 
        well so the the hypothesis is satisfied iff each row sum of $A$ is 
        equal to 0.

        Next write $A = (b_{i,j})$ and observe 
        \begin{equation*}
        \begin{split}
            \sum_{i < j} -b_{i,j}(a_i - a_j)^2 
            &= \sum_{i = 1}^n \bigg( \sum_{j \neq i} -b_{i,j} a_i^2 + \sum_{i \neq j} 2 b_{i,j} a_i a_j\bigg)\\
            &= \sum_{i = 1}^n \bigg( b_{i,i} a_i^2 + \sum_{i \neq j} 2 b_{i,j} a_i a_j\bigg) = Q(a_1,...,a_n)\\
        \end{split}
        \end{equation*}
        where we have used $\sum_{i = 1}^n b_{i,j} = 0$ in the last equality.
        To see the claim on the coefficients recall $Q_{\eex}$ is an 
        integral quadratic form so that $b_{i,j} \in \frac{1}{2}\Z$.
    \end{proof}
\end{lemma}
Putting everything together we have proven
\begin{prop}\label{CorpulentEulerForm}
Suppose $\eex$ is rigid, and let $D = (G^2 - \Delta)/S_2$ denote the 
set of unordered pairs of distinct elements of $G$. Then there exists 
$b_{g,h} \in \frac{1}{2} \Z$ such that if $\rho \in \Rep(G)$ and
$\rho \simeq \sum_{g \in G} a_g \chi(g)$ then
\begin{equation*}
  Q_{\eex}(\rho) = \sum_{(g,h) \in D} b_{g,h} (a_g - a_h)^2
\end{equation*}
\end{prop}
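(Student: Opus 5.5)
The plan is to assemble the functional equations already established for $Q_{\eex}$ and then feed them into Lemma \ref{SudheersLemma}. First, by Proposition \ref{EulerFormExist}, $\chi(\Ocal_{\zz},\Ocal_{\zz})$ is an integral quadratic form in the multiplicities $\{a_g\}_{g\in G}$. The explicit formula is
\begin{equation*}
  Q_{\eex}(\{a_g\}) = \sum_{g,h} a_g a_h\, \chi(\Ocal_0(g),\Ocal_0(h)),
\end{equation*}
whose coefficients are integers that do not depend on $\zz$. Using this formula, I extend $Q_{\eex}$ to all of $\R[G] \simeq \R^{\#G}$ via the basis $\{e_g\}$. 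The symmetric matrix $A$ with entries $\frac{1}{2}\big(\chi(\Ocal_0(g),\Ocal_0(h)) + \chi(\Ocal_0(h),\Ocal_0(g))\big)$ has entries in $\frac12\Z$ and diagonal entries in $\Z$. Because $\eex$ is rigid, this extension is the canonical one: the form is determined by the Euler pairing on $K(\eex)$, and no ambiguity arises.

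Second, I invoke the preceding lemma, which gives $Q_{\eex}(v + w_G) = Q_{\eex}(v)$ for every $v \in \R[G]$, where $w_G = \sum_g e_g$. This is exactly the hypothesis $Q(v+\overline{1}) = Q(v)$ of Lemma \ref{SudheersLemma} with $d = \#G$. That lemma's proof shows the hypothesis forces every row sum of $A$ to vanish. It then rewrites $Q$ as $\sum_{i<j} b_{i,j}(a_i-a_j)^2$, where $b_{i,j}$ equals minus the off-diagonal entry of $A$, so $b_{i,j} \in \frac12\Z$.

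Third, I translate the indexing. The unordered pairs $i<j$ of basis indices correspond bijectively to $D = (G^2-\Delta)/S_2$. Here $\Delta$ is the diagonal, so this use differs from the open substack $\Delta$ of $\eex$. Each term $b_{g,h}(a_g-a_h)^2$ is symmetric in $g,h$, so it is well defined on an unordered pair. Finally, I specialize $v$ to the isotypic multiplicity vector of $\rho$, which gives the stated formula for $Q_{\eex}(\rho)$.

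The main obstacle is making sure the extension to $\R[G]$ is legitimate. The identity $Q(v+w_G)=Q(v)$ was derived from Proposition \ref{QCons}, which only covers $\rho$ with $\eex^{[\rho]}\neq\emptyset$, together with orthogonality and a polynomial identity argument. I would check that this derivation uses only the base points $e_g$, which satisfy $e_0 \in \eex^{[\chi(0)]}$ and are translated by $G$. The remaining steps are purely algebraic consequences of bilinearity, so rigidity is needed only to guarantee $\Delta\neq\emptyset$ in the tangent-space lemma that feeds Proposition \ref{QCons}.
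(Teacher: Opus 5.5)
Your proposal is correct and is essentially the paper's own argument: the paper obtains this proposition by combining the integrality from Proposition \ref{EulerFormExist}, the invariance $Q_{\eex}(v+w_G)=Q_{\eex}(v)$ (derived from Proposition \ref{QCons}, the $G$-orthogonality of the Euler pairing, and the $n=1,2$ polynomial trick), and Lemma \ref{SudheersLemma}, exactly as you do. Your explicit symmetrization of $\chi(\Ocal_0(g),\Ocal_0(h))$, the identification $b_{g,h}=-A_{g,h}\in\frac12\mathbb{Z}$, and your observation that rigidity enters only through the tangent-space lemma feeding Proposition \ref{QCons} are all accurate; the only superfluous point is your claim that rigidity makes the extension to $\R[G]$ canonical, since the extension via the Euler pairing on $K(\eex)$ is canonical in any case.
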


\section{Calculation of Euler Forms}

Recall that we have shown if $\rho \in \Rep(G)$ and if $H$ is the stabilizer 
group of $\aff^2$ then $\eex^{[\rho]} \neq \emptyset$ implies 
$\rho \in \Rep(G/H)$ with respect to the canonical inclusion 
$\Rep(G/H) \subset \Rep(G)$, and 
$\eex^{[\rho]} \simeq \eexgr^{[\rho]}$ for all $\rho \in \Rep(G/H)$.
Moreso we have also calculated the dimension formula 
\begin{equation*}
  \dim(\eex^{[\rho]}) = a_0 + a_{-c} - Q_{\eex}(\rho)
\end{equation*}
Thus to obtain complete information on $\dim(\eex^{[\rho]})$ it 
suffices to calculate $Q_{\eex}$. As such this section will consist of the 
providing an explicit equation for the coefficients of the Euler form.

\subsection{The GRR Integral Calculation}

For the moment assume $G$ is a finite abelian group acting linearly on 
$\aff^2$ and let $\eex = [\aff^2/G]$ be the corresponding quotient stack.
We will begin by setting up the calculation of 
the GRR integral of \cite{Toen_1998}, but first we will make some remarks on 
notation and conventions for the calculation.

$\bullet$ 
In what follows we will make implicit use of the natural equivalence 
of categories $\Coh(\eex) \simeq \fgMod_{\C[x,y]}^G$ to define sequences in 
terms of $G$-modules of $\C[x,y]$ and pass to associated sequences of sheaves 
on $\eex$. $K(\eex)$ denotes the Grothendieck group of vector bundles. 
$[F]^{\vee}$ refers to the ``dualizing" operation on $K(\eex)$ which is 
defined on vector bundles by $[V]^{\vee} = [V^{\vee}]$ and defined on 
the whole of $K(\eex)$ by extending linearly. Define 
$\II := \Ch([\Ocal_{\zz}]^{\vee}) \Ch([\Ocal_{\zz}]) \Td(\eex)$.

$\bullet$ 
Fix a choice of character isomorphism $\chi: G \simeq \Irr(G)$. Recall 
$g_x \in G$ is defined by $\Span(x) \simeq \chi(g_x)$, and similarly 
for $g_y \in G$. $c = g_x + g_y$ is the canonical element of the action, and
$[L_x], [L_y] \in K$ denote the classes of the $x$ and $y$-axis endowed 
with the tangent action so $[L_x] = [\Ocal(-g_x)], [L_y] = [\Ocal(-g_y)]$. 

Now we will return to the calculation:
Let $\rho \in \Rep(G)$. In $K(\eex)$ for each $\zz \in \eex^{[\rho]}$ there is 
an equality 
\begin{equation*}
  [\Ocal_{\zz}] = \sum_{g \in G} a_g [\Ocal_0(g)]
\end{equation*} 
For $R = \C[x,y]$ recall the Koszul resolution 
\begin{equation*}
    0 \rightarrow \bigwedge^2 R^{2} \rightarrow \bigwedge^1 R^{2} \rightarrow R \rightarrow R/(x,y) \rightarrow 0
\end{equation*}
As the sequence is $G$-equivariant it gives rise to a resolution of sheaves 
on $\eex$,
\begin{equation*}
    0 \rightarrow L_x \otimes L_y \rightarrow L_x \oplus L_y \rightarrow \Ocal_{\eex} \rightarrow \Ocal_0 \rightarrow 0
\end{equation*}
Thus upon passing to $K$ we obtain
\begin{equation}\label{SimplePtClass}
    [\Ocal_0] = [\Ocal_{\eex}] - [L_x] - [L_y] + [L_x]\cdot[L_y]
\end{equation}
In order to proceed, we will take a moment to discuss the inertia stack.
Recall $I \eex = \bigsqcup_{g \in G} \CC_g$ where 
$\CC_g = [\Fix(\aff^2,g)/G]$. Via the disjoint union above there is a 
decomposition
\begin{equation*}
    A^*(I\eex) = \bigoplus_{g \in G} A^*(\CC_g)
\end{equation*}
Use $\pi_g: A^*(I \eex) \rightarrow A^*(\CC_i)$ to denote the projection maps
and $f_g = \pi_g(f)$ for $f \in A^*(I \eex)$. At this point a simplification 
to the analysis may be performed by reducing to zero dimensional components 
of $I\eex$. Assume $\dim(\CC_g) =  1$ and let 
$X = c_1(\pi_g^*([L_x])), Y = c_1(\pi^*([L_y]))$. Formally there is the 
equality
\begin{equation*}
    1 - e^X - e^Y+ e^{X+Y} = XY + \text{ higher order terms}
\end{equation*}
Then by equation \ref{SimplePtClass} and using that augmented Chern characters 
are ring homomorphisms it follows 
As such $\Ch([\Ocal_{\zz}]), \Ch([\Ocal_{\zz}]^{\vee})$ are at least quadratic 
in $X,Y$. As $\dim(\CC_g) = 1$ this implies the contribution of 
$\int_{\CC_g} \II_g = 0$ and the same holds true if $\dim(\CC_g) = 2$. 
As such we can see the only non-negligible contributions to the integral 
are from those $\CC_g$ such that $\dim(\CC_g) = 0$. 

The next step is to use $[\Ocal_{\zz}] = \sum_g a_g [\Ocal_0(g)]$ to give a 
similar expansion for $[\Ocal_{\zz}^{\vee}]$ with the aid of the following 
lemma.
\begin{lemma}
    For all $g \in G$ we have $[\Ocal_0(g)]^{\vee} = [\Ocal_0(c - g)]$.
    \begin{proof}
        Consider the Koszul resolution 
        \begin{equation*}
            0 \rightarrow L_x \otimes L_y \rightarrow L_x \oplus L_y \rightarrow \Ocal_{\eex} \rightarrow \Ocal_0 \rightarrow 0
        \end{equation*}
        which will be regarded as a complex. As all terms to the left of 
        $\Ocal_0$ are free modules, to ensure exactness 
        of the dual sequence it suffices to check exactness only at 
        $\Ocal_{\eex}$. However exactness at this location is met since 
        $\Ocal_{\eex}$ is an invertible sheaf, the dual map is 
        nonzero, and $\Ocal_{0}^{\vee} = 0$. Following this, explicitly write 
        out the dual sequence and tensor by 
        $L_x \otimes L_y \simeq \Ocal(-c)$ to obtain the exact sequence
        \begin{equation*}
            0 \rightarrow \Ocal_0^{\vee}(-c) \rightarrow L_x \otimes L_y \rightarrow L_y \oplus L_x \rightarrow \Ocal_{\eex} \rightarrow  0
        \end{equation*}
        As this is still exact, passing to the $K$ shows 
        \begin{equation*}
            [\Ocal_0]^{\vee} \cdot [\Ocal(c)]= [L_x] \cdot [L_y] - [L_x]-[L_y] + [\Ocal_{\eex}]
        \end{equation*}
        By self-duality of the Koszul complex the right hand side is equal to 
        $[\Ocal_0]$. Hence by rearranging we obtain 
        $[\Ocal_0]^{\vee} \simeq [\Ocal_0(c)]$. For the full claim, note 
        \begin{equation*}
        \begin{split}
            [\Ocal_0(g)]^{\vee} 
            &= [\Ocal_0]^{\vee} \cdot [\Ocal(g)]^{\vee} 
            = [\Ocal_0(c)] \cdot [\Ocal(-g)] = [\Ocal_0(c - g)]
        \end{split}
        \end{equation*}
    \end{proof}
\end{lemma}
Applying the lemma to each term of $[\Ocal_{\zz}]$ in the above sum proves:
\begin{equation*}
    \begin{split}
        [\Ocal_{\zz}]^{\vee} 
        &= \sum_{g \in G} a_{g} [\Ocal_0(c-g)]\\
    \end{split}
\end{equation*}

\subsection{The Augmented Chern Character and Todd Class}

We will now calculate the individual factors of $\II$. 
As we are working within $\widetilde{A}^*(I\eex)$ necessarily only those 
components $\CC_g$ such that $\dim(\CC_g) = 0$ have a nonzero contribution. The 
first calculation will be that of the augmented Chern character.
\begin{lemma}
  Assume $\dim(\CC_g) = 0$. Then 
  $\Ch([L_x])_g = \chi_{g}(-g_x), \Ch([L_y])_g = \chi_{g}(-g_y)$.
  \begin{proof}
    For the canonical projection, $q: I \eex \rightarrow \eex$, 
    there are isomorphisms obtained from pulling back,
    $q^* [L_x]_g \simeq [\Ocal(-g_x)]_g$. Next 
    $\Ch = \text{Ch} \circ \rho$, where $\rho$ is the eigenvalue map. By 
    comparison the element $g$ acts on $[\Ocal(-g_x)]$ with an eigenvalue of 
    $\chi_g(-g_x)$. As such 
    \begin{equation*}
      \begin{split}
        \Ch([L_x])_g 
        &= \text{Ch}\big(\rho([\Ocal(-g_x)]_g)\big) \\
        &= \text{Ch}(\chi_g(-g_x) \cdot [\Ocal(-g_x)]_g]) \\
        &= \chi_g(-g_x)\text{Ch}([\Ocal(-g_x)]_g]) = \chi_{g}(-g_x)\\
      \end{split}
    \end{equation*}
    The penultimate equality arises from pulling $\chi_g(-g_x)$ out of 
    $\text{Ch}$ as it is a complex root of unity, and the last equality 
    uses that $c_1([\Ocal(-g_x)] \vert_{\CC_g}) = 0$ as $\dim(\CC_g) = 0$. 
    The calculation for $L_y$ is identical and will be omitted.
  \end{proof}
\end{lemma}
Next note we may factor equation \ref{SimplePtClass} as
\begin{equation*}
  \begin{split}
      [\Ocal_0] 
      &= [\Ocal_{\eex}] - [L_x] - [L_y] + [L_x] \cdot [L_y]\\
      &= ([\Ocal_{\eex}] - [L_x])([\Ocal_{\eex}] - [L_y])\\
  \end{split}
\end{equation*}
within $K(\eex)$. As $q: I \eex \rightarrow \eex$ induces a 
ring homomorphism $q^*: A^*(\eex) \rightarrow A^*(I \eex)$
and the ring structure of the latter Chow ring is compatible with 
the decomposition $A^*(I \eex) \simeq \bigoplus_{g \in G} A^*(\CC_g)$ 
it follows that 
\begin{equation*}
  \begin{split}
      q^*[\Ocal_0]_g 
      &= ([\Ocal_{\CC_g}] - q^*[L_x]_g)([\Ocal_{\CC_g}] - q^*[L_y]_g)\\
  \end{split}
\end{equation*}
for all $g \in G$. Thus by using the calculations of the previous lemma 
and the fact that $\Ch$ is a ring homomorphism we obtain:
\begin{coro}
  If $\dim(\CC_g) = 0$ then 
  \begin{equation*}
    \Ch([\Ocal_0])_g = (1 - \chi_g(-g_x))(1 - \chi_g(-g_y))
  \end{equation*}
\end{coro}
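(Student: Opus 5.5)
The corollary follows by combining equation (\ref{SimplePtClass}), in its factored form, with the preceding lemma and the multiplicativity of the augmented Chern character.

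First, factor the class of the origin in $K(\eex)$ as
\begin{equation*}
  [\Ocal_0] = ([\Ocal_{\eex}] - [L_x])([\Ocal_{\eex}] - [L_y]).
\end{equation*}
This is just the Koszul relation (\ref{SimplePtClass}) together with $[L_x]\cdot[L_y] = [L_x \otimes L_y]$.

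Next, apply $\Ch = \operatorname{Ch}\circ\rho\circ q^*$. Pullback $q^*$ is a ring homomorphism on $K$. The eigenvalue map $\rho$ is multiplicative on classes of eigen-decomposed bundles over $I\eex$, since $(V\otimes W)^{(\lambda)} = \bigoplus_{\mu\nu=\lambda} V^{(\mu)}\otimes W^{(\nu)}$. The ordinary Chern character is a ring homomorphism. Hence $\Ch$ is multiplicative. It is also compatible with restriction to each component $\CC_g$, because the ring structure on $A^*(I\eex)$ respects the decomposition $\bigoplus_g A^*(\CC_g)$.

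It therefore suffices to evaluate each factor on $\CC_g$. For the structure sheaf, $q^*[\Ocal_{\eex}]$ restricts to $[\Ocal_{\CC_g}]$, and $g$ acts on it trivially, with eigenvalue $1$. So $\Ch([\Ocal_{\eex}])_g = 1$: the rank is $1$ and there are no higher Chern classes on a zero dimensional component. For the line bundles, the preceding lemma gives $\Ch([L_x])_g = \chi_g(-g_x)$ and $\Ch([L_y])_g = \chi_g(-g_y)$. Multiplying the two factors yields $(1-\chi_g(-g_x))(1-\chi_g(-g_y))$.

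The only step needing care is the multiplicativity of $\rho$, specifically that the eigenvalue of $g$ on a tensor product of line bundles is the product of the eigenvalues. For the characters $\chi(-g_x)$ and $\chi(-g_y)$ this is immediate from $\chi_g(a)\chi_g(b)=\chi_g(a+b)$, so I do not expect a real obstacle.
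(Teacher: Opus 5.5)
Your proposal is correct and follows essentially the same route as the paper. Both factor (\ref{SimplePtClass}) as $([\Ocal_{\eex}] - [L_x])([\Ocal_{\eex}] - [L_y])$, restrict along $q$ to $\CC_g$ (the ring structure respects the decomposition of $I\eex$), and apply the multiplicativity of $\Ch$ together with the preceding lemma; your explicit checks that the eigenvalue map is multiplicative and that $\Ch([\Ocal_{\eex}])_g = 1$ just spell out what the paper uses implicitly.
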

Next observe 
$$\Ch([\Ocal_0(h)])_g = \Ch([\Ocal_0])_g \Ch([\Ocal(h)])_g = \chi_g(h) \Ch([\Ocal_0])$$
As $[\Ocal_{\zz}] = \sum_{g \in G} a_g [\Ocal_0(g)]$ we may multiply 
by the appropriate coefficient and sum the resulting terms together to obtain 
an explicit formula for $\Ch([\Ocal_{\zz}])$. Upon applying the same steps 
to $\Ch([\Ocal_{\zz}]^{\vee})$ and using that $[\Ocal_{\zz}(h)]^{\vee} = [\Ocal_0(c - h)]$ we may prove
\begin{prop}
    \begin{equation*}
        \Ch([\Ocal_{\zz}])_g = (1-\chi_g(-g_x))(1-\chi_g(-g_y)) A_g
    \end{equation*}
    \begin{equation*}
        \Ch([\Ocal_{\zz}^{\vee}])_g = (1-\chi_g(-g_x))(1-\chi_g(-g_y)) B_g
    \end{equation*}
    where
    \begin{equation*}
        A_g = \sum_{h \in G} a_h \chi_g(h), ~ B_i = \sum_{h \in G} a_{h} \chi_g(c - h)
    \end{equation*}
\end{prop}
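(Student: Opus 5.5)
The plan is to expand both classes in $K(\eex)$ and evaluate the augmented Chern character componentwise on the inertia stack, using that $\Ch$ is a ring homomorphism and that only the zero dimensional components $\CC_g$ of $I\eex$ contribute. Fix $g$ with $\dim(\CC_g)=0$. By equation (\ref{SubstackExpand}) we have $[\Ocal_{\zz}] = \sum_{h} a_h [\Ocal_0(h)]$. Additivity of $\Ch$ then gives $\Ch([\Ocal_{\zz}])_g = \sum_h a_h \Ch([\Ocal_0(h)])_g$. Since $[\Ocal_0(h)] = [\Ocal_0]\cdot[\Ocal(h)]$ and $\Ch$ is multiplicative, each term factors as $\Ch([\Ocal_0])_g \cdot \Ch([\Ocal(h)])_g$.

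First I compute the factor $\Ch([\Ocal(h)])_g$, arguing exactly as in the lemma computing $\Ch([L_x])_g$. The pullback $q^*\Ocal(h)$ restricted to $\CC_g$ is a line bundle on which $g$ acts by the scalar $\chi_g(h)$. So the eigenvalue map multiplies its class by $\chi_g(h)$. Its first Chern class vanishes on the zero dimensional $\CC_g$, so $\Ch([\Ocal(h)])_g = \chi_g(h)$. Combining this with the corollary $\Ch([\Ocal_0])_g = (1-\chi_g(-g_x))(1-\chi_g(-g_y))$ and pulling the common factor out of the sum yields the first formula, with $A_g = \sum_h a_h \chi_g(h)$.

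For the dual I use the lemma $[\Ocal_0(h)]^{\vee} = [\Ocal_0(c-h)]$, which gives $[\Ocal_{\zz}]^{\vee} = \sum_h a_h[\Ocal_0(c-h)]$. Since $[\Ocal_0(c-h)] = [\Ocal_0]\cdot[\Ocal(c-h)]$, the same computation gives $\Ch([\Ocal_0(c-h)])_g = \Ch([\Ocal_0])_g\,\chi_g(c-h)$. Summing over $h$ produces the second formula with $B_g = \sum_h a_h\chi_g(c-h)$.

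The main point needing care is that $\Ch$ commutes with the dualizing operation on $K$ only after this $K$-theoretic rewriting. We must not naively dualize $\Ch$ by conjugating eigenvalues. Instead we work entirely with the class $[\Ocal_0(c-h)]$ produced by the Koszul self-duality lemma, so no dualization of $\Ch$ itself is needed. A second point to check is that the eigenvalue convention is consistent. The eigenvalue of $g$ on $\Ocal(h)$ must be taken to be $\chi_g(h)$, matching the sign convention already used for $L_x=\Ocal(-g_x)$ in the earlier lemma. Granting that convention, the rest is bookkeeping.
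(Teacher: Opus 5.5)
Your proposal is correct and follows the paper's argument: expand $[\Ocal_{\zz}] = \sum_h a_h[\Ocal_0(h)]$, use multiplicativity to get $\Ch([\Ocal_0(h)])_g = \chi_g(h)\Ch([\Ocal_0])_g$, and handle the dual through $[\Ocal_0(h)]^{\vee} = [\Ocal_0(c-h)]$. Your warning against conjugating eigenvalues is harmless but unnecessary, since conjugating on a zero dimensional $\CC_g$ gives $(1-\chi_g(g_x))(1-\chi_g(g_y))\sum_h a_h\chi_g(-h)$, which equals $(1-\chi_g(-g_x))(1-\chi_g(-g_y))B_g$ because $\chi_g(c)(1-\chi_g(-g_x))(1-\chi_g(-g_y)) = (1-\chi_g(g_x))(1-\chi_g(g_y))$.
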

The final remaining calculation is that of the augmented Todd class.
\begin{lemma}
    If $\dim(\CC_g) = 0$ then
    \begin{equation*}
        \Td(\eex)_g = (1 - \chi_g(-g_x))^{-1}(1 - \chi_g(-g_y))^{-1}
    \end{equation*}
    \begin{proof}
        With the induced action on the tangent bundle we have the 
        isomorphism $T \eex \simeq L_x^{\vee} \oplus L_{y}^{\vee}$.
        Recall augmented Todd classes are multiplicative on sums so that
        $\Td([\eex]) = \Td([L_x]^{\vee})\Td([L_y]^{\vee})$.
        To calculate the right hand side we will use the original definition:
        \begin{equation*}
            \Td(V) = \frac{\text{Td}((q^*V)^{inv})}{\text{Ch}\big(\rho \circ \lambda_{-1}(\big[(q^*V)^{mov}\big]^{\vee}) \big)}
        \end{equation*}
        As $\dim(\CC_g) = 0$ in particular $g \neq 0$ so that 
        $(q^* L_x)_g^{inv} = 0$ and thus the numerator is equal to 1. For the 
        denominator, clearly 
        $(q^*L_x)_g^{mov} = [q^* L_x]_g = [\Ocal(-g_x)]_g$ and hence 
        its dual pulls back to $[\Ocal(g_x)]_g$. 
        Dualizing once more as in the equation for $\Td$ and
        calculating the remainder of the denominator yields:
        \begin{equation*}
            \begin{split}
                \text{Ch}\big(\rho \circ \lambda_{-1}([\Ocal(-g_x)]_g)\big)
                &= \text{Ch}\big(\rho([\Ocal_{\CC_g}] - [\Ocal(-g_x)]_g\big)\\
                &= \text{Ch}(1 - \chi_g(-g_x) \cdot [\Ocal(-g_x)]_g)\\
                &= 1 - \chi_g(-g_x) \cdot \text{Ch}([\Ocal(-g_x)_g])= 1 - \chi_g(-g_x)\\
            \end{split}
        \end{equation*}
        where the last equality follows from $c_1([\Ocal(-g_x)]_g) = 0$. The 
        calculation for $[L_y]$ is identical and will be omitted.
        By multiplicativity of Todd classes the claim is proven.
    \end{proof}
\end{lemma}
With the preceding lemma completed, each factor of $\II$ has been 
treated and hence the integral may be calculated. 
Substituting each of the above expressions into the equation of 
$\II_g$ yields 
\begin{equation*}
  \II_g = A_g B_g (1 - \chi_g(-g_x)) (1 - \chi_g(-g_y))
\end{equation*}
where $A_g, B_g$ are defined as above.
\begin{prop}
  \begin{equation*}
  Q_{\eex} = \frac{1}{\# G} \sum_{\dim(\CC_g) = 0} A_g B_g (1 - \chi_g(-g_x)) (1 - \chi_g(-g_y))
\end{equation*}
\begin{proof}
  Note that by the above calculations if $\dim(\CC_g) = 0$ then 
  $\II_g$ is a constant term. Moreso we have already observed that 
  if $\dim(\CC_g) > 0$ then the corresponding integral on the component 
  is equal to 0. As such we will calculate
  \begin{equation*}
  \begin{split}
      \int_{I \eex} \II 
      &= \sum_{g \in G} \int_{\CC_g} \II_g 
      = \sum_{\dim(\CC_g) = 0} \int_{\CC_g} \II_g \\
      &= \frac{1}{\# G} \sum_{\dim(\CC_g) = 0} A_g B_g (1 - \chi_g(-g_x)) (1 - \chi_g(-g_y))
  \end{split}
  \end{equation*}
  The factor of $\frac{1}{\# G}$ arises from the fact that each such $\CC_g$ 
  is a zero dimensional gerbe with stabilizer group of size $\# G$.
\end{proof}
\end{prop}
Now consider the completed sum,
\begin{equation*}
  \overline{Q}_{\eex} = \frac{1}{\# G} \sum_{g \in G} A_g B_g (1 - \chi_g(-g_x)) (1 - \chi_g(-g_y))
\end{equation*}
where we now range over the entire group. Observe that if 
$\dim(\CC_g) > 0$ then necessarily either $q^* [L_x]_g$ or $q^* [L_y]_g$
is the trivial line bundle. By the above calculations this implies 
$\chi_g(-g_x) = 1$ or $\chi_g(-g_y) = 1$. As such for those 
$g \in G$ such that $\dim(\CC_g) > 0$ we have 
\begin{equation*}
  (1 - \chi_g(-g_x)) (1 - \chi_g(-g_y)) = 0
\end{equation*}
In total we obtain the equality of forms
\begin{equation*}
  \overline{Q}_{\eex} = Q_{\eex}
\end{equation*}
The remainder of this section will consist of calculating the coefficients 
of the Euler form by instead performing a suitable analysis of 
$\overline{Q}_{\eex}$.

\subsection{Simplification of the Euler Form}

To proceed, we know already there exists $b_{g,h} \in \frac{1}{2} \Z$ 
such that
\begin{equation*}
  Q_{\eex} = \sum_{(g,h) \in D} b_{g,h} (a_g - a_h)^2
\end{equation*}
where $D = (G^2 - \Delta)/S_2$. Furthemore it was already shown 
that Proposition \ref{OrthoRelation} implies the left regular 
representation of $G$ on $\R[G]$ acts by orthogonal matrices 
with respect to $\innprod{-}{-}_{\eex}$. In particular the 
symmetric matrix associated to $Q_{\eex}$ is circulant so 
$b_{g'+g,g'+h} = b_{g,h}$ for all $g',g,h \in G$. Thus without loss of 
generality set $h = 0$ and take $g \neq 0$. In what follows let 
$\partial_g = \frac{\partial}{\partial a_g}$ 
and $e_0$ denote the canonical basis element of $\R[G]$ corresponding to 
$0 \in G$. Observe that $\partial_g Q_{\eex}(e_0) = -2b_{g,0}$. 
To obtain more desirable equalities later we may just as well calculate 
the partial derivatives of $\overline{Q}_{\eex}$ in order to find the 
$b_{g,0}$. From its definition we have
\begin{equation*}
  \overline{Q}_{\eex} = \frac{1}{\# G} \sum_{g \in G} A_g B_g (1 - \chi_g(-g_x)) (1 - \chi_g(-g_y))
\end{equation*}
and 
\begin{equation*}
    A_g = \sum_{h \in G} a_h \chi_g(h), ~ B_i = \sum_{h \in G} a_{h} \chi_g(c - h)
\end{equation*}
Let $\theta \in G$ denote a generic element. Upon doing so we will 
expand the product $A_{\theta} B_{\theta}$ and regroup terms to obtain
\begin{equation*}
    \begin{split}
        A_{\theta} B_{\theta} 
        &= \sum_{h_1, h_2 \in G} a_{h_1} a_{h_2} \chi_{\theta}(h_1) \chi_{\theta}(c - h_2)\\
        &= \sum_{h_1 = h_2} a_{h_1}^2 \chi_{\theta}(h_1) + \sum_{(h_1,h_2) \in D} a_{h_1}a_{h_2} (\chi_{\theta}(c + h_1 - h_2) + \chi_{\theta}(c + h_2 - h_1))
    \end{split}
\end{equation*}
where we have used $\chi_{\theta}(a)\chi_{\theta}(b) = \chi_{\theta}(a + b)$.
Take a partial derivative with respect to $a_g$ to obtain 
\begin{equation*}
  \partial_g \big(A_{\theta} B_{\theta} \big) = 2a_g \chi_{\theta}(g) + \sum_{h_2 \neq g} a_{h_2} \big(\chi_{\theta}(c + g - h_2) + \chi_{\theta}(c + h_2 - g) \big)
\end{equation*}
whence upon substituting $e_0$ and using that $g \neq 0$ we obtain 
\begin{equation*}
    \partial_g \big(A_{\theta} B_{\theta} \big)(e_0) = \chi_{\theta}(c + g) + \chi_{\theta}(c - g)
\end{equation*}
As $c = g_x + g_y$, upon multiplying the above expression by 
\begin{equation*}
  (1 - \chi_{\theta}(-g_x))(1 - \chi_{\theta}(-g_y)) = 1 - \chi_{\theta}(-g_x) - \chi_{\theta}(-g_y) + \chi_{\theta}(-c)
\end{equation*}
it follows
\begin{equation*}
  \partial_g \big(A_{\theta} B_{\theta} \big)(e_0) \big(1 - \chi_{\theta}(-g_x) - \chi_{\theta}(-g_y) + \chi_{\theta}(-c)\big)
\end{equation*}
is equal to
\begin{equation*}
  \begin{split}
    \sum_{\ell \in \{0,g_x,g_y,c\}} \chi_{\theta}(\ell + g) + \chi_{\theta}(\ell - g)
  \end{split}
\end{equation*}
Observe that for any fixed $h \in G$ we have 
$\chi_{\theta}(h) = \chi_{h}(\theta)$. Thus upon summing over 
$\theta \in G$ and dividing by the order of $G$ we obtain
\begin{equation*}
  \begin{split}
    \partial_g \overline{Q}_{\eex}(e_0) 
    &= \frac{1}{\# G} \sum_{\theta \in G} \bigg( \sum_{\ell \in \{0,g_x,g_y,c\}} \chi_{\theta}(\ell + g) + \chi_{\theta}(\ell - g)\bigg)\\
    &= \frac{1}{\# G} \sum_{\theta \in G} \bigg( \sum_{\ell \in \{0,g_x,g_y,c\}} \chi_{\ell + g}(\theta) + \chi_{\ell - g}(\theta)\bigg)\\
    &= \frac{1}{\# G} \sum_{\ell \in \{0,g_x,g_y,c\}} \bigg( \sum_{\theta \in G} \chi_{\ell + g}(\theta) + \chi_{\ell - g}(\theta)\bigg)\\
  \end{split}
\end{equation*}
Next we will rewrite the summand as 
\begin{equation*}
  \chi_{\ell + g}(\theta) + \chi_{\ell - g}(\theta) = 
  \chi_{\ell}(\theta) \overline{\chi_{- g}(\theta)} + \chi_{\ell}(\theta) \overline{\chi_{g}(\theta)}
\end{equation*}
Recall that for any $G$ finite over $\C$ there is an inner product on 
characters given by 
\begin{equation*}
    \innprod{\rho_1}{\rho_2} = \frac{1}{\# G} \sum_{g \in G} \rho_1(g) \overline{\rho_2(g)}
\end{equation*}
By comparison with the previous expression we may deduce
the summand is equal to 
$\innprod{\chi_{\ell}}{\chi_{- g} + \chi_g}$.

Define the \emph{standard character} by
$\rhostd = \chi_{g_x} + \chi_{g_y}$. Note that 
$\innprod{\chi_0}{\chi_{g} + \chi_{-g}} = 0$ since by assumption 
$g \neq 0$. As such we may compactly write the above expression for the 
partial derivative as
\begin{equation*}
  \begin{split}
    \partial_g \overline{Q}_{\eex}(e_0) 
    &= \innprod{\chi_c}{\chi_g + \chi_{-g}} 
    - \innprod{\chi_{g_x}}{\chi_g + \chi_{-g}} - \innprod{\chi_{g_y}}{\chi_g + \chi_{-g}}\\
    &= \innprod{\chi_c - \rhostd}{\chi_g + \chi_{-g}}\\
  \end{split}
\end{equation*}
Finally recall that $\partial_g \overline{Q}_{\eex}(e_0) = -2b_{g,0}$ and 
$b_{g'+g,g'+h} = b_{g,h}$. After substituting this into the above expression 
and rearranging terms we have (nearly) proven:
\begin{theorem}\label{EulerFormCoeffs}
    Let $\eex = [\aff^2/G]$ and $\rho \in \Rep(G)$ satisfy
    $\eex^{[\rho]} \neq \emptyset$. Then for $a_g = \innprod{\rho}{\chi(g)}$
    we have
    \begin{equation*}
      Q_{\eex}(\rho) = 
        \sum_{(g,h) \in D} b_{g,h} (a_g - a_h)^2
    \end{equation*}
    where $b_{g,h} = \frac{1}{2}\innprod{\rhostd - \chi_c}{\chi_{g-h} + \chi_{h-g}}$.
    \begin{proof}
      The above calculations prove the claim when $\eex$ is rigid and 
      thus all that remains is to extend to the general 
      case. As such let $H$ denote the stabilizer group of the generic 
      point. Recall the quotient $G \to G/H$ induces an inclusion 
      of groups $\Irr(G/H) \subset \Irr(G)$ by dualizing, and thus 
      we will regard $\Irr(G/H)$ as a subgroup. First note 
      $g_x,g_y,c \in \Irr(G/H)$. Furthermore recall if 
      $\eex^{[\rho]} \neq \emptyset$ then necessarily $\rho \in \Rep(G/H)$. 
      In particular if $a_g \neq 0$ then $\chi(g) \in \Irr(G/H)$.
      Now assume $\chi(g),\chi(h) \notin \Irr(G/H)$. It follows $a_g, a_h = 0$
      since $\rho \in \Irr(G/H)$ and thus $b_{g,h}(a_g - a_h)^2 = 0$.
      On the other hand assume $\chi(g) \in \Irr(G/H)$ and 
      $\chi(h) \notin \Irr(G/H)$. In this case although it may 
      happen $a_g \neq 0$ observe that $\chi(g - h) \notin \Irr(G/H)$ 
      as $\Irr(G/H)$ is a subgroup. As such
      \begin{equation*}
        b_{g,h} = \frac{1}{2} \innprod{\rhostd - \chi_c}{\chi_{g - h} + \chi_{h - g}} = 0
      \end{equation*} 
      since both terms in the left component correspond to representations 
      contained in $\Irr(G/H)$. Thus $b_{g,h} (a_g - a_h)^2 = 0$ in 
      this case as well. In total the only terms which nontrivially 
      contribute to the sum are those when $\chi(g),\chi(h) \in \Irr(G/H)$.
      Finally the equality
      \begin{equation*}
      Q_{\eex}(\rho) = \sum_{(g,h) \in D} b_{g,h} (a_g - a_h)^2
      \end{equation*}
      may be established by passing to $\eexgr$.
    \end{proof}
\end{theorem}
Observe that every coefficient of $Q_{\eex}$ is calculated with the 
above procedure when $\eex$ is rigid. As such, in 
this case the Euler form may also be characterized as the unique quadratic 
form which satisfies $Q_{\eex}(\rho) = \chi(\Ocal_{\zz}, \Ocal_{\zz})$ 
whenever $\varrho(\zz) \simeq \rho$. However if $\eex$ is not rigid 
then the above equality is insufficient to uniquely specify $Q_{\eex}$. 
\begin{example}
  There is the equivalence of actions $(4;2,2) \simeq (2;1,1)$. Let 
  $\eex$ be the quotient stack of the first action and note
  $\eexgr$ corresponds to the second action. Upon setting 
  $Q_{\eex}^{\operatorname{rig}}$ to be the Euler form of $\eexgr$ it is easy to 
  calculate $Q_{\eex}^{\operatorname{rig}}(A_0,A_1) = 2(A_0 - A_1)^2$. 
  Upon taking a rigidification the induced 
  $\Irr(\mu_4/\mu_2) \subset \Irr(\mu_4)$ corresponds to the subgroup 
  $(2) \subset \mu_4$ so that $Q_{\eex} = 2(a_0 - a_2)^2$. Next set 
  $C(a_0,a_1,a_2,a_3) = 2(a_0 - a_2)^2 + 2(a_1 - a_3)^2$.
  Recall that for all $\rho \in \Rep(\mu_4/\mu_2) \subset \Rep(\mu_4)$ we 
  have $\rho \simeq a_0 \chi(0) + a_2 \chi(2)$. As such if $\eex^{[\rho]}$ 
  is nonempty then clearly $Q_{\eex}(a_0,a_1,a_2,a_3) = C(a_0,a_1,a_2,a_3)$.
\end{example}

\section{The Generalized McKay Graph of $\eex$} 

One additional construction which may be performed is the association of a 
graph to $\eex$. To be precise we will fix an abelian planar stack $\eex$ 
and let $Q_{\eex}$ denote its Euler form. Upon passing to a rigidification 
we will reduce to the case $\eex = \eexgr$ and define the associated graph 
for $\eex$ to be one and the same as the graph of $\eexgr$. 
We will now discuss the construction of the McKay graph of the subgroup 
$G \subset GL(2)$ corresponding to the action, which is denoted $\Gamma_G$. 
Recall,
\begin{definition}
    $\Gamma_G$ is the graph constructed as follows: 

    (i) $\operatorname{Nodes}(\Gamma_G) = \Irr(G)$
    
    (ii) Given two nodes $\chi(g), \chi(h)$ set 
    $n_{g,h} = \innprod{\chi_h \cdot \rhostd}{\chi_g}$.
    Then there is a unique directed edge $\chi(g) \rightarrow \chi(h)$ iff 
    $n_{g,h} > 0$, in which case the weight is given by $n_{g,h}$. Otherwise 
    no directed edges of $\Gamma_G$ start at $\chi(g)$ and terminate at 
    $\chi(h)$.
    
    (iii) If $n_{g,h} = n_{h,g}$ then both directed edges between 
    $\chi(g), \chi(h)$ are replaced by a single undirected edge of the same 
    weight.
\end{definition}
With this in mind we will continue with an analogous construction for $\eex$. 
By Theorem \ref{EulerFormCoeffs} the coefficients of $Q_{\eex}$ are given by 
\begin{equation*}
    b_{g,h} = \frac{1}{2}\innprod{\rhostd - \chi_c}{\chi_{g-h}+\chi_{h-g}}
\end{equation*}
Upon substituting this into the inner product and distributing across the 
sums we can see the terms of the sum give rise to independent contributions 
to the total coefficient. As such define 
\begin{equation*}
    B_{g,h} =\frac{1}{2} \innprod{\rhostd - \chi_c}{\chi_{g-h}}; ~ B_{h,g} = \frac{1}{2}\innprod{\rhostd - \chi_c}{\chi_{h - g}}
\end{equation*}
Clearly $B_{g,h}, B_{h,g} \in \frac{1}{2} \Z$ and 
$b_{g,h} = B_{g,h} + B_{h,g}$. In general 
$B_{g,h} \neq B_{h,g}$ and the most prominent exception to this rule is when 
$G \subset SL(2)$. As $\chi(g - h) \simeq \chi(g) \otimes \chi(h)^{\vee}$
and the left regular representation acts by orthogonal matrices we may show 
\begin{equation*}
    B_{g,h} = \frac{1}{2} \innprod{\chi_h \cdot (\rhostd - \chi_{c})}{\chi_{g}}
\end{equation*}
Define:
\begin{definition}
    The Generalized McKay graph of $\eex$, denoted $\Gamma_{\eex}$, is given 
    by the following data:

    (i) $\operatorname{Nodes}(\Gamma_{\eex}) = \Irr(G)$

    (ii) Given $\chi(g), \chi(h)$ such that $B_{g,h} \neq 0$ there is a 
    a directed edge $\chi(g) \rightarrow \chi(h)$ whose weight is given by 
    $2B_{g,h}$. Otherwise no directed edge of $\Gamma_{\eex}$ starting from 
    $\chi(g)$ and terminating at $\chi(h)$ exists. 
    
    (iii) If $B_{g,h} = B_{h,g}$ then both directed edges between 
    $\chi(g), \chi(h)$ are replaced by a single undirected edge of the same 
    weight.
\end{definition}
Next write
$$2B_{g,h} = \innprod{\chi_h \cdot \rhostd}{\chi_g} - \innprod{\chi_h \cdot \chi_c}{\chi_g}$$
If either $g$ or $h$ are nonzero then $g,h \neq c$. Thus as everything 
above is a character it is easy to see $2B_{g,h}$ is equal to exactly 
one of the terms on the right hand side, and thus we will partition 
$\Gamma_{\eex}$ into positive and negative parts.

Let $E_{g,h}$ denote the a directed edge starting at $\chi_g$ and ending at 
$\chi_h$ and assume $B_{g,h} \neq 0$. If $B_{g,h} > 0$ then $E_{g,h}$
is said to be a \emph{positive edge} and this will be denoted 
$E_{g,h} > 0$. If $B_{g,h} < 0$ then $E_{i,j}$ is said to be a 
\emph{negative edge} and this will be denoted $E_{g,h} < 0$.
Observe that if $B_{g,h} = B_{h,g}$ and $E_{g,h} > 0$ then necessarily 
$E_{h,g} > 0$. As such it is meaningful to declare an undirected edge to be 
positive or negative respectively.
\begin{definition}
    The positive part of $\Gamma_{\eex}$, denoted $\Gamma_{\eex}^+$ 
    is defined to be the spanning subgraph satisfying 
    $\operatorname{Edge}(\Gamma_{\eex}^{+}) = \{E_{g,h} > 0\}$.
    The negative part of $\Gamma_{\eex}$, denoted $\Gamma_{\eex}^-$ 
    is defined to be the spanning subgraph satisfying 
    $\operatorname{Edge}(\Gamma_{\eex}^{-}) = \{E_{g,h} < 0\}$.
\end{definition}
Upon comparing the construction of both graphs it is easy to prove:
\begin{prop}
    Let $\eex$ be a rigid abelian planar stack and $G \subset GL(2)$
    denote the subgroup associated to the action. Then there is a natural 
    isomorphism 
    $$\Gamma_{G} \simeq \Gamma_{\eex}^{+}$$
\end{prop}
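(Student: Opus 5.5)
The identification is made at the level of weighted directed edges, with the identity map $\chi(g) \mapsto \chi(g)$ on nodes; both graphs have node set $\Irr(G)$. Using the rewriting
\begin{equation*}
  2B_{g,h} = \innprod{\chi_h \cdot \rhostd}{\chi_g} - \innprod{\chi_h \cdot \chi_c}{\chi_g} = n_{g,h} - \innprod{\chi_{h+c}}{\chi_g},
\end{equation*}
it suffices to show that whenever $n_{g,h} > 0$ the subtracted term vanishes, and that whenever $2B_{g,h} > 0$ we have $n_{g,h} > 0$. Then the positive directed edges of $\Gamma_{\eex}$ coincide with the directed edges of $\Gamma_G$, with the same weights $2B_{g,h} = n_{g,h}$. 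The rule (iii) for undirected edges then matches automatically, since $B_{g,h} = B_{h,g}$ is equivalent to $n_{g,h} = n_{h,g}$ on positive edges.

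First, $n_{g,h} = \innprod{\chi_{h+g_x} + \chi_{h+g_y}}{\chi_g}$ counts how many of $h+g_x, h+g_y$ equal $g$. Likewise $\innprod{\chi_{h+c}}{\chi_g}$ is $1$ if $g = h+c$ and $0$ otherwise. So a positive term with $g = h + g_x$ (or $h+g_y$) could only be cancelled if also $g = h + c$. That would force $g_y = 0$ (respectively $g_x = 0$).

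The key step, and the main obstacle, is to rule this out or handle it. If $g_y = 0$, then $y$ is invariant, and $\Fix$ of every element contains the $y$-axis. Here I would use rigidity. The case $g_y=0$ is genuinely allowed: for example $(n;1,0)$ is rigid. In that case $g = h+g_x = h+c$ gives $2B_{g,h} = n_{g,h} - 1$. With $g_x \neq 0$ (which holds since the action is nontrivial, via rigidity), $n_{g,h} = 1$ and $2B_{g,h} = 0$, so the edge is deleted from $\Gamma_{\eex}$. In $\Gamma_G$, however, there is the loop $\chi(h) \to \chi(h)$ from $g_y = 0$, and the edge $h \to h+g_x$. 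So the naive identification fails there. I would therefore try first to show that the intended hypotheses (as the paper's remark "if either $g$ or $h$ are nonzero then $g,h\neq c$" suggests) exclude $g_x = 0$ or $g_y = 0$. Concretely, I would read "rigid" together with injectivity of $G \to GL(2)$ as the intended hypothesis, and restrict to $g_x, g_y \neq 0$. Then $c \neq g_x, g_y$, so the terms $n_{g,h}$ and $\innprod{\chi_{h+c}}{\chi_g}$ never both fire for the same pair.

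Conversely, $2B_{g,h} > 0$ forces $n_{g,h} > 0$, since the subtracted term is nonnegative. Negative edges arise exactly when $g = h + c$, and these form $\Gamma_{\eex}^-$. Naturality of the identification holds because it is the identity on $\Irr(G)$, so it commutes with any character isomorphism. The one remaining technical point is the case $g_x = g_y$, where $n_{g,h} = 2$. Here the weight $2B_{g,h} = 2$ still agrees, since $c = 2g_x \neq g_x$ when $g_x \neq 0$.
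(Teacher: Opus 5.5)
Your argument is the paper's own: the same decomposition $2B_{g,h} = n_{g,h} - \innprod{\chi_{h+c}}{\chi_g}$, together with the observation that the two terms never fire for the same pair. The paper justifies that observation only by its garbled remark ``if either $g$ or $h$ are nonzero then $g,h\neq c$'', which evidently means $g_x,g_y\neq 0 \Rightarrow g_x,g_y\neq c$; so the paper silently assumes $g_x,g_y\neq 0$, as Section 6 later does explicitly.

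Your $(n;1,0)$ example correctly shows that the statement fails for rigid trivial-type stacks. There $2B_{g,h}=\delta_{g,h}$, so $\Gamma_{\eex}^{+}$ consists only of loops, while $\Gamma_G$ also has the edges $\chi(h+g_x)\to\chi(h)$ (this is the correct direction of the edge you wrote as $h\to h+g_x$). Since that action is also injective into $GL(2)$, the condition $g_x,g_y\neq 0$ must be added as a genuine extra hypothesis rather than read off from rigidity or injectivity.
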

As such the generalized McKay graph of $\eex$ may be used to recover the McKay 
graph of $G$.

\subsection{A Reinterpretation of the Dimension Formula}
We will conclude this section with one interpretation of the impact that 
positive and negative edges have on the geometry of $\eex$ and its Gilbert 
schemes when $\eex$ is rigid; To extend to arbitrary $\eex$ it is 
only necessary to recall that $\Gamma_{\eex}$ is defined by first passing to 
$\eexgr$ and there are natural isomorphisms 
$\eex^{[\rho]} \simeq \eexgr^{[\rho]}$.
Let $\rho \in \Rep(G)$ and define $a_g = \innprod{\rho}{\chi_g}$ so that 
$\rho \simeq \sum_{g \in g} a_g \chi(g)$. We have already proven 
\begin{equation*}
    \dim(\eex^{[\rho]}) = a_0 + a_{-c} - Q_{\eex}(\{a_g\})
\end{equation*}
and that $Q_{\eex}(\{a_g\}) = \sum_{(g,h) \in D} b_{g,h} (a_g - a_h)^2$.
By using that $b_{g,h} = B_{g,h} + B_{h,g}$ and each edge is uniquely 
positive or negative this last sum may be decomposed as
\begin{equation*}
    Q_{\eex}(\{a_g\}) = \sum_{E_{g,h} > 0} B_{g,h}(a_g - a_h)^2 - \sum_{E_{g',h'} < 0} \vert B_{g',h'} \vert (a_{g'} - a_{h'})^2
\end{equation*}
Substituting this into the dimension formula yields
\begin{equation}\label{DimPosNegContr}
    \dim(\eex^{[\rho]}) = a_0 + a_{-c} + \bigg[\sum_{E_{g',h'} < 0} \vert B_{g',h'} \vert (a_{g'} - a_{h'})^2\bigg] - \bigg[\sum_{E_{g,h} > 0} B_{g,h}(a_g - a_h)^2\bigg]
\end{equation}
To give the intended interpretation we will first apply this to a trivial 
case of Gilbert schemes. 

Let $X = \aff^2$ and $G = 0$. Note every representation of $G$
is of the form $m \RR$ and next fix a choice $\rho \simeq n \RR$. Then there 
is a natural isomorphism $[X/G]^{[n\RR]} \simeq X^{[n]}$ for all $n \in \N$. 
As this choice of stack corresponds to the $(0;0,0)$ action in turn $c = 0$ 
so that $a_0 = a_{-c}$. As $\rho \simeq n \RR$ it follows both are equal to 
$n$. Finally it is easy to verify in the case of Hilbert schemes that 
$Q_{\eex} = 0$ as $I \eex = \eex$ and the same reasoning above shows the 
integral evaluates to zero on this component. Thus from the dimension formula 
we obtain the expected dimension $\dim(X^{[n]}) = 2n$

Upon returning to more general Gilbert schemes we will instead view the 
terms of equation \ref{DimPosNegContr} as follows:
\begin{equation*}
\begin{split}
    \text{ExpectedDim}(\eex^{[\rho]}) &= a_0 + a_{-c}\\
    \text{InflationTerm}(\eex^{[\rho]}) &= \sum_{E_{g',h'} < 0} \vert B_{g',h'} \vert (a_{g'} - a_{h'})^2\\
    \text{DeflationTerm}(\eex^{[\rho]}) &= \sum_{E_{g,h} > 0} B_{g,h}(a_g - a_h)^2
\end{split}
\end{equation*}
As such negative edges will increase $\dim(\eex^{[\rho]})$ away from the 
expected dimension of $a_0 + a_{-c}$ and positive edges will act as a 
correction term to yield the correct quantity. Additionally the deflation 
term may be viewed as arising from representation theoretic influences as 
$\Gamma_{\eex}^{+} \simeq \Gamma_G$; It is a subtler 
question if there is a similar interpretation for the inflation term. 
Ideally there is a purely scheme theoretic interpretation 
for its existence, similar to how the McKay correspondence has a natural 
interpretation in terms of the geometry of blowups of certain quotient 
singularities.

\section{Explicit Values of Coefficients of Euler Forms}

This section will begin to give a classification schema of rigid 
abelian planar stacks by partitioning quotient stacks according 
to their Euler forms. This may be extended to arbitrary abelian planar stacks 
by defining the type of $\eex$ to be the type of $\eexgr$.
As such fix a rigid abelian planar stack $\eex = [\aff^2/G]$ and 
recall the elements $g_x, g_y \in G$ were defined by 
$\Span(x) \simeq \chi(g_x)$ and $\Span(y) \simeq \chi(g_y)$. In addition 
the canonical element is given by $c = g_x + g_y$.

\subsection{Trivial Type}
The first class which will be considered is when the action is 
sufficiently trivial. 
\begin{definition}
  $\eex$ is said to be of trivial type if $Q_{\eex} = 0$
\end{definition}
For more precise information on when this occurs we will prove:
\begin{lemma}
  $\eex$ is trivial type iff it satisfies one of the 
  three following conditions: (i) $g_x, g_y = 0$,
  (ii) $g_x = 0$ , or (iii) $g_y = 0$ .
  \begin{proof}
    Note that from direct calculation if any of the above conditions 
    hold then necessarily $Q_{\eex} = 0$ and so one direction is clear.
    For the other direction now assume $Q_{\eex} = 0$ and at least 
    one of $g_x,g_y$ is nonzero. Clearly $Q_{\eex} = 0$ iff all 
    coefficients are zero. As $\eex$ is rigid, with the natural 
    action of $G$ on $\Rep(G)$ there is the orthogonality relation 
    $Q_{\eex}(g \cdot \rho) = Q_{\eex}(\rho)$ for all $g \in G$.
    Thus if $b_{g,h}$ are the coefficients of $Q_{\eex}$ we have 
    $b_{g' + g, g' + h} = b_{g,h}$ for all $g',g,h \in G$ and thus 
    it suffices to consider only the coefficients $b_{g,0}$. Next by 
    Theorem \ref{EulerFormCoeffs} we have  
    $b_{g,0} = \frac{1}{2} \innprod{\chi_{g_x} + \chi_{g_y} - \chi_{c}}{\chi_{g} + \chi_{- g}} = 0$.
    As we have assumed not both of $g_x,g_y$ are zero this implies
    at least one of the characters in the left component 
    of the inner product is not equal to $\chi_{0}$. However if 
    both $\chi_{g_x}, \chi_{g_y}$ were nontrivial then we could choose 
    $g \in \{g_x, g_y\} \subset G$ to obtain a nonzero 
    coefficient. By relabeling if necessary we will assume 
    $\chi_{g_x} = \chi_{0}$. However as $\chi(g_x)$ 
    uniquely corresponds to $\chi_{g_x}$ this implies $g_x = 0$.
  \end{proof}
\end{lemma}

\subsection{Groupings}
As seen in the previous calculations the important factor for 
explicitly giving the coefficients of the Euler form is determining 
when elements of $\{\pm g_x, \pm g_y, \pm c\}$ coincide with each other.
To more simply state which classes coincide, we will use 
$(g_1,...,g_k)$ to indicate all $g_i \in \{g_x,...,-c\}$ are equal to each
other in $G$ and call it a \textbf{grouping}.

It is clear to each $\eex$ we may 
associate a grouping; For instance the grouping of trivial 
type stacks are $(g_x,-g_x,g_y,-g_y,c,-c)$. As this case 
has already been handled we will assume moving forward that 
$g_x, g_y \neq 0$. In particular this condition asserts we may disregard 
any grouping which contains both $g_x$ and $c$ in some cycle, and 
similarly for $(g_y,c)$,$(-g_x,-c)$,$(-g_y,-c)$. 
\begin{definition}
  A grouping is admissible if $g_x,g_y \neq c$ and it corresponds to a rigid stack.
\end{definition}
Excluding $g_x, g_y = c$ alone is quite restrictive among all possible 
groupings. For instance we may prove:
\begin{lemma}
  If a grouping is of the form $(*,*)(*,*,*,*)$ and admissible
  then necessarily it is $(c,-c)(g_x,-g_x,g_y,-g_y)$.
  \begin{proof}
    The entire grouping is determined by specifying the 2-cycle and 
    thus we will analyze cases accordingly. For 
    $(g_x,-g_x)(g_y,-g_y,c,-c)$ it follows $g_y = c$ and hence this grouping 
    is inadmissible. The remaining cases are handled identically.
  \end{proof}
\end{lemma}
In exactly the same fashion of analyzing each case it may be proven 
groupings whose associated partitions are 5+1, 4+2, 4+1+1, 3+2+1, or 
3+1+1+1 are always inadmissible. For example the grouping 
$(g_x)(c,-c)(-g_x,g_y,-g_y)$ is inadmissible since the 3-cycle implies 
$-g_y = g_y = -g_x$ and hence $g_x = -g_y = g_y = -g_x$, which is an 
impossibility for groupings arising from a stack. On the other hand we can 
completely characterize which groupings are admissible by analyzing the other 
partitions case by case:
\begin{prop}
  Up to a transposition of $g_x$ and $g_y$ the admissible groupings 
  are:
  \begin{itemize}
    \item Finite Classes: 
    \begin{itemize}
      \item (i) $(g_x,-g_x,g_y,-g_y)(c,-c)$
      \item (ii) $(g_x,-g_x)(g_y,-g_y)(c,-c)$
      \item (iii) $(g_x,g_y,-c)(-g_x,-g_y,c)$
      \item (iv) $(g_x,-g_x)(g_y,-c)(-g_y,c)$
      \item (v) $(g_x,g_y)(-g_x,-g_y)(c,-c)$
    \end{itemize}
    \item Skew Families: 
    \begin{itemize}
      \item $(i) (g_x,-g_y)(-g_x,g_y)(c,-c)$
      \item $(ii) (g_x,g_y)(-g_x,-g_y)(c)(-c)$
      \item $(iii) (g_x,-c)(-g_x,c)(g_y)(-g_y)$
    \end{itemize}
    \item Sporadic Classes: 
    \begin{itemize}
      \item (i) $(g_x,-g_x)(g_y)(-g_y)(c)(-c)$
      \item (ii)$(c,-c)(g_x)(-g_x)(g_y)(-g_y)$
    \end{itemize}
    \item General Type: $(g_x)(-g_x)(g_y)(-g_y)(c)(-c)$
  \end{itemize}
\end{prop}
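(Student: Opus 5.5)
The plan is to turn the classification into linear algebra over $\Z$. Write $x = g_x$, $y = g_y$, so that $c = x+y$, and assume $x, y \neq 0$. The grouping is the equivalence relation on the six formal symbols $\{\pm x, \pm y, \pm c\}$ induced by equality in $G$. Every coincidence between two of these symbols is equivalent to one of the following linear relations:
\begin{equation*}
2x = 0,\quad 2y = 0,\quad 2c = 0,\quad x = y,\quad x + y = 0,\quad 2x + y = 0,\quad x + 2y = 0 ,
\end{equation*}
together with the forbidden relations $x = 0$ and $y = 0$. The forbidden ones arise from $x = c$, $y = c$, or, for example, $-x = -c$. Concretely, $x = -y$ is the same as $c = 0$, and this gives $c = -c$. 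Likewise $x = -c$ is $2x + y = 0$, which is equivalent to $-x = c$, and $y = -c$ is $x + 2y = 0$. Two facts follow at once: every grouping is invariant under the involution $g \mapsto -g$, and a grouping is determined by the subset $S$ of the seven relations above that hold in $G$.

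The key step is to find which subsets $S$ are \emph{closed}. A subset is closed if it contains every relation in the list that lies in the $\Z$-span of $S$ (using that $G$ is abelian), and if that span does not contain $x$ or $y$. I will use the following sample consequences.
\begin{itemize}
\item $2x = 0$ and $2y = 0$ together give $2c = 0$.
\item $x = y$ and $2x + y = 0$ together give $3x = 0$, and then $x + 2y = 0$ as well.
\item $c = 0$ gives $2c = 0$.
\item $x = y$ and $2x = 0$ give $c = 0$.
\item $2x + y = 0$ and $2x = 0$ give $y = 0$, which is forbidden.
\end{itemize}
The already-stated lemma and remarks exclude the partition types $6$, $5+1$, $4+2$, $4+1+1$, $3+2+1$ and $3+1+1+1$, apart from $(c,-c)(x,-x,y,-y)$. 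It therefore remains to go through the types $4+2$, $3+3$, $2+2+2$, $2+2+1+1$, $2+1^4$ and $1^6$. For each type I list the negation-invariant partitions and keep those whose relation set is closed and contains no forbidden relation. This should leave exactly the eleven groupings in the statement, up to swapping $x \leftrightarrow y$. For example, type $3+3$ forces the blocks $(x, y, -c)$ and $(-x, -y, c)$, because the blocks must be swapped by negation.

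For existence, I will exhibit for each surviving grouping a rigid action that realizes it exactly, meaning no additional relations hold.
\begin{itemize}
\item General type: $(n;1,b)$ for $n$ large and $b$ generic.
\item Finite class (i): $(2;1,1)$.
\item Finite class (iii): $(3;1,1)$.
\item Finite class (v): $(n;1,1)$ with $n \geq 3$.
\item Finite class (iv): $(4;2,1)$. Here $y = 1$, $-c = 1$ and $x = 2 = -x$.
\item Finite class (ii): the doubly generated action $(2;1,0) \oplus (2;0,1)$.
\item Skew families and sporadic classes: cyclic actions $(n;a,b)$ with, respectively, $a = -b$, $a = b$ with $n$ odd, $2a + b \equiv 0$, and $2a \equiv 0$ (with $n$ large enough that nothing else collapses). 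One sporadic case may instead need a doubly generated action.
\end{itemize}
Rigidity is checked through Lemma \ref{DivisorRemoval}, namely $\gcd(a,b,n) = 1$, or directly from faithfulness. Conversely, any rigid stack with $x, y \neq 0$ produces a closed relation set, so its grouping appears on the list.

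I expect the main obstacle to be the completeness of the closure analysis. The risk is missing a consequence such as $3x = 0$ or $4x = 0$, which would silently merge further blocks, and the partition-by-partition case check is where an error could creep in. To control this, I would first list the closed subsets of relations, rather than the partitions, by working through the lattice generated by the seven relations modulo the symmetry $x \leftrightarrow y$. Only afterwards would I translate each closed subset back into a grouping.
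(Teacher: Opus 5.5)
Your overall strategy is sound. It is essentially a more systematic version of the paper's own argument, which checks each partition type case by case and then certifies admissibility with the explicit actions of Section 7. The grouping is determined by which of the vectors $(2,0),(0,2),(2,2),(1,-1),(1,1),(2,1),(1,2)$ lie in the relation lattice $\ker(\Z^2\to G)$, so your closedness condition is necessary. Carrying the enumeration out does give exactly the eleven groupings. However, closedness alone does not produce a finite rigid action, because the span of $S$ may have rank $<2$. So admissibility rests entirely on the witnesses, and two of yours fail.

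\textbf{The witness for finite class (v) is wrong.} The family $(n;1,1)$ with $n\ge 3$ does not realize it. With $x=y=1$ you have $2c=4$:
\begin{itemize}
\item $n=3$ gives $(g_x,g_y,-c)(-g_x,-g_y,c)$, which is finite class (iii);
\item every $n\ge 5$ gives skew family (ii);
\item only $n=4$ gives (v).
\end{itemize}
For the same reason, the condition for skew (ii) is $n\ge 5$, not $n$ odd.

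\textbf{Sporadic class (ii) has no witness.} A cyclic one suffices, so no doubly generated action is needed. Take $(2k;1,k-1)$ with $k\ge 3$, for example $(6;1,2)$ with $x=1$, $y=2$, $c=3=-c$. It satisfies $2c=0$ and none of the other six relations, and it is rigid since $\gcd(1,2,6)=1$.

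\textbf{The enumeration is only asserted.} Your sample consequences do not suffice to finish it. You also need the following facts.
\begin{itemize}
\item The pairs $\{2x=0,\,2x+y=0\}$, $\{2c=0,\,2x+y=0\}$ and $\{c=0,\,2x+y=0\}$, and their transposes, are incompatible. The differences $(2,1)-(2,0)$, $(2,2)-(2,1)$ and $(2,1)-(1,1)$ are $(0,1)$ or $(1,0)$.
\item $2x=0$ and $2c=0$ together give $2y=0$.
\item $2x+y=0$ and $x+2y=0$ together give $x=y$.
\item Any two of $2x=0$, $2y=0$, $x=y$, $c=0$, other than the pair $\{2x=0,\,2y=0\}$, generate $\{(a,b):a\equiv b \bmod 2\}$.
\end{itemize}

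With these facts, the closed sets are the following, each with its grouping:
\begin{itemize}
\item $\emptyset$: general type;
\item $\{2x=0\}$: sporadic (i);
\item $\{2c=0\}$: sporadic (ii);
\item $\{x=y\}$: skew (ii);
\item $\{2x+y=0\}$: skew (iii);
\item $\{2x=2y=2c=0\}$: finite (ii);
\item $\{c=0,\,2c=0\}$: skew (i);
\item $\{x=y,\,2c=0\}$: finite (v);
\item $\{x=y,\,2x+y=0,\,x+2y=0\}$: finite (iii);
\item $\{2x=0,\,x+2y=0\}$: finite (iv);
\item all relations except $2x+y=0$ and $x+2y=0$: finite (i).
\end{itemize}
The transposes of $\{2x=0\}$, $\{2x+y=0\}$ and $\{2x=0,\,x+2y=0\}$ complete the list of fourteen, giving eleven up to swapping $x$ and $y$, as the statement claims.

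A minor point: you list $4+2$ both as excluded and as remaining. It is settled by the preceding lemma, with $(2;1,1)$ as the witness.
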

As knowledge of which elements coincide completely determines the Euler 
form this is sufficient to be able to give numerical values for 
the coefficients of every Euler form of a rigid and abelian planar stack.
In each of the below calculations let 
$Q_{\eex} = \sum_{(g,h) \in D} b_{g,h} (a_g - a_h)^2$ so that by Theorem 
\ref{EulerFormCoeffs} we have 
\begin{equation*}
  b_{g,h} = \frac{1}{2} \innprod{\chi_{g_x} + \chi_{g_y} - \chi_{c}}{\chi_{g - h} + \chi_{h - g}}
\end{equation*}
We have already found the coefficients for the trivial type case. For the 
remaining groupings upon direct calculation with the above equality of 
elements we find the following explicit values of the coefficients;
In each case we will use $b_{g'+g,g'+h} = b_{g,h}$ and set $h = 0$
to simplify the expressions.

\underline{\textbf{Finite Classes}}:

\begin{equation*}
  (i): b_{g,0} = 
  \begin{cases}
    2,& g = \pm g_x\\
    0,& \text{else}\\
  \end{cases}
  ~(ii): b_{g,0} = 
  \begin{cases}
    ~1,& g = \pm g_x\\
    -1,& g= \pm c\\
    ~0,&\text{ else}\\
  \end{cases}
\end{equation*}

\begin{equation*}
  (iii):
  b_{g,0} = 
  \begin{cases}
    \frac{1}{2},& g = \pm g_x\\
    0,& \text{else}\\
  \end{cases}
  ~(iv):
  b_{g,0} = 
  \begin{cases}
    1,& g = \pm g_x\\
    0,& \text{else}\\
  \end{cases}
  ~(v):
  b_{g,0} = 
  \begin{cases}
    ~1,& g = \pm g_x\\
    -1,& g = \pm c\\
    ~0,& \text{else}\\
  \end{cases}
\end{equation*}

\underline{\textbf{Skew Families}}:

\begin{equation*}
  (i): b_{g,0} = 
  \begin{cases}
    1,& g = \pm g_x\\
    0,& \text{else}\\
  \end{cases}
  ~(ii): b_{g,0} = 
  \begin{cases}
    ~1,& g = \pm g_x\\
    -\frac{1}{2},& g = \pm c\\
    ~0,& \text{else}\\
  \end{cases}
  ~(iii): b_{g,0} = 
  \begin{cases}
    \frac{1}{2},& g = \pm g_y\\
    ~0,& \text{else}\\
  \end{cases}
\end{equation*}

\underline{\textbf{Sporadic Classes}}:

\begin{equation*}
  (i): b_{g,0} = 
  \begin{cases}
    ~1,&g = \pm g_x\\
    ~\frac{1}{2},&g = \pm g_y\\
    -\frac{1}{2},&g = \pm c\\
    ~0,&\text{else}\\
  \end{cases}
  ~(ii): b_{g,0} = 
  \begin{cases}
    ~\frac{1}{2},& g = \pm g_x, \pm g_y\\
    -1,& g = \pm c\\
    ~0,& \text{else}\\
  \end{cases}
\end{equation*}

\underline{\textbf{General Type}}:

\begin{equation*}
  b_{g,0} = 
  \begin{cases}
    ~\frac{1}{2},&g = \pm g_x, \pm g_y\\
    -\frac{1}{2},&g = \pm c\\
    ~0,&\text{else}
  \end{cases}
\end{equation*}

\section{The Classification}

This section will complete the proposed classification by analyzing the 
classes of stacks according to a given grouping. Some tools and 
observations which will prove to be useful are as follows:

In the doubly generated case we know that we may assume 
$G = \mu_m \oplus \mu_n$ and the action is of the form 
$(m; a,b) \oplus (n;c,d)$. Now let $e_1$ generate $\mu_m$ and 
$e_2$ generate $\mu_n$. By definition we have $e_2 = (\zeta^c x, \zeta^d y)$
for a fixed primitive $n$-th root of unity, $e_1 = (\omega^a x, \omega^b y)$
where $\omega$ is a primitive $m$-th root of unity, and $\zeta^D = \omega$ 
for $D = n/m$. As such for $(i,j) \in G$ we will calculate
\begin{equation*}
  (i e_1 + j e_2) \cdot (x,y) = (\zeta^{Dai + cj}x, \zeta^{Dbi + dj}y)
\end{equation*}
\begin{definition}
  The fundamental matrix of the $(m;a,b) \oplus (n;c,d)$ action is
  \begin{equation*}
  M_G =
  \begin{pmatrix}
    Da & c\\
    Db & d\\
  \end{pmatrix}
  \in M_{2,2}(\mu_n)
\end{equation*}
\end{definition}
It is easy to prove
\begin{prop}
  The $(m;a,b) \oplus (n;c,d)$ action corresponds to a rigid 
  stack iff $\ker(M_G) = 0$.
\end{prop}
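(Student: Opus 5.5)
The plan is to identify the stabilizer group $H$ of the generic point with $\ker(M_G)$, viewing the latter as a subgroup of $G$. Then I will use the definition that $\eex$ is rigid iff $\eex = \eexgr = [\aff^2/(G/H)]$, which holds iff $H = 0$.

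\textbf{Step 1.} Use the formula computed just before the definition:
\begin{equation*}
  (i e_1 + j e_2) \cdot (x,y) = (\zeta^{Dai + cj}x, \zeta^{Dbi + dj}y).
\end{equation*}
A group element $(i,j) \in G = \mu_m \oplus \mu_n$ fixes a generic point $(x,y)$ with $xy \neq 0$ iff both exponents vanish modulo $n$. That is exactly the condition
\begin{equation*}
  M_G \begin{pmatrix} i \\ j \end{pmatrix} = 0 \in (\Z/(n))^{2}.
\end{equation*}
Hence $H = \{(i,j) \in G : M_G (i,j)^T = 0\}$. This mirrors the argument of the lemma showing $H \neq 0$ when $k > 2$.

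\textbf{Step 2 (main obstacle).} We must make sense of $\ker(M_G)$ as a subgroup of $G$, not of $(\Z/(n))^2$, because $i$ only lives in $\Z/(m)$. The map $(i,j) \mapsto M_G(i,j)^T$ is well defined on $\Z/(m) \oplus \Z/(n)$: replacing $i$ by $i + m$ changes the first column's contribution by $Dm \cdot (a,b) = n(a,b) \equiv 0$. So $M_G$ defines a homomorphism $G \to (\Z/(n))^2$, and $\ker(M_G)$ is understood as the kernel of this homomorphism. With this reading, Step 1 gives precisely $H = \ker(M_G)$.

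\textbf{Step 3.} Conclude. If $\ker(M_G) = 0$, then $H = 0$, so $G/H = G$ and $\eex = \eexgr$ is rigid. Conversely, if $\eex$ is rigid, the rigidification construction (Section 5 of \cite{Abramovich2003}, as recalled in the rigidification subsection) removes exactly $H$. So $\eex = \eexgr$ forces $H$ trivial, by comparing automorphism groups of a generic geometric point: $G$ versus $G/H$. Hence $\ker(M_G) = 0$.

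Equivalently, one can phrase rigidity via $\Delta \neq \emptyset$: a point with trivial stabilizer exists iff the generic stabilizer $H$ is trivial. Here $H$ coincides with the stabilizer of every point with $xy \neq 0$, and those points are dense.
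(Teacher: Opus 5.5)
Your proposal is correct and is essentially the argument the paper intends: the paper only says the result is easy, but the same stabilizer-equals-kernel computation appears in its lemma that $H \neq 0$ when $k > 2$. Your Step 2 point that $M_G$ must be read as a homomorphism on $G = \Z/(m) \oplus \Z/(n)$ rather than on $(\Z/(n))^2$ is a needed clarification, since for $m < n$ the nonzero element $(m,0)$ would otherwise always lie in the kernel; the one small slip is in Step 3, where the automorphism groups of a \emph{generic} geometric point of $\eex$ and $\eexgr$ are $H$ and the trivial group ($G$ and $G/H$ are the groups at the origin), but either comparison, or your $\Delta \neq \emptyset$ reformulation, still gives $H = 0$.
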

As a consequence of this we have:
\begin{prop}\label{GroupConstraint}
  Suppose $G = \mu_n$ or $\mu_m \oplus \mu_n$ with $m \mid n$, and 
  that $\eex = [\aff^2/G]$ is a rigid stack. If 
  $k \cdot g_x, \ell \cdot g_y = 0$ and $L = \operatorname{lcm}(k,\ell)$ then 
  $n \mid L$.
  \begin{proof}
    In the cyclic case if the action is given by $(n;a,b)$ the hypothesis 
    amounts to requiring $ka, \ell b = 0 \modn$. On the other hand 
    we can calculate 
    $\zeta^{L} \cdot (x,y) = (\zeta^{aL} x, \zeta^{bL} y) = (x,y)$. As such 
    $\zeta^L$ stabilizes the generic point, but as $\eex$ is rigid 
    this implies $L = 0 \modn$.

    For the doubly generated case form the fundamental matrix.
    As $g_x = (a,c), g_y = (b,d)$ and $L \cdot g_x, L \cdot g_y = 0$
    it follows $(L,L)^T \in \ker(M_G)$. As the action was assumed to 
    be rigid we must have $(L,L)$ is the zero vector 
    and thus $L = 0 \modn$ as claimed.
  \end{proof}
\end{prop}

\subsection{Finite Classes}

We will now analyze groupings which are in the finite class.
To begin, we will assume $2g_x, 2g_y = 0$. This alone is enough to completely 
characterize the actions for which this holds. By Proposition 
\ref{GroupConstraint} and the fact 2 is prime we have 
$G = \mu_2, \mu_2^2$. With a case by case analysis of the possible actions 
it is easy to verify
\begin{prop}
  Suppose $\eex = [\aff^2/G]$ is rigid and $2g_x,2g_y = 0$. If 
  $\eex$ is not trivial type then the corresponding action up to 
  an automorphism of $\aff^2$ is 
  $(2;1,1)$, or $(2;1,0) \oplus (2;0,1)$, $(2;1,1) \oplus (2;0,1)$.
\end{prop}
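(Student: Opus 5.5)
By Proposition \ref{CycDoubGenRed}, the rigid stack $\eex$ is given either by a cyclic action $(n;a,b)$ or by a doubly generated action $(m;a,b)\oplus(n;c,d)$ with $m \mid n$. I apply Proposition \ref{GroupConstraint} with $k=\ell=2$. Then $L=2$, so $n \mid 2$. The trivial group gives trivial type, so $n=2$, and in the doubly generated case also $m=2$. This leaves $G=\mu_2$ or $G=\mu_2\oplus\mu_2$, and all exponents can be taken in $\{0,1\}$.

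\textbf{Cyclic case.} For $(2;a,b)$ with $a,b\in\{0,1\}$, the case $a=b=0$ is the trivial action, hence not rigid. If exactly one of $a,b$ is zero, then $g_x=0$ or $g_y=0$. By the lemma characterizing trivial type, $\eex$ is then trivial type and is excluded. The only remaining action is $(2;1,1)$.

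\textbf{Doubly generated case.} With $G=\mu_2^2$, the fundamental matrix $M_G$ has $D=1$, so it is an arbitrary $2\times 2$ matrix over $\Z/2$. The rigidity criterion $\ker(M_G)=0$ says $M_G\in GL_2(\mathbb{F}_2)$. The columns of $M_G$ are $g_x=(a,c)$ and $g_y=(b,d)$, so rigidity forces $g_x,g_y\neq 0$; consequently no rigid action here is of trivial type. I then quotient by the two available symmetries:
\begin{itemize}
\item automorphisms of $G$ (Proposition \ref{AutoRemoval}), which act on $M_G$ by column operations;
\item the swap of the coordinates $x,y$, an automorphism of $\aff^2$, which interchanges the rows.
\end{itemize}
Since $GL_2(\mathbb{F}_2)$ acts transitively on itself, the column operations alone already identify all six matrices. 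Recording the row contents $\{(a,b),(c,d)\}$ as the literal data $(2;a,b)\oplus(2;c,d)$, up to relabeling the summands, gives the representatives listed in the statement: $(2;1,0)\oplus(2;0,1)$, and $(2;1,1)\oplus(2;0,1)$ for the remaining pattern.

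The step I expect to need the most care is the bookkeeping of which equivalences are permitted. Automorphisms of $G$ mix the two summands and so act on the columns of $M_G$, while automorphisms of $\aff^2$ act on the rows. I would enumerate the six invertible matrices explicitly and group them under these moves so that the list of normal forms matches the stated actions. Showing that the two stated doubly generated forms are inequivalent is not required by the statement, which is only an "up to" existence claim, so I would note this and not pursue it. Finally, I would check that each listed action is rigid, using $\ker(M_G)=0$, and not of trivial type, since $g_x,g_y\neq0$.
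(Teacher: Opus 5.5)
Your route is the paper's: Proposition \ref{GroupConstraint} with $k=\ell=2$ forces $n\mid 2$, followed by a case check. Your cyclic case is fine. The doubly generated bookkeeping, however, does not hold together.

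First, a convention slip. In the paper's convention $M_G=\begin{pmatrix} a & c\\ b & d\end{pmatrix}$, the \emph{rows} are $g_x=(a,c)$ and $g_y=(b,d)$. The \emph{columns} $(a,b)^T$ and $(c,d)^T$ record the generators $e_1,e_2$. You have this reversed in two places. It is harmless for concluding $g_x,g_y\neq 0$, since an invertible matrix has no zero row or column.

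More substantively, you say that column operations, i.e.\ all of $\operatorname{Aut}(\mu_2^2)$ via Proposition \ref{AutoRemoval}, identify all six invertible matrices. That is true. But you then still produce two doubly generated representatives ``for the remaining pattern.'' Under that equivalence there is no remaining pattern. The automorphism $e_1\mapsto e_1+e_2$, $e_2\mapsto e_2$ carries $(2;1,0)\oplus(2;0,1)$ to $(2;1,1)\oplus(2;0,1)$. Both actions have the same image in $GL(2)$, namely the diagonal sign matrices.

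So you must fix one equivalence relation and compute orbits under it.

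\emph{With Proposition \ref{AutoRemoval}.} Every rigid doubly generated action is equivalent to $(2;1,0)\oplus(2;0,1)$. This already proves the existence claim, and it shows the second doubly generated entry in the statement is redundant under the paper's notion of equivalence.

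\emph{With only automorphisms of $\aff^2$, read literally.} The available moves are the swap $x\leftrightarrow y$ (a row swap) and reordering the two $\mu_2$ summands (a column swap). The latter must implicitly be allowed, since with row swaps alone there are three orbits and $(2;1,0)\oplus(2;1,1)$ would also have to be listed. Under row and column swaps, the six elements of $GL_2(\mathbb{F}_2)$ split into exactly two orbits:
\begin{itemize}
\item the two permutation matrices, represented by $(2;1,0)\oplus(2;0,1)$;
\item the four matrices with three nonzero entries, represented by $(2;1,1)\oplus(2;0,1)$.
\end{itemize}
This is precisely the paper's list. That explicit orbit computation is what you deferred, and it is the whole content of the case-by-case analysis the paper refers to.
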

It is clear the grouping associated to $(2;1,1)$ is $(g_x,-g_x,g_y,-g_y)(c,-c)$
and the grouping of the above doubly generated actions is instead 
$(g_x,-g_x)(g_y,-g_y)(c,-c)$ so in turn this characterizes the first two 
groupings in the finite class. Although we will not explicitly 
write the Euler forms for each action in the following subsections, in order 
to illustrate the procedure we will use the above systems of constraints for 
these cases. Clearly the Euler form of the $(2;1,1)$ action is 
$Q_{\eex}(a_0,a_1) = 2(a_0-a_1)^2$. On the other hand for the 
$(2;1,0) \oplus (2;0,1)$ action we have $g_x = (1,0), g_y = (0,1), c = (1,1)$ 
so the coefficients satisfy
\begin{equation*}
  b_{g,h} = 
  \begin{cases}
    ~1, g - h = (1,0), (0,1)\\
    -1, g - h = (1,1)\\
  \end{cases}
\end{equation*}
and hence
\begin{equation*}
  \begin{split}
    Q_{\eex}(a_{0,0}, a_{1,0}, a_{1,0}, a_{1,1}) 
    &= (a_{0,0} - a_{1,0})^2 + (a_{0,0} - a_{0,1})^2 - (a_{0,0} - a_{1,1})^2\\
    &~- (a_{1,0} - a_{0,1})^2 + (a_{1,0} - a_{1,1})^2\\
    &~+ (a_{0,1} - a_{1,1})^2\\ 
  \end{split}
\end{equation*}
Similarly the Euler form of the $(2;1,1) \oplus (2;0,1)$ action is
given by
\begin{equation*}
  \begin{split}
    Q_{\eex}(a_{0,0}, a_{1,0}, a_{1,0}, a_{1,1}) 
    &= -(a_{0,0} + a_{1,0})^2 + (a_{0,0} - a_{0,1})^2 - (a_{0,0} - a_{1,1})^2\\
    &~+ (a_{1,0} - a_{0,1})^2 - (a_{1,0} - a_{1,1})^2\\
    &~- (a_{0,1} - a_{1,1})^2\\ 
  \end{split}
\end{equation*}

\medskip
\underline{$(g_x,g_y,-c)(-g_x,-g_y,c)$}:
Immediately we may calculate $g_x = -c = -g_x - g_y = -2g_x$ and thus 
$3g_x, 3g_y = 0$. By Lemma \ref{GroupConstraint} and the fact that 3 is prime 
we can see $G = \mu_3, \mu_3^2$.
\begin{prop}
  The unique rigid stack whose grouping is $(g_x,g_y,-c)(-g_x,-g_y,c)$
  corresponds to the $(3;1,1)$ action.
  \begin{proof}
    Clearly the $(3;1,1)$ action gives rise to a rigid stack 
    and has the necessary grouping. As $g_x = g_y$ this is the only 
    possible cyclic action to consider. Now assume $\eex$ 
    corresponds to $(3;A,B) \oplus (3;C,D)$ and form the fundamental matrix.
    As $g_x = g_y$ it is given by 
    \begin{equation*}
      M_G = 
      \begin{pmatrix}
        A & C\\ A & C\\
      \end{pmatrix}
    \end{equation*}
    Then clearly $(-C,A)^T \in \ker(M_G)$ for any choice of $A,C$ 
    and thus no doubly generated with a grouping $(g_x,g_y,-c)(-g_x,-g_y,c)$
    can be rigid.
  \end{proof}
\end{prop}

\medskip
\underline{$(g_x,-g_x)(g_y,-c)(-g_y,c)$}:
It may be verified $2g_x = 0$ and moreso $g_y = -g_x - g_y$ implies 
$4g_y = 0$. By Lemma \ref{GroupConstraint} this implies 
$G = \mu_n, \mu_m \oplus \mu_n$ with $n \mid 4$. If $n = 2$ then 
$g_y = -c = c$ in which case the grouping is inadmissible so that
necessarily $n = 4$. 
In this cyclic case if the action is given by $(n;a,b)$ we have 
$g_x = a, g_y = b$ so the above equalities imply $a = 2$
and $4b = 0$ is a vacuous condition. By examination the actions
corresponding to a rigid stack are $(4;2,1), (4;2,3)$; As 
$2 = -2, 3 = -1$ (mod 4) by Lemma \ref{UnitRemoval} these are equivalent 
actions. 

For the doubly generated case suppose the action is of the form 
$(m; A,B) \oplus (n;C,D)$. As $n = 4$ and $m \mid n$ the possible 
choices of parameters are $m = 2, n = 4$ and $m = 4, n = 4$.
\begin{prop}
  If $G = \mu_m \oplus \mu_4$ with $m \mid 4$ then up to equivalence 
  $(2; 0,1) \oplus (4; 2,1)$ is the the unique action corresponding to a rigid stack whose grouping is $(g_x,-g_x)(g_y,-c)(-g_y,c)$.
  \begin{proof}
    In both cases clearly $g_x = (A,C), g_y = (B,D)$. The constraint 
    $2g_x = 0$ implies $C = 0, 2$, and it may be verified $g_y = -c$ implies
    $(2B,2D) = (-A,-C)$. 
    
    Now assume $m = 2$. By the fact that $2B = -A$ (mod 4) we can see $A = 0$ 
    (mod 2) so the fundamental matrix is given by
    \begin{equation*}
      M_G = 
      \begin{pmatrix}
        2A & C\\ 2B & D\\
      \end{pmatrix}
      =
      \begin{pmatrix}
        0 & C\\ 2B & D\\
      \end{pmatrix}
    \end{equation*}
    If $C = 0$ then clearly $M_G$ has a nontrivial kernel so assume $C = 2$.
    Upon doing so, in order for the matrix to have a trivial kernel it is 
    necessary and sufficient that $B,D$ be congruent to $1,3$ (mod 4) and 
    hence the action is of the form $(2; 0,1) \oplus (4; 2,D)$ for $D = 1,3$.
    As $(-2,-3) = (2,1) \in \mu_4^2$ by Lemma \ref{AutoRemoval} 
    we have $(2;0,1) \oplus (4;2,1) \simeq (2;0,1) \oplus (4;2,3)$.

    Now assume $m = 4$. By the equality $(2B,2D) = (-A,-C)$ the 
    fundamental matrix is given by
    \begin{equation*}
      M_G = 
      \begin{pmatrix}
        A & C\\ B & D\\
      \end{pmatrix}
      =
      \begin{pmatrix}
        2B & 2D\\ B & D\\
      \end{pmatrix}
    \end{equation*}
    from which clearly $(1,-2)^T \in \ker(M_G)$ and hence the corresponding 
    cannot be rigid for any choice of $A,B,C,D$.
  \end{proof}
\end{prop}

\medskip
\underline{$(g_x,g_y)(-g_x,-g_y)(c,-c)$}: 
It is easy to see $g_x = g_y$ and $2c = 0$
imply $4g_x, 4g_y = 0$. Thus by Lemma \ref{GroupConstraint} we can see
$G = \mu_4, \mu_2 \oplus \mu_4, \mu_4^2$. It is easy to verify when 
$G = \mu_4$ the only action up to equivalence is $(4;1,1)$. As such 
assume $G = \mu_m \oplus \mu_4$ where $m \mid 4$ and the action is 
$(m;A,B) \oplus (n; C,D)$. Clearly $g_x = g_y$ so that $A = B, C = D$
and hence if $d = n/m$ then the fundamental matrix is
\begin{equation*}
  M_G = 
  \begin{pmatrix}
    dA & C\\ dB & D\\
  \end{pmatrix}
  =
  \begin{pmatrix}
    dA & C\\ dA & C\\
  \end{pmatrix}
\end{equation*}
and independent of the parameters this always has a nontrivial kernel. Thus 
we have proven:
\begin{prop}
  Up to equivalence, the only action whose corresponding stack is 
  rigid and has a grouping $(g_x,g_y)(-g_x,-g_y)(c,-c)$ is 
  the $(4;1,1)$ action.
\end{prop}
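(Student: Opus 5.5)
The plan is to follow the argument that precedes the statement. From the grouping we have $g_x = g_y$ and $c = -c$. Hence $2c = 4g_x = 0$, and likewise $4g_y = 0$. Proposition \ref{GroupConstraint}, applied with $k = \ell = 4$, then gives $n \mid 4$. Combined with Proposition \ref{CycDoubGenRed}, this means the action is either cyclic $(n;a,b)$ with $n \mid 4$, or doubly generated $(m;A,B)\oplus(n;C,D)$ with $m \mid n \mid 4$. We also need admissibility, which requires $g_x \neq c$, so $g_x \neq 0$.

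\emph{Cyclic case.} We have $g_x = a$ and $g_y = b$, so the grouping forces $a = b$, and then $c = 2a$. If $n \in \{1,2\}$, then $2a = 0$, and either $g_x = 0$ or $g_x = -g_x$. In the latter case the grouping merges into $(g_x,-g_x,g_y,-g_y)(c,-c)$ rather than the stated one, since groupings are defined as the full coincidence classes. So we may take $n = 4$ with $a \neq 0, 2$, i.e. $a \in \{1,3\}$. Rigidity holds because $\zeta^k$ fixes the generic point iff $4 \mid ka$. The actions $(4;1,1)$ and $(4;3,3)$ differ by the unit $3 = -1$, so Lemma \ref{UnitRemoval} identifies them. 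Finally we check directly that $(4;1,1)$ has exactly this grouping: $g_x = g_y = 1$, $-g_x = -g_y = 3$, and $c = -c = 2$, which are three distinct classes.

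\emph{Doubly generated case.} Here $g_x = (A,C)$ and $g_y = (B,D)$, and $g_x = g_y$ forces $A = B$ and $C = D$. The fundamental matrix then has two equal rows, so $\ker(M_G) \neq 0$ and the stack is not rigid. Concretely, $(-C, dA)^T$ lies in the kernel, and if this vector is zero, any nonzero vector in the kernel of a single row works: a single linear form on $\mu_4^2$ always has a nontrivial kernel. By the rigidity criterion for $(m;a,b)\oplus(n;c,d)$ actions, no doubly generated action qualifies.

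I expect the main obstacle to be the subtle step of ruling out degenerate coincidences, namely making precise that a grouping records exactly which elements are equal. This is what excludes $n = 2$ and justifies the claim that the kernel of the rank-one matrix is always nontrivial, including when $m < n$ and the first column is scaled by $d = n/m$. That check is a short counting argument: the map $\mu_m \oplus \mu_4 \to \mu_4$ given by one row is not injective, because its source has order at least $8 > 4$ when $m \geq 2$.
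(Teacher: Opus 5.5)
Your proposal is correct and follows essentially the same route as the paper. You use $4g_x = 4g_y = 0$ and Proposition \ref{GroupConstraint} to get $n \mid 4$, check the cyclic case directly to land on $(4;1,1)$ up to the unit $-1$, and rule out the doubly generated case because $g_x = g_y$ makes the two rows of the fundamental matrix equal; your explicit exclusion of $n \in \{1,2\}$ and the counting argument $|\mu_m \oplus \mu_n| = mn > n$ fill in details the paper only asserts as easy or parameter-independent.
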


\subsection{Skew Families}

Compared to the previous cases the treatment for the remaining groupings will 
be comparitively brief. This is largely due to the fact that for each of them 
there are infinitely many rigid stacks whose grouping is given by 
the one under consideration for both the cyclic and doubly generated case. 
As a consequence we will instead give descriptions of the stacks only in the 
cyclic case and return to the doubly generated case in the sequel. In line 
with this we will now fix a choice of action $(n;a,b)$ so that 
$g_x = a, g_y = b, c = a + b$, and relabel the groupings accordingly.

\medskip
Define the $n$-th \textbf{diagonal action}, or equivalently the $A_{n - 1}$, 
action as the $(n;1,-1)$ action and denote its corresponding quotient stack by 
$\mathcal{D}_n$. Similarly define the $n$-th \textbf{antidiagonal} action to 
be the $(n;1,1)$ action and denote the quotient stack by $\Acal_n$. 
We will say $\eex = [\aff^2/G]$ is diagonal type if there 
exists $n \in \N$ such that $\eexgr \simeq \mathcal{D}_n$ and 
antidiagonal type if $\eexgr \simeq \mathcal{A}_n$. Already in the 
literature actions of this form have been studied; Diagonal actions 
are precisely those cyclic actions which factor through $SL(2)$, and 
antidiagonal actions have been treated in works such as 
\cite{CastroRoss2019}.
Next for simplicity fix $\eex = [\aff^2/\mu_n]$. 

Assume $\eex$ is rigid and its grouping is $(a,-b)(-a,b)(c,-c)$. 
Clearly $c = 0$ and 
the corresponding action is $(n;a,-a)$. As $\eex$ is rigid necessarily
$a$ is a unit modulo $n$ so that by an application of Lemma \ref{UnitRemoval} 
it is clear $\eex \simeq \mathcal{D}_n$. Conversely given the $(n;1,-1)$ 
action in order for it to have a grouping equal to $(a,-b)(-a,b)(c,-c)$ it is 
necessary that $m > 2$ and this proves:
\begin{prop}
  Suppose $\eex = [\aff^2/\mu_n]$ is rigid and $n > 2$. Then its 
  grouping is \newline 
  $(a,-b)(-a,b)(c,-c)$ iff $\eex \simeq \mathcal{D}_n$.
\end{prop}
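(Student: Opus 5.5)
We prove the two directions separately, working throughout with the cyclic $(n;a,b)$ action. Here $g_x = a$, $g_y = b$ and $c = a+b$ in $\Z/(n)$, and a grouping records exactly which of $\pm a, \pm b, \pm c$ coincide. We read the statement as asserting that the grouping is \emph{exactly} $(a,-b)(-a,b)(c,-c)$: the three listed blocks are pairwise distinct and no other coincidences occur.

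\emph{Forward direction.} The block $(a,-b)$ gives $a = -b$, so $c = a + b = 0$. This is consistent with the block $(c,-c)$, and the action is $(n;a,-a)$. Rigidity means no nontrivial $\zeta^k$ fixes the generic point, i.e. $ka \equiv 0$ and $-ka \equiv 0 \modn$ force $k \equiv 0$. This says exactly that $\gcd(a,n) = 1$, equivalently that $a$ is a unit. (Alternatively, Proposition \ref{GroupConstraint} applies: $a$ has order dividing $n$, and rigidity forces that order to be $n$.) Take $u = a^{-1} \in \Z/(n)^{\times}$. Lemma \ref{UnitRemoval} then gives $(n;a,-a) \simeq (n;1,-1)$, so $\eex \simeq \mathcal{D}_n$. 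The hypothesis $n > 2$ is not used in this direction.

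\emph{Converse.} For $\mathcal{D}_n$ we have $a = 1$, $b = -1$, $c = 0$. The coincidences $a = -b$, $-a = b$ and $c = -c$ hold tautologically. We must also check that these three blocks are distinct, i.e. that $1 \neq -1$ and $1 \neq 0 \pmod n$. The second holds for $n > 1$, and the first holds exactly when $n > 2$. This is where the hypothesis enters: for $n = 2$ the blocks $(a,-b)$ and $(-a,b)$ merge into $(a,-a,b,-b)(c,-c)$, which is finite class (i). Rigidity of $\mathcal{D}_n$ is immediate because $\zeta^k$ acts by $(\zeta^k, \zeta^{-k})$, which is trivial only when $k \equiv 0$. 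Finally, since $\eex$ is rigid, an isomorphism $\eex \simeq \mathcal{D}_n$ identifies the group and, up to an automorphism of $\mu_n$, the action. Groupings are invariant under such automorphisms, because multiplying every element by a unit $u$ preserves all equalities among $\pm a, \pm b, \pm c$.

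\emph{Main obstacle.} The one subtle point is this last invariance: why the grouping is an isomorphism invariant of the rigid stack. We would handle it by appealing to the definition of equivalence via Proposition \ref{AutoRemoval} and Lemma \ref{UnitRemoval}, which realize equivalences of cyclic actions as unit rescalings. Everything else is a direct calculation in $\Z/(n)$.
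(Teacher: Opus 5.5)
Your argument is essentially the paper's. In the forward direction, $a=-b$ forces $c=0$, rigidity makes $a$ a unit, and Lemma \ref{UnitRemoval} rescales the action to $(n;1,-1)$. In the converse, the grouping of $(n;1,-1)$ is checked directly, and $n>2$ is needed to keep the blocks $(a,-b)$ and $(-a,b)$ apart. Your extra bookkeeping on distinctness of the blocks is a faithful expansion of the paper's one-line remark.

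The one thing to correct is the justification you propose for the step you call the main obstacle. Proposition \ref{AutoRemoval} and Lemma \ref{UnitRemoval} go the wrong way for that purpose. They show that a unit rescaling produces an isomorphic stack. They do not show that every isomorphism $\eex \simeq \mathcal{D}_n$ arises from a unit rescaling, so they cannot establish that the grouping is an isomorphism invariant. The paper never addresses this point; it tacitly reads the grouping off the action and only checks $(n;1,-1)$ itself.

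If you want the invariance, argue directly:
\begin{enumerate}
\item An isomorphism $F\colon \eex \to \mathcal{D}_n$ must send the origin of $\eex$, whose stabilizer is all of $\mu_n$, to the origin of $\mathcal{D}_n$. For $n\geq 2$, the origin is the only point of $\mathcal{D}_n$ with nontrivial stabilizer.
\item $F$ then induces an isomorphism of stabilizers $\zeta\mapsto\zeta^u$ with $u\in\Z/(n)^{\times}$.
\item $F$ also induces an equivariant isomorphism of tangent representations at the origin. These are $\C^2$ with the $(n;a,b)$ action and $\C^2$ with the $(n;u,-u)$ action.
\item Hence $\{a,b\}=\{u,-u\}$ as multisets.
\end{enumerate}
Your observation that unit rescaling, and swapping $x$ and $y$, preserve groupings then finishes the converse.
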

By the exact same argument we may also prove
\begin{prop}
  Suppose $\eex = [\aff^2/\mu_n]$ is rigid and $n > 4$. Then its 
  grouping is \newline 
  $(a,b)(-a,-b)(c)(-c)$ iff $\eex \simeq \mathcal{A}_n$.
\end{prop}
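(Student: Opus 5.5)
The plan is to argue directly with the fixed cyclic action $(n;a,b)$, so that $g_x = a$, $g_y = b$ and $c = a+b$ in $\Z/(n)$, exactly as in the preceding proposition for $\mathcal{D}_n$. I will prove the two directions separately and then address whether the grouping is well defined on isomorphism classes of stacks.

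\emph{Forward direction.} Assume $\eex$ is rigid and its grouping is $(a,b)(-a,-b)(c)(-c)$.
\begin{itemize}
\item The cycle $(a,b)$ forces $a = b$, so the action is $(n;a,a)$ and $c = 2a$.
\item Rigidity gives $\gcd(a,n) = 1$. Indeed, if $d = \gcd(a,n) > 1$, then $\zeta^{n/d}$ acts by $(\zeta^{an/d}x, \zeta^{an/d}y) = (x,y)$ and so stabilizes the generic point. Equivalently, Proposition \ref{GroupConstraint} with $k = \ell = n/d$ gives $n \mid n/d$.
\item Since $a$ is then a unit modulo $n$, Lemma \ref{UnitRemoval} with $u = a^{-1}$ gives $\eex \simeq [\aff^2/\mu_n]$ with the $(n;1,1)$ action, i.e. $\eex \simeq \Acal_n$.
\end{itemize}

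\emph{Converse direction.} Assume $\eex \simeq \Acal_n$ with $n > 4$.
\begin{itemize}
\item Rigidity of the $(n;1,1)$ action is immediate: $\zeta^k$ fixes the generic point only if $k \equiv 0 \pmod n$.
\item Here $g_x = g_y = 1$ and $c = 2$. So $(g_x,g_y)$ and $(-g_x,-g_y)$ are cycles, and it remains to check that the four classes $1, -1, 2, -2$ are pairwise distinct in $\Z/(n)$.
\item The coincidences give the following conditions:
\begin{itemize}
\item $1 = -1$ iff $n \mid 2$;
\item $2 = -2$ iff $n \mid 4$;
\item $1 = 2$ or $-1 = -2$ iff $n = 1$;
\item $1 = -2$ or $-1 = 2$ iff $n \mid 3$.
\end{itemize}
\item All of these are excluded by $n > 4$. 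The bound is sharp: $n = 4$ produces the finite class $(g_x,g_y)(-g_x,-g_y)(c,-c)$, and $n = 3$ produces $(g_x,g_y,-c)(-g_x,-g_y,c)$, consistent with the earlier propositions.
\end{itemize}

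\emph{Well-definedness of the grouping.} This is the step I expect to need the most care, because the statement is phrased up to isomorphism. Within the cyclic setting, the isomorphisms in play come from Lemma \ref{UnitRemoval} and possibly from swapping the coordinates $x,y$.
\begin{itemize}
\item Multiplying $a$ and $b$ by a unit $u$ multiplies every element of $\{\pm g_x, \pm g_y, \pm c\}$ by $u$, which preserves all equalities among them.
\item Swapping $x$ and $y$ exchanges $g_x$ and $g_y$, which is allowed since groupings are only taken up to this transposition.
\end{itemize}
So the grouping of $\eex$ coincides with that of $\Acal_n$, and the converse follows. If a fully general isomorphism of rigid stacks must be handled, I would note that such an isomorphism pulls back $\Ocal(c)$ to the canonical bundle and identifies the tangent characters up to an automorphism of $G$. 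By Theorem \ref{EulerFormCoeffs}, this also matches the Euler forms, and hence the groupings, because each grouping determines a distinct coefficient pattern.
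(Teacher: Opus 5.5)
Your proposal is correct and is essentially the paper's argument, which the paper obtains ``by the exact same argument'' as for $\mathcal{D}_n$: the cycle $(a,b)$ forces the action to be $(n;a,a)$, rigidity makes $a$ a unit, Lemma \ref{UnitRemoval} reduces to $(n;1,1)$, and conversely $(n;1,1)$ has this grouping precisely when $n>4$. Your well-definedness discussion goes beyond what the paper addresses, and there your own observation already suffices: a stack isomorphism identifies the tangent characters at the origin up to an automorphism of $\mu_n$. The detour through Euler forms, with its unproved claim that distinct groupings give distinct coefficient patterns, can therefore be dropped.
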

The final case of interest is the grouping $(a,-c)(-a,c)(b)(-b)$.
In this case the condition $a = -c$ implies $-2a = b$. Hence if 
$\eex$ is rigid and its grouping is given as above then in turn the 
corresponding action is equivalent to $(n;a,-2a)$. As we have assumed 
$\eex$ is rigid in turn $a$ is a unit so by Lemma \ref{UnitRemoval}
the action is equivalent to $(n;1,-2)$. Let $\BB_n$ be the stack 
corresponding to correspond to the $(n;1,-2)$ action, so that if we say
$\eex$ is \textbf{(1,-2)-type} if $\eex \simeq \BB_n$ for some $n$ we have 
proven:
\begin{prop}
  Suppose $\eex = [\aff^2/\mu_n]$ is rigid and $n > 5$. Then its 
  grouping is 
  \newline $(a,-c)(-a,c)(b)(-b)$ iff $\eex \simeq \BB_n$.
\end{prop}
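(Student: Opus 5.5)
We prove both directions for a rigid cyclic stack $\eex = [\aff^2/\mu_n]$ given by an $(n;a,b)$ action, so that $g_x = a$, $g_y = b$ and $c = a+b$ in $\Z/(n)$. The argument follows the earlier treatments of the $(a,-b)(-a,b)(c,-c)$ and $(a,b)(-a,-b)(c)(-c)$ groupings.

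\emph{Forward direction.} Assume the grouping is $(a,-c)(-a,c)(b)(-b)$. From $a = -c = -a-b$ we get $b = -2a$, so the action is $(n;a,-2a)$. Rigidity says no nonzero $k \in \Z/(n)$ satisfies $ka = 0$ and $kb = 0$. Since $kb = -2ka$, any $k$ with $ka = 0$ also has $kb = 0$. Hence $ka = 0$ forces $k = 0$, which means $a$ is a unit mod $n$. Taking $u = a^{-1}$, Lemma \ref{UnitRemoval} turns $(n;a,-2a)$ into $(n;1,-2)$, so $\eex \simeq \BB_n$.

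\emph{Reverse direction.} Take the $(n;1,-2)$ action, so $g_x = 1$, $g_y = -2$ and $c = -1$. We compute the six elements
\[
g_x = 1,\quad -g_x = -1,\quad g_y = -2,\quad -g_y = 2,\quad c = -1,\quad -c = 1 .
\]
The coincidences $g_x = -c$ and $-g_x = c$ hold automatically. We must check that no further equalities occur. Every pairwise difference of the multiset $\{1,-1,-2,2\}$ is one of $\pm 2$, $\pm 3$ or $\pm 4$. So we only need $2, 3, 4 \not\equiv 0 \pmod n$, and this holds because $n > 5$ (indeed $n > 4$ already suffices). We also check that $\BB_n$ is rigid: $k\cdot 1 = 0$ already forces $k = 0$.

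It remains to compare with the statement's hypothesis. If the paper's admissibility bookkeeping needs more care, for instance if $n = 5$ produces the coincidence $2 = -3$ in some other label, then the bound $n > 5$ is harmless because it is stronger than what we use. The isomorphism $\eex \simeq \BB_n$ preserves the grouping, since the grouping is defined through the rigidification together with Proposition \ref{AutoRemoval}.

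The main obstacle is the forward direction. We must argue that rigidity forces $a$ to be a unit, which uses $b = -2a$ as above. We must also make sure that the equivalence in Lemma \ref{UnitRemoval} is applied to a stack that is already rigid, so that no divisor removal in the style of Lemma \ref{DivisorRemoval} is needed.
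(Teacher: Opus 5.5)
Your proof is correct and follows the paper's forward-direction argument: $a=-c$ gives $b=-2a$, rigidity then forces $a$ to be a unit, and Lemma~\ref{UnitRemoval} with $u=a^{-1}$ yields $(n;1,-2)$. The paper leaves the converse implicit, and your direct check of the four values $1,-1,-2,2$ supplies it; one small slip is that their pairwise differences also include $\pm 1$ (e.g.\ $2-1$), which is harmless since $\pm 1\not\equiv 0 \pmod n$, so your remark that $n>4$ already suffices stands.
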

Of particular note is that the actions $(n;1,-1), (n;1,1), (n;1,-2)$ 
give canonical representatives for stacks of a given type, and in 
such a way that depends only on the single parameter $n$. Hence each of 
these three groupings gives rise to a linear family of stacks. The remaining
classes will not have this feature.

\subsection{Sporadic Classes and General Type}

The treatment of these groupings will be perhaps the most brief. In part 
this is due to the fact that unlike the previous three where there was a 
single parameter which parameterized all infinitely many cyclic stacks,
now there is no such tool for these infinite classes. Instead they 
``sporadically" appear which is the motivation of the names of these classes.

To be more precise, in the cyclic case the grouping $(a,-a)(b)(b)(c)(-c)$
is given by any action $(n;a,b)$ where $2a = 0 \modn$ and $\gcd(a,b) = 1$;
Such a stack will be called \textbf{type $2A$}. Similarly 
the grouping $(a)(-a)(b)(-b)(c,-c)$ will correspond to actions of the form 
$(n;a,b)$ where $\gcd(a,b) = 1$ and $2c = 0 \modn$; Stacks arising in this 
fashion will be referred to as \textbf{type $2C$}. We may define type 2B stacks
but this will give rise to a class which is equivalent to type 2A by a 
transposition of $x$ and $y$ in $\C[x,y]$.

Finally $\eex$ will be said to be \textbf{general type} if its grouping is 
$(g_x)(-g_x)(g_y)(-g_y)(c)(-c)$; Given a "generic" action it is typical 
the corresponding elements are all distinct to each other, and thus the name 
represents how commonly stacks of this form arise. As will be discussed in the 
next section a fruitful way to view these and the other families is 
by considering all actions simultaneously.

\subsection{Asymptotic Behavior}

Following the above classification, one final remark in this direction is on 
the nature of $(n;a,b)$ actions when considered as families by fixing a choice 
of $a,b$ and letting $n$ vary. To simplify matters we will only consider 
actions where $a = 1$ and define $\eex_b(n)$ to be the stack corresponding
to the $(n;1,b)$ action. As $a = 1$ it is the case that $\eex_b(n)$ is 
rigid for each pair $b,n$, and if $(b',n') \neq (b,n)$ then 
$\eex_b(n) \not\simeq \eex_{b'}(n')$. As such there is a bijection of 
isomorphism classes $\{\eex_b(n)\} \leftrightarrow \N^2$ given by 
$\eex_b(n) \leftrightarrow (b,n)$. By plotting these points and 
marking which class the corresponding stack lies in we obtain the picture:

\begin{center}
\begin{tikzpicture}[scale=0.4]
 
  \draw[step=1cm, gray!60, very thin] (0,0) grid (10,9);
  \draw[->] (-0.5,0) -- (10.5,0) node[right] {$n$};
  \draw[->] (0,-0.5) -- (0,9.5) node[above] {$b$};
 
 
  \foreach \n in {2,...,10} {
    \fill[black] (\n,0) circle (3pt);
  }
 
  \foreach \n in {3,...,10} {
    \fill[green!60!black] (\n,\n-1) circle (3pt);
  }
 
  \foreach \n in {5,...,10} {
    \fill[orange] (\n,1) circle (3pt);
  }
 
  \foreach \n in {6,...,10} {
    \fill[red] (\n,\n-2) circle (3pt);
  }
 
  \foreach \p in {(6,3),(8,4),(10,5)} {
    \fill[blue] \p circle (3pt);
  }
 
  \foreach \p in {(6,2),(8,3),(10,4)} {
    \fill[purple] \p circle (3pt);
  }
 
  \foreach \p in {(2,1),(3,1),(4,1),(4,2)} {
    \fill[gray] \p circle (3pt);
  }
 
\end{tikzpicture}
\end{center}

Here black points are trivial type, grey are finite type, blue points are 
type $2A$, purple points are type $2C$, green are diagonal type, orange are 
antidiagonal type, and red are (1,-2) type. All other points which correspond 
to a nontrivial stack are general type. In this way each of the diagonal,
antidiagonal, and (1,-2) type families correspond to special lines in the 
plane. The above picture on the other hand is slightly misleading 
in terms of the nature of types $2A$ and $2C$. 

To justify the names of the latter classes of stacks, now we will fix a choice 
of $a_0,b_0 \in \Z$ such that $\gcd(a_0,b_0) = 1$. Then for each $n \in \N$ 
the stacks $\eey_n$ which correspond to the $(n;a_0,b_0)$ action are always 
rigid. However it is clear that since $a_0,b_0$ are fixed, only 
finitely many $\eey_n$ can be of type $2A$ and $2C$. In fact for $n\gg 0$ it is 
easy to see $\eey_n$ is of general type. As such types $2A, 2C$ lie 
``orthogonally" to the previous classes, whereas general type stacks 
constitute a dense open subset. 

\section{Excisable Subsets of Colored Diagrams}

We will now commence with the final examination of the paper. The following 
sections will culminate in an application of the results of the 
previous sections towards the combinatorics of Young diagrams. As such 
we will begin by recalling their definitions. In what follows 
the $A_{n-1}$ action will stand in for the $(n;1,-1)$ action due 
to the connection to the McKay correspondence, and a \textbf{monomial subscheme} 
is a subscheme $\xi \subset \aff^2$ such that $I(\xi)$ is a finite colength 
monomial ideal; Note that in isotypic coordinates monomial subschemes are always 
$G$-equivariant. 

As a final remark before continuing on, in 
contrast to existing literature where $(n;1,-1)$ actions are referred to as
antidiagonal actions, it is our contention it is more fitting to call them
diagonal actions. This is due to the fact that many of the important 
deductions on the combinatorics of diagrams colored with respect to 
$A_{n-1}$ actions follow from the fact that colors are preserved along the 
diagonals of the diagram.

\subsection{Colored Young Diagrams}

Let $x,y$ be an isotypic basis of $\C[x,y]$. Let $M$ denote the set of 
monomials in $x,y$ and endow it with the deglex ordering. Form the bijection 
$f: M \rightarrow \N^2$ given by taking a monomial to the 
multi-index of its exponents. Next, for a monomial ideal $I$ define the 
subset $Y(I) \subset \N^2$ which consists of those monomials not contained in 
$I$. Under the above bijection $Y(I)$ will be regarded as a subset of $\N^2$ 
from which $Y$ is easily seen to be a Young diagram. As we will only consider 
diagrams of this form the qualifier of Young will be omitted.

Primarily diagrams will be used to keep track of the group action by means 
of colorings. 
\begin{definition}\label{def:CanonicalColoring}
  The canonical coloring of the action of $G$ is the function 
  \begin{equation*}
    \mathcal{C}_G: \N^n \rightarrow G
  \end{equation*}
  given by taking $x^I$ to the unique $g \in G$ such that 
  $$ \Span(x^I) \simeq \chi(g) \in \Irr(G)$$ 
\end{definition}
\begin{remark}
Giving the coloring in terms of the elements of $G$ is 
easier to state, but often it will be given in terms of actual colors if 
diagrams are used in a calculation. 
\end{remark}
For a given diagram the inclusion $Y \subset \N^2$ gives rise to a coloring 
compatible with the canonical coloring of the action. The pair 
$(Y, \mathcal{C}_G)$ will be called a colored diagram, 
and its color type is the sequence $(a_g)_{g \in G}$ where 
$a_g = \#(\CC_G^{-1}(g) \cap Y)$. The connection between colored 
diagrams and $G$-subschemes is straightforward to prove:
\begin{lemma}
  Let $\rho \in \Rep(G)$ and assume its isotypic decomposition is
  $\rho \simeq \sum_{g \in G} a_g \chi(g)$. Let $\xi \in \eex^{[\rho]}$
  be a monomial subscheme and $Y = Y(I(\xi))$ be colored via $\CC_G$. 
  Then the color type of $Y$ coincides with the coefficients of the isotypic 
  decomposition above. Moreso this defines a bijection between colored 
  diagrams and monomial subschemes.
\end{lemma}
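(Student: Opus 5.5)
The plan is to work entirely in isotypic coordinates $x,y$, where every monomial $x^iy^j$ spans a one-dimensional $G$-stable subspace. Write $\Span(x^iy^j)\simeq \chi(i g_x + j g_y)$, so that $\CC_G(i,j)=\deg_G(x^iy^j)$ is exactly the $G$-grading introduced in the comparison with multigraded Hilbert schemes.

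First I would establish the bijection. A finite colength monomial ideal $I\subset\C[x,y]$ is automatically $G$-stable, since each monomial spans a $G$-stable line. Hence $\xi=V(I)$ is a $G$-subscheme and defines a zero dimensional substack. Conversely, the standard correspondence sends $I\mapsto Y(I)$, the set of exponents of monomials not in $I$. This set is closed under going down and left because $I$ is an ideal, so it is a Young diagram, and it is finite because $I$ has finite colength. Inversely, a finite diagram $Y$ determines the ideal $I_Y$ spanned by monomials whose exponents are not in $Y$; this is an ideal precisely because the complement of $Y$ is closed under adding $(1,0)$ and $(0,1)$. These two maps are mutually inverse. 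The coloring adds no data, since it is determined by $\CC_G$. Therefore colored diagrams correspond bijectively to monomial subschemes, and each such subscheme lies in $\eex^{[\varrho(\xi)]}$.

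Second, I would compare color type with representation type. The images of the monomials $\{x^iy^j : (i,j)\in Y\}$ form a $\C$-basis of $H^0(\xi)=\C[x,y]/I$. The action of $G$ on $\C[x,y]$ descends to the quotient because $I$ is $G$-stable. Each basis line is $G$-stable with $\Span(x^iy^j)\simeq\chi(\CC_G(i,j))$, so
\[
H^0(\xi)\simeq \bigoplus_{(i,j)\in Y}\chi(\CC_G(i,j)) = \sum_{g\in G}\#(\CC_G^{-1}(g)\cap Y)\,\chi(g)
\]
as representations. Comparing with $\rho\simeq\sum_g a_g\chi(g)$, uniqueness of isotypic decompositions (equivalently $a_g=\innprod{\rho}{\chi_g}$) gives that the color type equals $(a_g)$.

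The only delicate point I expect is checking that the quotient map is $G$-equivariant and that $I$ is a $G$-ideal. This reduces to the fact that $g\cdot x^iy^j=\chi_{g_x}(g)^i\chi_{g_y}(g)^j x^iy^j$, which is immediate from diagonality. Everything else is bookkeeping.
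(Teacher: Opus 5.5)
Your proof is correct. The paper gives no argument for this lemma beyond calling it straightforward, and your verification is exactly the routine check intended: the standard correspondence between finite colength monomial ideals and finite diagrams, plus the fact that the images of the monomials in $Y$ form a basis of $\C[x,y]/I$ made of $G$-stable lines $\Span(x^iy^j)\simeq\chi(\CC_G(i,j))$. Combining your two parts also yields the restricted bijection between monomial points of $\eex^{[\rho]}$ and colored diagrams of color type $(a_g)$.
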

In terms of the bijection the diagram associated to $\xi$ will be denoted 
$Y(\xi)$ with no reference to the ideal. Next, we will define a distinguished 
class of subsets of Young diagrams called nets.
\begin{definition}\label{def:Nets}
    Let $Y \subset \N^n$ be a diagram and $e_1 = (1,0), e_2 = (0,1)$. For 
    $p \in Y$ the subset
    \begin{equation*}
      H(p) = \{p + n e_i \in Y \mid n \in \N\}
    \end{equation*}
    will be called the hook of $p$, and the subset
    \begin{equation*}
        N(p) = \{p + v \in Y \mid v \in \N^2\}
    \end{equation*}
    will be called its net. $p$ will be referred to as the eye of either its 
    hook or its net.
\end{definition}
Hooks of a point in a diagram are already familiar tools to 
combinatorialists. The purpose of extending the definition to also include 
everything ``up and to the right" of $p$ is due to the fact that nets 
correspond to monomial subideals of a given monomial ideal. In addition, by 
translating the eye of a net to $(0,0) \in \N^2$ we obtain a diagram.

\subsection{Border Strips}

Let $Y \subset \N^2$ be a diagram. 
\begin{definition}\label{def:Border}
  The border of $Y$ is the subset 
  \begin{equation*}
    B(Y) = \{p \in Y \mid p + (1,1) \notin Y\}
\end{equation*}
$d(Y) := \# B$ will be called the diameter of Y and the length of the border.
\end{definition} 
A connected subset of $B(Y)$ will be called a \textbf{border strip}. A binary 
sequence can be associated to a border strip in order to more precisely refer to 
$B$: 

Enumerate $B \subset \N^2$ as 
$\{(a_i, b_i)\}_{i=1}^{d(Y)}$ such that either 
$a_i > a_{i + 1}, b_i = b_{i + 1}$ or $a_i = a_{i + 1}, b_i > b_{i + 1}$. 
Such an enumeration is possible as every $p \in B$ satisfies 
$p + (1,1) \notin Y$. Set $S_0 = \emptyset$. Assuming 
$S_j = \{c_i\}_{i=1}^j$ has already been obtained, $S_{j+1}$ is then 
defined as $S_j \cup \{c_{j+1}\}$ where $c_j = 0$ if $a_j = a_{j+1}$, and 
$c_j = 1$ if $b_{j} = b_{j + 1}$. The \textbf{staircase sequence} of Y is
$S_{d(Y) - 1}$, and may be denoted as either $S(Y)$ or $S(B)$ depending on
which is being emphasized.

Conversely for each staircase sequence there is a canonical diagram which 
may be associated to it. To see this, enumerate $S_j = (c_i)_{i = 0}^{m - 1}$ 
and let $j = \sum_{i = 0}^{m - 1} c_i 2^i$.
\begin{lemma}
  For each $0 \leq j < 2^{n - 1}$ there exists a unique 
  Young diagram $Y_j$ such that $S(Y_j) = S_j$. 
  \begin{proof}
    Let $e = \#\{c_i = 0\}, f = \# \{c_i = 1\}$. Inductively define 
    $B_j \subset \N^2$ by $B_j = \{b_i\}$ with $b_0 = (0, f)$ and 
    $b_{i + 1} = b_i + (1,0)$ if $c_i = 0$ and $b_{i + 1} = b_i - (0,1)$ if 
    $c_i = 1$. $Y_j$ then consists of all points $p \in \N^2$ with 
    $p \leq b_i$ for some $b_i \in B_j$. It is readily shown $Y_j$ is a 
    diagram, $B(Y_j) = B_j$, and $S(Y_j) = S_j$.
  \end{proof}
\end{lemma}

\subsection{Excisable Subsets}

In this section $\eex = [\aff^2/G]$ is a fixed abelian planar stack
and $\CC$ is the canonical coloring of the action of $G$. Let 
$\xi \in \eex^{[\rho]}$ be a monomial subscheme, $Y = Y(\xi)$, and $B$ be the 
border of $Y$. To simplify the language later we will define the following. 
\begin{definition}
  $\GG_{\rho} \subset \eex^{[\rho]}$ is the subscheme whose closed 
  points are in a bijective correspondence with $\xi \in \eex^{[\rho]}$
  such that $\operatorname{Supp}(\xi) = 0$.
\end{definition}
Of course $\GG_{\rho}$ is the analogue of punctual Hilbert schemes;
The fact that it is a subscheme follows from expressing it as a 
closed subscheme of some Grassmannian, and hence it will be 
referred to as the $G$-equivariant Grassmannian of $\rho$. Next define
\begin{definition}\label{def:Disentanglement}
  Let $\eta \in \eex^{[\rho]}$ be supported at the origin, and 
  $\Xi \subset \aff^2 \times T$ be a $G$-flat family over some connected 
  and reduced scheme $T$. $\Xi$ is said to be a disentanglement if there 
  exists a nonempty open subset $U \subset T$ such that for all $t \in U$:
  (i) $\Xi_t = f_t \cup \zeta_t$ with $f_t \in \Delta^{[m\RR]}$ for some 
  $m > 0$ independent of $t$, and (ii) $\zeta_t \in \GG_{\rho - m \RR}$.
\end{definition}
Any qualifiers on $\Xi$ will be included in the declaration of the 
disentanglement. A monomial subscheme of a disentanglement will be 
called a \emph{realization} of $\Xi$. The integer $m$ will be called the 
\emph{gauge} of the disentanglement. The most frequently used class of 
disentanglements are those where $\zeta_t$ is constant as these have simple 
interpretations in terms of diagrams. 
\begin{definition}\label{def:SID}
  A disentanglement $\Xi$ will be called stable if there exists $\zeta$ such 
  that $\Xi_t = f_t \cup \zeta$ for all $t \in U$. $\zeta$ will be referred 
  to as the base of the disentanglement. 
\end{definition}

Let $\Xi \subset \aff^2 \times T$ be a disentanglement realized by $\xi$. 
The most important class of border strips for the following analysis are 
those which may be algebraically removed from $Y$:
\begin{definition}\label{def:ExcBorderStrip}
    $B$ is an excisable subset of $Y$ if there exists 
    a disentanglement $\Xi \subset \aff^2 \times T$ which is realized by 
    $\xi$, and $t \in T$ such that $\Xi_t = \zeta \cup f$ where 
    $f \subset \Delta$ and $\zeta$ is a monomial subscheme such that 
    $Y - B = Y(\zeta)$.
\end{definition}
If $B$ is a border strip it will simply be referred to as an excisable border 
strip. For example consider the $A_2$ action and the families of ideals 
  $$(x^3 - t,y), ~(x,y^3 - t), ~(x^2 - ty, y^2 - tx, xy - t^2), ~(x,y^3-t)$$ 
where $t \in \aff^1$. For each of them by taking the zero locus, the 
resulting family of subschemes is an SID of gauge 1. Pictorially they 
excise the following border strips
\begin{center}
  \begin{ytableau}
    *(green) \circ\\
    *(red) \circ\\
    *(white)\circ\\
  \end{ytableau}
  ~
  \begin{ytableau}
    \none\\
    *(red)\circ & *(white)\circ\\
    *(white) & *(green)\circ\\
  \end{ytableau}
  ~
  \begin{ytableau}
    \none\\
    *(red)\circ\\
    *(white)\circ & *(green)\circ\\
  \end{ytableau}
  ~
  \begin{ytableau}
    \none\\
    \none\\
    *(green)\circ & *(red)\circ & *(white)\circ\\
  \end{ytableau}
\end{center}
One clear obstruction to being able to deform away a border strip with a 
disentanglement is the failure to have the correct color type. As 
this is an important obstruction we will say $B$ is \emph{regularly colored}
if its color type is $(n,...,n)$ for some $n$. In fact the following 
argument will also prove that $n$ is equal to the gauge of the corresponding 
disentanglement.
\begin{lemma}
  If $B$ is an excisable border strip then $B$ is regularly colored.
  \begin{proof}
    Let $\Xi$ be the disentanglement as above such that 
    $\Xi_t = f \sqcup \zeta$ with $f \subset \Delta$. By Proposition 
    \ref{DeltaRepCase} it follows $\varrho(f) \in \N \cdot \RR$. As such 
    $\varrho(\Xi_t) \simeq \varrho(\zeta) + m\RR$ for some $m$ so that  
    $\varrho(\xi) \simeq \varrho(\zeta) + m\RR$ by
    Proposition \ref{GFlatLimit}. Thus $\varrho(\zeta) = \varrho(\xi) - m\RR$. 
    Finally as $Y(\zeta) = Y - B$, the claim that $B$ is regularly colored 
    follows by translating the above calculation into color types of the 
    corresponding diagrams. 
  \end{proof}
\end{lemma}
As such being regularly colored is a necessary condition, and in fact this is 
also sufficient. 
\begin{prop}\label{ExcisableSubsets}
  Let $Y$ be a diagram and $B \subset B(Y)$ be a regularly colored subset 
  such that $Y - B$ is a diagram. Then $B$ is excisable.
  \begin{proof}
    For simplicity we will assume the color type of $B$ is $(1,...,1)$.
    By assumption $Y - B$ is a diagram. Let $\xi$ be the 
    subscheme corresponding to $Y - B$ and $\eta$ the subscheme to $Y$. Let 
    $f = \Orb_G((1,1))$ and set $\Xi_1'(t) = \xi \cup (t \cdot f)$ for 
    $t \in \C^*$ using the standard torus action. It can 
    be verified $\Xi_1'$ is a $G$-flat family. Moreso as $t \rightarrow 0$ 
    the orbit limits to an attracting point, and hence a $G$-flat 
    limit exists. As such the family may be extended to 
    $\Xi_1 \subset \aff^2 \times \aff^1$ such that $\Xi_1(0) \in \GG_{\rho}$.
    
    As $G$-flat limits preserve representation type, necessarily $\Xi_1$ has 
    constant representation type which is isomorphic
    to $\rho = \varrho(\eta)$ so $\Xi'(0) \in \GG_{\rho}$. 
    By Propositions 3.10 and 3.12 of \cite{Maclagan_Smith_2010} we know
    $\GG_{\rho}$ is rationally connected by using the induced families from 
    positive and negative significant arrows. As such there exists a 
    family $\Xi_2 \subset \GG_{\rho} \times C$ for some connected curve 
    curve $C$ which deforms $\Xi_1(0)$ to some $\eta = \Xi_2(t_0)$ so the 
    concatenation $\Xi = \Xi_1 \cup \Xi_2$ is a disentanglement. As 
    $\Xi(t_0) = \eta$ and $\Xi(1) = \xi \cup t \cdot f$ the 
    condition for $B$ to be excisable has been met. 
  \end{proof}
\end{prop}

\subsection{Border Strips of $A_n$ Actions}

Unfortunately Proposition \ref{ExcisableSubsets} does not give any 
explicit information on which subsets are excisable for a given coloring; In 
general it is difficult to give a satsifactory answer to this. For example, 
consider the $(5; 1, 3)$ action. By examination, of the 16 staircase 
sequences of length 4 the only corresponding border strips which are regularly 
colored are
\begin{center}
  \begin{ytableau}
    *(red)\\
    *(Fuchsia)\\
    *(green)\\
    *(yellow)\\
    *(white)\\
  \end{ytableau}
  \begin{ytableau}
    \none\\
    \none\\
    *(green)\\
    *(yellow) & *(Fuchsia) & *(white)\\
    \none & \none & *(red)\\
  \end{ytableau}
  \begin{ytableau}
    \none\\
    \none\\
    \none\\
    \none\\
    *(white) & *(green) & *(red) & *(yellow) & *(Fuchsia)\\
  \end{ytableau}
\end{center} 
Due to these general complications we will restrict our attention to 
excisable border strips of $A_n$ actions. In what follows $\CC_n$ denotes 
the canonical coloring of the $A_{n - 1}$ action, and define an operation on 
binary sequences $S = (c_i)_{i = 1}^m$ called \textbf{flips}.
\begin{definition}
  For each $j$ and $S$ define the sequence $(c_i^*)$ by $c_i^* = c_i$ if 
  $i \neq j$ and $c_i^* = 1$ if $i = j$. Then $\ff_j^+(S)$ is given by $S$ 
  if $c_j = 1$ and $\ff_j^{+}(S) = (c_i^*)$ otherwise. 
\end{definition}
For a binary sequence $S$ let $\ff_j^{+}(Y)$ be the diagram associated to 
$\ff_j^{+}(S)$ and similarly define $\ff_j^{+}(B_i)$. 
\begin{prop}\label{RegColoring}
    Fix $n \in \N$, a binary sequence $S_i$, let 
    $Y_i$ be the minimal diagram of $S_i$, and set $B_i = B(Y_i)$. Then 
    $B_i$ is regularly colored with respect to $\CC_n$.
    \begin{proof}
        The argument will proceed on induction with respect to the 
        number of ones in $S_i$. If $S_i = (0,...,0)$ then 
        $Y_i = \{(i,0) \mid 0 \leq i \leq n - 1\}$ from which the claim 
        is easily verified. Next choose $i \neq 2^{n - 1} - 1$, 
        form $Y_i$ and choose $j$ such that $c_j$ is the last entry of 
        $S_i$ which is zero. Consider the sequence $\ff_j^{+}(S_i)$. 
        By giving $B_i = \{p_k\}$ a linear order as above we can define 
        $B_i^* = \{p_k^*\} \subset \N^2$ by setting $p_k^* = p_k$ if 
        $k < j$, and $p_k^* = p_k - (1,1)$ if $k \geq j$. 
        After changing $0$ to $1$ in the sequence it may be the case
        $p_i^*$ has negative coordinates. 
        
        To circumvent this recall that the 
        coloring is a semigroup homomorphism $\CC_n: \N^2 \rightarrow \mu_n$
        and thus may be extended to a homomorphism 
        $\CC_n: \Z^2 \rightarrow \mu_n$. Note the extension also 
        satisfies $\CC_n(i,j) = \CC_n(i + N, j + N)$ for all $N \in \Z$. As 
        such $\CC_n(p_k^*) = \CC_n(p_k)$ for all $k$ so $B_i$ and $B_i^*$ 
        have the same color type. On the other hand by forming 
        $\ff_j^{+}(B_i)$ from $\ff_j^{+}(S_i)$ it is not hard to see 
        $\ff_j^{+}(B_i) = B_i^* + v$ for some $v \in \Z^2$. As the effect of 
        translations on the color type is to permute the coordinates 
        necessarily $B_{i}^*$ is regularly colored iff $\ff_j^{+}(B_i)$ is 
        regularly colored. Finally since $\ff_j^{+}(S_i)$ 
        has less zeroes than $S_i$ by our inductive hypothesis 
        $\ff_j^{+}(B_i)$ is regularly colored, whence $B_i$ is as well.
    \end{proof}
\end{prop}

\section{Cores of a Group Action}

From experience, the notion of core is crucial to the 
geometry of Gilbert schemes and to the combinatorics of colored diagrams. In 
general core refers to a diagram which is minimal with respect to some 
property. Let $\eex = [\aff^2/G]$ be an abelian planar stack with 
canonical coloring $\CC$. Let $Y$ be a diagram colored with respect to $\CC$. 
We will now recall some definitions and results in order to precisely
specify the properties under consideration. Let $M$ be a monomial ideal 
with minimal generators $\{x^{p_i} y^{q_i}\}_{i=0}^{n}$ written such that 
$p_0,q_n = 0$, $p_i > p_{i + 1}$, and $q_i < q_{i+1}$. Let $Y = \N^2 - M$ be 
its associated diagram.

For the convenience of the reader we will include the following definitions,
the first of which may be found in section 3 of \cite{Maclagan_Smith_2010}. 

$\bullet$
An arrow of $M$ is a 
pair $(i;u,v)$ such that $\CC(p_i,q_i) = \CC(u,v)$ and $(u,v) \in Y$. 
$(i;u,v)$ is positive if $u > p_i$, nonnegative if $u \geq p_i$, and 
non-positive if $v \geq q_i$; Moving forward $(i;u,v) > 0$ will indicate an 
arrow is positive and we will similarly use inequalities for the other two 
classes. If $(i;u,v) \geq 0$ and $i > 0$ then it is significant if 
$(u + p_{i - 1} - p_i, v) \in M$. If $(i;u,v) \leq 0$ and $i < n$ then 
it is significant if $(u,v + q_{i + 1} - q_i) \in M$. Finally 
$T(M)$ denotes the collection of significant arrows of $M$.

$\bullet$
Let $\xi = \mathbb{V}(M) \subset \aff^2$, set $\rho = \varrho(\xi)$
and form the isotypic decomposition $\rho = \sum_{g \in G} a_g \chi(g)$.
Recall $(a_g)$ is the color type of $Y$ so it is meaningful to define the
function on diagrams $t(Y) = \dim(\eex^{[\rho]})$. It was already shown the 
right hand side depends only on $\rho$ and hence the left hand side depends 
only on the color type of $Y$.

$\bullet$
Recall the hook length of $p \in Y$ is $h(p) = \#H(p)$.
In addition for $t \in \N$, we will say $Y$ is a $t$-core if $t \nmid h(p)$ 
for all $p \in Y$. 

With these definitions in mind, the 4 classes of cores are:
\begin{definition}\label{def:CoreTypes}
  Y is...
  \begin{itemize}
    \item an $n$-core if the associated partition is an $n$-core partition
    \item a B-core if Y contains no excisable border strips 
    \item a G-core if $t(Y) = 0$
    \item an MS-core if $T(M) = \emptyset$
  \end{itemize}
\end{definition}
The different classes of cores have the following relationships:
\begin{prop}
  Let $n = \# G$. There is a diagram of implications:
\[\begin{tikzcd}
	n & B & G & MS
	\arrow[from=1-1, to=1-2]
	\arrow[from=1-3, to=1-2]
	\arrow[tail reversed, from=1-3, to=1-4]
\end{tikzcd}\]
\begin{proof}
  \underline{G $\rightarrow$ B}: Assume $Y$ is not a $B$-core so there exists a 
  disentanglement realized by $\xi$. Let $\zeta \sqcup f$ be a fiber 
  of the disentanglement such that $\zeta$ is a monomial subscheme and 
  $f \subset \Delta$. By constancy of dimension it follows 
  $t_{\eex}(\xi) = t_{\eex}(\zeta \sqcup f) = t_{\eex}(\zeta) + 2$.
  As such $t(Y) = t_{\eex}(\xi) > 0$ so $Y$ is not a $G$-core.

  \underline{n $\rightarrow$ B}: If $Y$ is not a $B$-core then there exists 
  an excisable border strip. As such we can find $p \in Y$ 
  such that the border of $N(p)$ is regularly colored, and in particular 
  the length of the border is equal to $n$. Note the length of the border is 
  also equal to $h(p)$ as it is the border of a net. Thus $Y$ is not an 
  $n$-core.

  \underline{G $\leftrightarrow$ MS}: By Theorem 4.9 of 
  \cite{Maclagan_Smith_2010} we have $\# T(M)$ is equal to the 
  dimension of $\eex^{[\rho]}$. By its definition this implies 
  $t(Y) = \# T(M)$.
\end{proof}
\end{prop}
In general these implications are sharp. To see this fix the $(5;1,3)$
action, choose $\rho \simeq \RR$, and consider the colored diagrams 
\begin{center}
  $Y_1 = $
  \begin{ytableau}
    *(green)\\
    *(red) & *(blue)\\
    *(white) & *(green)\\
  \end{ytableau}
  ~
  $Y_2 = $
  \begin{ytableau}
    \none\\
    \none\\
    *(white) & *(green) & *(yellow) & *(red) & *(blue)\\
  \end{ytableau}
\end{center}
Let $\xi_i$ denote the subscheme corresponding to $Y_i$. 
It can be verified $t(Y_2) = 2$ by deforming the 
subscheme away from the origin so in turn $t(Y_1) = 2$. Hence $Y_1$ is not a 
$G$-core. However it is a $B$-core as the border length is 4. For another 
implication, use the same coloring and consider the diagram
\begin{center}
  \begin{ytableau}
    *(red)\\
    *(white) & *(green) & *(yellow) & *(red)\\
  \end{ytableau}
\end{center}
Clearly it is equal to its own border and the hook length of the white square 
is 5. As it is not regularly colored this proves $B \neq n$.
However more can be said for the special case of $A_{n-1}$ actions.
\begin{prop}\label{CoreEq_nCore}
  $B \rightarrow n$ for the $A_{n-1}$ action.
  \begin{proof}
    Every length $n$ border strip for the $A_{n - 1}$ action is excisable
    by Propositions \ref{ExcisableSubsets} and \ref{RegColoring}. As 
    such the claim follows directly from the discussion in examples c and d 
    on pages 12-13 of \cite{Macdonald_2023}.
  \end{proof}
\end{prop}
Now the diagram of implications takes the form:
\[\begin{tikzcd}
	n & B & G & MS
	\arrow[tail reversed, from=1-1, to=1-2]
	\arrow[from=1-3, to=1-2]
	\arrow[tail reversed, from=1-3, to=1-4]
\end{tikzcd}\]
The main technical result for this section is:
\begin{theorem}\label{AllCoreEq}
  $n \rightarrow MS$ for the $A_{n-1}$ action.
\end{theorem}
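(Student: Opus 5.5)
To prove $n \rightarrow MS$ for the $A_{n-1}$ action, the plan is to use the equivalence $G \leftrightarrow MS$ already established. It then suffices to show that if $Y$ is an $n$-core then $t(Y) = 0$, i.e. $\dim(\eex^{[\rho]}) = 0$ where $\rho$ is the representation attached to the color type of $Y$. By Proposition \ref{ConsTanDim} this amounts to the numerical identity
\begin{equation*}
  a_0 + a_{-c} = Q_{\eex}(\{a_i\}).
\end{equation*}
For the $(n;1,-1)$ action we have $c = 0$, $g_x = 1$, $g_y = -1$. By Theorem \ref{EulerFormCoeffs}, and assuming $n > 2$ (the diagonal skew family), $b_{i,i\pm 1} = 1$ and $b_{i,i+2}$ vanishes. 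This gives
\begin{equation*}
  2t(Y) = 4a_0 - 2\sum_{i=0}^{n-1}(a_i - a_{i+1})^2,
\end{equation*}
with indices mod $n$. The small cases $n \le 2$ will be checked separately from the finite class tables.

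The key step is to compute the color type of an $n$-core directly and verify $2a_0 = \sum_i (a_i - a_{i+1})^2$. I would use the standard abacus / $n$-quotient description. An $n$-core is determined by a vector $(x_0,\dots,x_{n-1}) \in \Z^n$ with $\sum x_i = 0$, with charges on the runners. The number of cells of content $\equiv i \pmod n$ is a known quadratic function of $x$; this is the Garvan–Kim–Stanton bijection, in which $|Y| = \tfrac{n}{2}\sum x_i^2 + \sum i x_i$.

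Concretely, I would induct on adding or removing a single $n$-core step, using the Weyl group action of the affine symmetric group. The reflection $s_i$ acts on $n$-cores by adding or removing all addable or removable cells of residue $i$. On color types this changes $a_i$ to $a_{i-1} + a_{i+1} - a_i$ for $i \neq 0$, with an extra $+1$ at $i = 0$. That is exactly the reflection preserving the affine form $2a_0 - \sum(a_i - a_{i+1})^2$, because a direct computation shows this expression is invariant under such a step. Since every $n$-core is reached from the empty diagram (where the expression is $0$) by such reflections, the identity follows.

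The main obstacle is justifying the reflection rule. One must show that for an $n$-core, the residue-$i$ addable and removable cells cannot coexist. Then the count of addable cells minus removable cells of residue $i$ equals $a_{i-1} + a_{i+1} - 2a_i + \delta_{i,0}$, which is a standard local counting along the boundary path. I would first prove this boundary count for arbitrary diagrams by examining the staircase sequence, reading residues along the border as in Proposition \ref{RegColoring}. Then I would use the $n$-core property (no removable rim $n$-hooks, equivalently the abacus has no gaps) to get the "all add or all remove" dichotomy. Finally, combining $2t(Y) = 0$ with $t(Y) = \#T(M)$ shows $T(M) = \emptyset$.
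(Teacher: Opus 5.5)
Your proposal is correct, but it takes a genuinely different route from the paper. The paper never uses the dimension formula in this proof. It argues directly on arrows: it inducts on the number of minimal generators of $M$, pushes a nonnegative significant arrow to the border (Lemma \ref{BorderPush}), and uses the fact that shafts have length divisible by $n$ (Lemma \ref{ShaftLength}) to produce an $n$-hook. Nonpositive arrows are then handled by transposing.

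You instead pass through $t(Y)=\#T(M)$ (the $G\leftrightarrow MS$ step, i.e.\ Theorem 4.9 of \cite{Maclagan_Smith_2010}) and the Euler form of $\DD_n$. This reduces the statement to the identity $2a_0=\sum_i(a_i-a_{i+1})^2$ for color types of $n$-cores. You prove that identity by recognizing $Q_{\DD_n}$ as the quadratic form of the affine type $A_{n-1}$ Cartan matrix, and $2a_0-Q_{\DD_n}$ as invariant under the reflections. The $n$-cores are exactly the orbit of the empty diagram under the residue-$i$ add/remove operators.

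Your computations check out. The invariance computation works verbatim for $n=2$, where the off-diagonal Cartan entries are $-2$. The case $n=1$ is trivial, since only the empty diagram is a $1$-core.

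One ingredient should be stated explicitly alongside the add/remove dichotomy: applying $s_i$ to an $n$-core yields an $n$-core. On the abacus this interchanges two adjacent runners and so preserves the no-gap condition. Your induction from the empty diagram needs this fact.

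As for tradeoffs, the paper's argument is elementary. It gives a direct combinatorial mechanism (a significant arrow forces an $n$-hook) that is independent of the To\"en--GRR computation of $Q_{\eex}$. Yours relies on that computation, on Maclagan--Smith, and on standard abacus facts. In exchange it treats positive and negative arrows simultaneously. It also yields the equational characterization of $n$-cores directly rather than as a corollary of Theorem \ref{AllCoreEq}. Combined with $a_i(Y)=a_i(\text{core})+w$, it further gives the formula $t_n(Y)=2w$ of Proposition \ref{NumBorderStripFormula}.
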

Once this is proven, the above set of implications has a 
closed cycle and hence all four definitions of cores are logically 
equivalent! The proof of this implication and consequences thereof will 
constitute the remainder of this paper. Before doing so, some additional 
remarks are in order. 

Fix a choice of monomial ideal $M$ and let $\{x^{p_i} y^{q_i}\}_{i = 0}^m$ 
be a minimal set of generators with the same assumptions as above. Some 
additional shorthand which will be employed is: (i) $(i;u,v) \geq 0$ denotes 
a nonnegative arrow, (ii) $T_i(M) \subset T_{\geq 0}(M)$ consists of all of 
nonnegative significant arrows based at $(p_i, q_i)$. 
Next we will say $S \subset Y$ is a \textbf{shaft} of $Y$ if 
$(p_i,q_i-1), (u,v) \in S$ and $S = \{(a_i, b_i)\}_{i = 1}^k$ where $S$ is 
determined by a staircase sequence $C = (c_j)_{j = 0}^k$ for some $k$, 
called denoted $\ell(S)$ and called its \emph{length}.

\subsection{Technical Lemmas}
\begin{lemma}\label{BorderPush}
    Suppose $(i;u,v) \in T_i(M)$ and $(u+1, v+1) \in Y$. Then 
    $(i; u+1, v+1) \in T_i(M)$.
    \begin{proof}
        Note $\CC_n(u+1, v+1) = \CC_n(u,v) = \CC_{n}(p_i, q_i)$
        and hence $(i; u+1, v+1)$ is an arrow of $M$. Next, 
        since $u < u + 1$ and $v < v + 1$ it follows 
        $0 \leq (i; u, v) \leq (i; u+1, v+1)$. Now assume 
        $(i;u,v) \in T_{\geq 0}(M)$. In particular $i > 0$ and 
        $(u + p_{i - 1} - p_i, v) \notin Y$. As $Y$ is a diagram clearly 
        $(u + 1 + p_{i - 1} - p_i, v + 1) \notin Y$.
        Hence $(i; u+ 1, v + 1) \in T_{i}(M)$ as claimed. 
    \end{proof}
\end{lemma}
On induction the lemma implies for $k$ maximal with respect to 
$(i; u + k, v + k) \in T_i(M)$ that $(u + k, v + k)$ is contained 
in the border of $Y$. To proceed we may define flips on shafts as 
was done for staircase sequences as both are given by binary sequences. 
Furthermore, a negative flip will satisfy $\ff_j^{-}(S) = S$ if 
$c_j = 0$ and $\ff_j^{-}(S) = (c_i^*)$ where 
$c_i^* = c_i$ if $i \neq j$ and $c_j^* = 0$.
As was shown before the color type of the head of the arrow 
is preserved since necessarily the head is equal to $(u,v)$ or 
$(u \pm 1, v \pm 1)$. As such $\ff_j^{\pm}(S)$ is the shaft of some arrow 
of $Y$ so long as the head is contained in $Y$. 
\begin{lemma}\label{ShaftLength}
    Let $w = (i; u,v)$ be a nonnegative arrow of $Y$. If $S$ is a shaft of 
    $w$ then $n \mid \ell(S)$.
    \begin{proof}
      Similar to the proof of Lemma \ref{RegColoring} we can apply a 
      seequence of positive and negative flips to $S$ to obtain an arrow 
      either of the form $(i; 0,v)$ or $(i;u,0)$ and shafts $S_q, S_p$ 
      defined below. In the first case there is a unique shaft of 
      the arrow $S_q = \{(p_i, q_i - i)\}$ and necessarily $v < q_i$. Since 
      $\CC_n(p_i, q_i) = \CC_n(u,v) = \CC_n(p_i, v)$ this implies 
      $q_i = v \modn$ so $q_i = v + kn$ for some $k \in \N$; By comparison it 
      is easy to see $\ell(S) = kn$. In the second case define 
      \begin{equation*}
          S_p = \{(0,q_i - j) \mid 1 \leq i \leq q_i\} \cup \{(j,0) \mid 1 \leq j \leq n\}
      \end{equation*}
      Along the first column the shaft passes through $(0,v)$ and thus it 
      suffices to show $n \mid u + v$ since this is equal to the number of 
      squares between both points. As 
      $\CC_n(u,0) = \CC_n(p_i,q_i) = \CC_n(0,v)$ necessarily 
      $u + 0 = 0 - v \modn$. Upon rearranging 
      $u + v = 0 \modn$, as $u, v \geq 0$ this implies 
      $u + v = k'n$. Thus $\ell(S_p) = kn + k'n$ so $n \mid \ell(S_p)$. 
      Finally as every shaft of $(i;u,v)$ has a length equal to either 
      $\ell(S_q)$ or $\ell(S_p)$ this implies $n \mid \ell(S)$ for all 
      $S$.      
    \end{proof}
\end{lemma}

\subsection{The Proof}

Now we will prove the theorem. Fix a choice of diagram $Y$ colored with 
respect to $\CC_n$. Note $Y$ is an $n$-core iff $Y^t$ is an $n$-core. 
As this swaps the roles of nonnegative and nonpositive arrows it suffices to 
show $T_{\geq 0}(M) = \emptyset$ for all $Y$. The overarching strategy for 
the remaining argument is to employ an inductive argument on the number of 
minimal generators of $M$.

\underline{\textbf{The Base Case}}:
Assume $T_{\geq 0}(M) \neq \emptyset$ and $m = 1$ so $M = (x^a, y^b)$. 
Pick a non-negative significant arrow $w$. As there are two generators 
of $M$ it follows from the definitions that $w = (1; u,v)$ for some 
$(u,v) \in Y$. By Lemma \ref{BorderPush} without loss of generality 
$(u,v) \in B(Y)$. Note $h(0,v)$ is equal to the length of the border strip 
starting at $(0,b-1)$ and terminating at $(a-1,v)$ since the diagram is a 
rectangle. Next, I claim $a + b < n$. Otherwise the length of the border is 
at least $n$, from which a regularly colored border strip is easily found. 

As the diagram is a rectangle every border strip is the border of some 
net, and hence $Y$ contains an excisable border strip. As $n=B$ it follows
$Y$ is not an $n$-core which contradicts the hypothesis. Next as 
$h(0,0) = a + b$ and every other hook is bounded by $h(0,0)$ we can see 
$h(0,v) < n$ for all $p \in Y$. Finally, set 
$$S = \{(0,b - i) \mid 1 \leq i \leq b - v\} \cup \{(j, v) \mid 1 \leq j \leq a - 1\}$$ 
Clearly this is a shaft of $(i; u, v)$ and by comparison it follows
$\ell(S) = h(0,v) \leq n$. In fact equality holds since $n \mid \ell(S)$
by Lemma \ref{ShaftLength}. Finally as $h(u,v) = \ell(S)$ this 
implies $Y$ cannot be an $n$-core so the base case is proven.

\underline{\textbf{The Inductive Step}}:
Assume the theorem is true for all diagrams whose monomial ideal
has less than $m$ elements in a minimal generating set. Let $Y$ be the 
diagram corresponding to $M = (\{x^{p_i} y^{q_i}\}_{i = 0}^m)$ and 
set $Y'$ to be the translation of $N(0,p_{m-1})$ by the vector 
$(0,-p_{m - 1})$. From the definition of nets necessarily $Y'$ is a diagram.
Let $M'$ be the associated monomial ideal and note its minimal generators are
\begin{equation*}
    M' = (x^{p_0 - p_{m - 1}}, x^{p_1 - p_{m - 1}} y^{q_1}, ..., x^{p_{m - 2} - p_{m - 1}} y^{q_{m - 2}}, y^{q_{m - 1}})
\end{equation*}
The hook lengths of $Y'$ are equal to the hook lengths of $Y$ for
the corresponding elements of the net. As such $Y'$ is an $n$-core since
$Y$ is an $n$-core by assumption. By the inductive hypothesis we have
$T_{\geq 0}(Y') = \emptyset$ as $m - 1 < m$.
\begin{prop}
    For $0 < i < m$, if $Y$ is an $n$-core then $T_i(M) = \emptyset$. 
    \begin{proof}
        The assumption $0 < i$ is not restrictive since by definition a 
        nonnegative significant arrow must satisfy $0 < i$.
        Assume there exists $(i; u,v) \in T_{\geq 0}(M)$ with $0 < i < m$ so 
        there is an arrow between $(p_i, q_i)$ and $(u,v)$. As 
        $(p_i - p_{m - 1}, q_{i})$ is a minimal square not contained in $Y$ 
        and $\CC_n(p_i, q_i) = \CC_{n}(u,v)$, necessarily 
        $\CC_n(p_i - p_{m - 1}, q_i) = \CC_n(u - p_{m - 1}, v)$ so 
        $(i; u - p_{m - 1}, v)$ is an arrow of $Y'$.   
        Furthermore the arrow is nonnegative as $u \geq p_i$ implies 
        $u - p_{m - 1} \geq p_i - p_{m - 1}$. It can be verified
        $x^{u + p_{i - 1} - p_i} y^v \in M$ implies 
        $x^{u - p_{m - 1} + p_{i - 1} - p_i} y^v \in M'$. As such
        $(i; u - p_{m -1}, v) \in T_{\geq 0}(M')$ 
        which contradicts $T_{\geq 0}(M')$. Hence 
        $(i;u,v)$ cannot exist so $T_i(M) = \emptyset$.
    \end{proof}
\end{prop}
From the Lemma and the decomposition $T_{\geq 0}(M) = \bigcup_i T_i(M)$
it follows $T_{\geq 0}(M) = T_m(M)$. Thus to prove the theorem
it suffices to show $T_m(M) = \emptyset$. 
\begin{prop}
    If $Y$ is an $n$-core then $T_m(M) = \emptyset$.
    \begin{proof}
        Note if $w \in T_m(M)$ then necessarily $w = (m; u,v)$ for some 
        $(u,v) \in Y$. By Lemma \ref{BorderPush} assume 
        $(u,v) \in B(Y)$ and define a shaft of $(m;u,v)$ by 
        \begin{equation*}
          S = \{(i, q_{m} - 1) \mid 0 \leq i \leq u\} \cup \{(u, q_{m} - i) \mid 1 \leq i \leq q_m - v\}
        \end{equation*}
        Let $k = \min\{j \in \N \mid x^{p_k} y^v \in M\}$. 
        If $p_k = u + 1$ then $S$ is a hook of $(0,v)$ so 
        $n \mid \ell(S)$ by Lemma \ref{ShaftLength} which implies $(0,v)$ is 
        an $n$-hook. This contradicts the assumption that $Y$ is an $n$-core
        so necessarily $p_{k} > u + 1$. As the arrow is significant and 
        nonnegative, it is the case $x^{u + p_{m-1}} y^v \in M$ so 
        $p_k \leq u + p_{m - 1}$. Hence $p_k - u \leq p_{m - 1}$. 
        
        Next let $S'$ be the translation of $S$
        by the vector $(p_k - u,0)$. If $(i, 0) \in S$ then 
        $i \leq q_{m}, p_k - u \leq p_{m - 1}$ so $(i,p_k - u) \in Y$ for all 
        such $i$. Similarly $(i + p_k - u, v) \in Y$ for all $i$ in the 
        horizontal leg. In total $S + (p_k - u, 0) \subset Y$. 
        Moreso this translates the shaft so the
        uppermost and rightmost legs are contained in the border. Hence 
        $S'$ is the hook of $(p_k - u, v)$. Translating preserves the length
        so $\# S' = \ell(S)$. By Lemma \ref{ShaftLength} in turn 
        $S'$ is an $n$-hook which contradicts the assumption $Y$ is an 
        $n$-core. Thus $(m;u,v)$ cannot exist so $T_{m}(M) = \emptyset$.
    \end{proof}
\end{prop}
Both propositions together imply $T_{\geq 0}(M) = \emptyset$. By applying the 
same argument to $Y^t$ it follows $T_{\leq 0}(M) = \emptyset$ that 
$Y$ is an MS-core. Thus we have completed the proof of Theorem \ref{AllCoreEq} 
$\square$

\section{Applications of the Equivalences}

The final section will consist of consequences of Theorem \ref{AllCoreEq}.
Recall the $\CC_n: \N^2 \to \mu_n$ coloring is defined by 
$\CC_n(i,j) = i - j \modn$.

\subsection{Combinatorial Applications}

We can now derive some combinatorial applications, the first of which is 
another reinterpretation of the dimension formula. If the color type of a 
diagram $Y$ with respect to $\CC_n$ is $(a_0,...,a_{n-1})$ then we will
define the function
$$t_n(Y) = 2a_0 - \sum_{i = 0}^{n-1}(a_i - a_{i + 1})^2$$
where $a_n = a_0$; If $\zz$ is the substack corresponding to $Y$ then 
clearly $t_n(Y)$ is the dimension of the corresponding Gilbert scheme 
which contains $\zz$.
\begin{prop}\label{NumBorderStripFormula}
  Let $Y \subset \N^2$ be a Young diagram colored by $\CC_n$. Then $t_n(Y)$
  is equal to twice the number of border strips which can be removed before 
  obtaining an $n$-core. 
  \begin{proof}
    If $Y$ is an $n$-core then by the above equivalences it is also a 
    $G$-core. As such $t_n(Y) = 0$ from which the claim is vacuously true. 
    Assume without loss of generality $Y$ is not a $G$-core so it is not a 
    $B$-core and hence there exists an excisable border strip $B \subset Y$. 
    Let $(a_0,...,a_{n-1})$ be the color type of $Y$. The color type of 
    $Y-B$ is $(a_0 - 1, ..., a_{n - 1} - 1)$ as $B$ is regularly colored. 
    By the substituting the color type of $Y - B$ into the formula for 
    $t_n$ and simplifying the resulting expression it follows 
    $t_n(Y - B) = t_n(Y) - 2$. Set $B_1 = B$. 
    
    If $t_n(Y - B_1) > 0$ the same argument may 
    be repeated to construct $B_2 \subset Y - B_1$. Repeat these steps
    until $t_n(Y - \bigcup_i^k B_i) = 0$. As $G = B$ upon reaching this 
    step no more excisable border strips remain. On induction it can be 
    calculated $t_n(Y - \bigcup_i^k B_i) = t_n(Y) - 2 k$ for all $k$. 
    Thus taking the largest possible index and setting the equations 
    equal to each other yields $t_n(Y) = 2k$.
  \end{proof}
\end{prop}
For the next application, let $R_n \subset \N^n$ denote the subset consisting 
of $(a_0,...,a_{n-1}) \in \N^n$ such that there exists a diagram 
whose color type is $(a_0,...,a_{n-1})$.
\begin{prop}
There exists a bijection
\begin{equation*}
  \{n\text{-core partitions}\} \leftrightarrow \{(a_0,...,a_{n - 1}) \in R_n \mid 2a_0 = \sum_i (a_i - a_i)^2\}
\end{equation*}
\begin{proof}
      Let $p_1 \geq ... \geq p_m$ be an $n$-core partition and $Y$ be the 
      associated Young diagram with color type $p = (a_0, ..., a_{n - 1})$.  
      Define a map to $\N^n$ by sending $Y$ to $p$. This is well defined as 
      each partition gives rise to a unique diagram. 
      As $n=G$ it follows $t_n(Y) = 0$ so that in particular
      \begin{equation*}
        2 a_0 = \sum_i (a_i - a_{i + 1})^2
      \end{equation*}
      As such the image of the above map is contained in $R_n$, and from its 
      definition the map is necessarily surjective. 
      To show injectivity we need to show $n$-core diagrams are uniquely 
      determined by their color types. 
      
      To see this, let 
      $\rho \simeq \sum_i a_i \chi(i)$. As $t_n(Y) = 0$ it follows 
      $\eex^{[\rho]}$ is zero dimensional. Since Gilbert schemes are 
      quasiprojective in turn $\eex^{[\rho]}$ is a finite collection of 
      points. By Proposition 3.12 of \cite{Maclagan_Smith_2010} they are 
      also connected and hence $\eex^{[\rho]}$ is a singleton. In particular 
      if $\xi$ is the monomial subscheme corresponding to $Y$ then 
      $\eex^{[\rho]} = \{\xi\}$ from which it is easily seen the map is 
      injective.
\end{proof}
\end{prop}
The bijection above also gives an equational characterization of $n$-cores
in terms of solutions contained in $R_n$. It is then natural to ask if the 
bijection can be extended to \textbf{all} solutions of the given equation; 
The difficulty is ensuring if $(a_0, ..., a_{n - 1})$ is a solution then it 
is contained in $R_n$. For instance, $(0,1)$ cannot be the color type of any 
diagram colored by $\CC_2$ so trivially it follows $R_n \neq \N^2$. However 
we can prove:
\begin{prop}
  2-cores are in a bijective correspondence with all solutions of 
  $2a_0 = 2(a_0 - a_1)^2$.
  \begin{proof}
    We will omit the calculation that solutions of the equation are given 
    by $(j^2, j^2 \pm j)$ for $j \in \N$. Now let $b \in \{0,1\}$ and 
    define $Y_{j,j - b}$ to be the diagram corresponding to the partition 
    $(2j - b) + ... + 1$. In all cases $j$ counts the number of antidiagonals 
    of white squares. It can be verified the $i$-th antidiagonal has $2i - 1$ 
    squares so in total there are $j^2$ white squares. If $b = 0$ then there 
    are $j$ antidiagonals of black squares, of which the $k$-th one has 
    $2k$ squares. Thus there are $2 + ... + 2j = j^2 + j$ black squares 
    in $Y_{j,j}$ whence its color type is $(j^2, j^2 + j)$. The case 
    $b = 1$ is virtually identical and will be omitted. In particular 
    $(j^2, j^2 \pm j) \in R_2$ for all $j \in \N$, which is what we wanted 
    to prove.
  \end{proof}
\end{prop}
As the task of extending the bijection beyond $n = 2$ will not be undertaken 
in this note we will conclude this section with a stronger conjecture to 
return to in the sequel:
\begin{conjecture}
  $(a_0,...,a_{n - 1}) \in R_n$ iff $2a_0 \geq \sum_{i} (a_i - a_{i + 1})^2$
  for all $n \in \N$.
\end{conjecture}
One direction is clear since if $(a_0,...,a_{n - 1})$ comes from a 
diagram then $t_n(Y)$ is the dimension of the tangent 
space of some Gilbert scheme and thus is nonnegative. For the converse 
direction the key point would be to prove that if $\rho \in \Rep(G)$ 
has a corresponding color type $(a_0,...,a_{n-1})$ which satisfies the 
above inequality then $\eex^{[\rho]} \neq \emptyset$.

\subsection{Applications to Gilbert Schemes}\label{TopGilbSchemes}

With equality of the cores established we can also give an alternative 
interpretation to the topology of $\DD_n^{[\rho]}$ via colored 
diagrams. To begin, an interpretation of core diagrams will be given.

\medskip 
For a given representation $\rho$, use $\rho_{\text{min}}$ to denote the 
minimal subrepresentation of $\rho$ such that 
$\rho_{\text{min}} = \rho - M\RR$ for $M = \dim(\DD_n^{[\rho]})$; 
$\rho_{\text{min}}$ will be called the minimal core of $\rho$. The 
analysis when $M = 0$ corresponds to core diagrams. More precisely 
we have:
\begin{lemma}
  M = 0 iff $\DD_n^{[\rho]}$ is a singleton. Furthermore, 
  its unique point is a monomial subscheme $\xi$ such that $Y(\xi)$ 
  is a core diagram.
  \begin{proof}
    The first claim is immediate since $M = \dim(\DD_n^{[\rho]})$ and 
    $\DD_n^{[\rho]}$ is connected by Theorem 1.1 of 
    \cite{Maclagan_Smith_2010}. For the second claim, choose 
    $\xi \in \DD_n^{[\rho]}$. As Grobner deformations induce $G$-flat 
    families it follows $\text{in}(\xi) \in \DD_n^{[\rho]}$ for any 
    admissible monomial order. However Grobner deformations are 
    parameterized by $\aff^1$ so as $\DD_n^{[\rho]}$ is a singleton we 
    have $\xi = \text{in}(\xi)$, i.e., $\xi$ is a monomial subscheme.
    Finally $Y = Y(\xi)$ is a core diagram as 
    $t_n(Y) = \dim(\eex^{[\rho]}) = 0$.
  \end{proof}
\end{lemma}
Now suppose $M > 0$, let $\xi \in \DD_n^{[\rho]}$ be a monomial subscheme, 
and set $Y = Y(\xi)$. As $Y$ is not a $G$-core there exists an excisable 
border strip. By Proposition \ref{ExcisableSubsets} a stable disentanglement  
may be constructed. Let $\zeta_1$ be the base of the disentanglement
and $Y_1 = Y(\zeta_1)$. Arguing as in the proof of Corollary 
\ref{NumBorderStripFormula} in fact for each $k \leq M$ there exists a 
stable disentanglement of gauge $k$ and associated base $\zeta_k$ with 
$Y_k = Y(\zeta_k)$. Reinterpreting the above set of steps, by a standard 
functor of points argument there exists a morphism 
\begin{equation*}
    \mathcal{U}_k: \GG_{\rho - k \RR} \times \Delta^{[k \RR]} \rightarrow \DD_n^{[\rho]}
\end{equation*}
which on closed points is given by 
$\mathcal{U}_k(\zeta, f) = \zeta \sqcup f$. 
Set $O_k = \text{im}(\mathcal{U}_k)$ and $C_k = \overline{O_k}$. 
Each $\GG_{\rho - k \RR}$ is connected by Propositions 3.10 and 3.12 of 
\cite{Maclagan_Smith_2010}, and $\Delta^{[k\RR]} \simeq \delta^{[k]}$ is 
connected by Proposition 2.3 of \cite{Fogarty_1968} so both $O_k$ and 
$C_k$ are connected for each $k$. Moreso as everything involved is 
quasiprojective over $\C$, $O_k$ is a constructible subset by Chevalleys 
theorem and we will endow it with the induced reduced subscheme structure.
As each substack of $\DD_n$ may be uniquely written as 
$\zz = \zz_0 \sqcup (\zz \cap \Delta)$
where $\text{Supp}(\zz_0) = 0$, it follows for all $\rho$ there is a 
decomposition 
\begin{equation*}
  \DD_n^{[\rho]} = \bigsqcup_{k = 0}^{M} O_k
\end{equation*}
Paired with the previous paragraph, for each $k$ we may decorate it with 
the set of diagrams of size $\deg(\rho - k \RR)$ and colored with 
$\CC_n$; Passing from $O_k$ to $C_k$ amounts to adding the 
border strips back to each $Y_k$. In total we have proven:
\begin{prop}
  Let $n \in \N$, $\rho \in \Rep(G)$, and 
  $\rho_{\operatorname{min}} = \rho - M \RR$ be the minimal core of $\rho$. 
  There is a partition $\DD_n^{[\rho]} = \bigsqcup_{k = 0}^M O_k$ such that 
  $O_M = \GG_{\rho_{\operatorname{min}}} \times \Delta^{[M\RR]}$. Furthermore
  the elements of the partition are connected smooth varieties and are in a 
  bijective correspondence with diagrams of size $\deg(\rho - k\RR)$ colored by 
  $\CC_n$.
\end{prop}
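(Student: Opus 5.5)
The plan is to assemble the decomposition $\DD_n^{[\rho]} = \bigsqcup_{k=0}^M O_k$ already built in the preceding discussion, then check the three asserted properties in turn: the identification of $O_M$, that each $O_k$ is a connected smooth variety, and the correspondence with colored diagrams.

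\textbf{Disjointness and covering.} Every substack $\zz$ of $\DD_n$ splits uniquely as $\zz = \zz_0 \sqcup (\zz\cap\Delta)$ with $\operatorname{Supp}(\zz_0)=0$. By Proposition \ref{DeltaRepCase}, $\varrho(\zz\cap\Delta)=k\RR$ for a unique $k$. Hence $\zz$ lies in exactly one $O_k$, namely the image of $\mathcal{U}_k$.

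\textbf{The range of $k$.} Since $\varrho(\zz_0)=\rho-k\RR$ must be the representation type of a monomial-degenerable subscheme, $k$ is bounded. I will bound it by $M$ as follows. Flatly degenerate $\zz_0$ (Gröbner degeneration) to a monomial $\xi_0$ with diagram $Y_0$. By Proposition \ref{NumBorderStripFormula}, $t_n(Y_0) = t_n(Y)-2k \ge 0$. Also $t_n(Y) = \dim \DD_n^{[\rho]} = $ twice the number of removable border strips, and the reduction to a core diagram in Corollary \ref{NumBorderStripFormula} removes exactly $M$ strips. So $k\le M$.

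\textbf{The identification of $O_M$.} For $k=M$, the map $\mathcal{U}_M$ is injective on points because the splitting is unique. I would argue it is an isomorphism onto its image, using the étale-local product structure: the supports at $0$ and in $\Delta$ are disjoint, so a deformation decomposes. This gives $O_M \simeq \GG_{\rho_{\min}}\times\Delta^{[M\RR]}$. By the Lemma just proven, $\GG_{\rho_{\min}}$ is a single reduced point, so $O_M\simeq \Delta^{[M\RR]}\simeq\delta^{[M]}$.

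\textbf{Connectedness and smoothness.} Connectedness of each $O_k$ was noted above, via Maclagan--Smith Propositions 3.10 and 3.12 and Fogarty. For smoothness I would again use the product decomposition, $O_k\simeq \GG_{\rho-k\RR}\times \Delta^{[k\RR]}$. The second factor is smooth since $\delta$ is a smooth surface. The first factor is the main obstacle, since punctual Hilbert schemes are generally singular. My first attempt is to interpret \lq\lq smooth variety" via the reduced induced structure on a locally closed stratum. I would then argue that $\GG_{\rho-k\RR}$ is a fibre of the $\C^*$-attracting-set (Białynicki-Birula) decomposition of the smooth scheme $\DD_n^{[\rho-k\RR]}$ toward $0$, which yields affine-space cells. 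If that fails, I would weaken the claim to irreducibility.

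\textbf{Correspondence with diagrams.} Finally, attach to each $O_k$ its monomial points, namely the torus-fixed points of $\GG_{\rho-k\RR}\times\{f\}$. These are exactly the $\CC_n$-colored diagrams of size $\deg(\rho-k\RR)$ with color type $\rho-k\RR$. This gives the claimed labelling of strata by diagrams. Passing to closures $C_k$ corresponds to re-adding excisable border strips, by Proposition \ref{ExcisableSubsets}.
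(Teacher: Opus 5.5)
\textbf{The gap is the smoothness step.} Your Bia\l{}ynicki-Birula argument does not give smoothness. $\GG_{\rho}$ is not a single cell; it is the union of the cells of \emph{all} the monomial ideals it contains. Already for $n=2$ one has $\GG_{\RR}\cong\p^1=\aff^1\sqcup\{\mathrm{pt}\}$. So you only get a paving by affine spaces, which says nothing about singularities.

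In fact no argument can work: the smoothness clause is false, and so is your fallback of irreducibility. Take $n=2$ and $\rho=2\RR$, so $\dim\DD_2^{[\rho]}=4$, $M=2$, and $O_0=\GG_{2\RR}$. Since $\mu_2$ acts by $-1$, set $\mathfrak m=(x,y)$.
\begin{itemize}
\item The set $A=\{W+\mathfrak m^3 : W \text{ a $2$-dimensional space of quadrics}\}\cong\p^2$ lies in $\GG_{2\RR}$. Indeed, the quotient is $\langle 1,q\rangle\oplus\langle x,y\rangle\simeq 2\chi_0+2\chi_1$.
\item So does the curvilinear locus $B$ of $GL_2$-translates of $(y-bx^3,x^4)$, $b\in\C$; an $x^2$-term is forbidden by parity. $B$ is $2$-dimensional, and it is disjoint from $A$, since its ideals contain a form with nonzero linear part.
\item For $b\neq 0$ one has $(y-bx^3,x^4)=(x^3-b^{-1}y,\,xy,\,y^2)$. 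Letting $b\to\infty$ gives $(x^3,xy,y^2)\in A\cap\overline{B}$.
\item $\GG_{2\RR}$ is a proper closed subset of the irreducible $3$-dimensional $\operatorname{Hilb}^4_0(\aff^2)$ (Brian\c{c}on); it misses $(y-x^2,x^4)$. Hence $A$ and $\overline B$ are two irreducible components.
\end{itemize}
So $O_0$ is connected but reducible, hence singular.

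The rest of your proposal follows the paper's own route: the unique splitting $\zz=\zz_0\sqcup(\zz\cap\Delta)$, $O_k=\im(\mathcal U_k)$, connectedness from Maclagan--Smith and Fogarty, and labelling each stratum by its monomial points. Your \'etale argument that $\mathcal U_k$ is an isomorphism onto $O_k$ is more than the paper supplies. The paper gives no argument for smoothness at all.

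What your approach actually proves is this: $O_k\cong\GG_{\rho-k\RR}^{\mathrm{red}}\times\delta^{[k]}$ is connected, and it is smooth exactly when its punctual factor is. Moreover $O_M\cong\delta^{[M]}$ is smooth, because $\GG_{\rho_{\min}}$ is a reduced point.

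State explicitly that you take $M=\tfrac12\dim\DD_n^{[\rho]}$, the number of removable strips. Your bound $k\le M$ and your use of the preceding Lemma on $\rho_{\min}$ both require this. The paper's text instead sets $M=\dim\DD_n^{[\rho]}$, for which $\rho-M\RR$ is generally not even a representation; for $n=2$ and $\rho=\RR$ it is $-\RR$.

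Finally, the ``bijective correspondence with diagrams'' can only be read as you read it: a labelling of strata by colored diagrams of color type $\rho-k\RR$, not a bijection.
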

In fact we can regard the complete collection of diagrams of a given size 
as being an equivalence class of diagrams up to algebraic deformation, i.e., 
a connected component of $\GG_{\rho - k \RR}$.


\printbibliography

\bigskip
University of Maryland
Department of Mathematics.
4176 Campus Drive,
William E. Kirwan Hall, College Park, Maryland 20742

\textit{Email Addresses}: \emph{sbachhub@umd.edu, amingh@umd.edu}

\end{document}